\documentclass[11pt,english,reqno]{amsart}
\usepackage[T1]{fontenc}
\usepackage[latin9]{inputenc}
\usepackage{enumitem}
\usepackage{geometry}
\geometry{verbose,tmargin=2.4cm,bmargin=2.5cm,lmargin=2.4cm,rmargin=3cm}
\setlength{\parskip}{3.5pt}
\usepackage{verbatim}
\usepackage{mathrsfs}
\usepackage{amstext}
\usepackage{amsthm}
\usepackage{amssymb}
\usepackage{hyperref}
\hypersetup{
     colorlinks   = true,
     citecolor    = blue,
     linkcolor    = blue
}
\usepackage{pdfsync}
\usepackage{verbatim}



\newcommand{\beq}{\begin{equation}}
\newcommand{\eeq}{\end{equation}}


\newcommand{\bx}{\mathbf{x}}
\newcommand{\by}{\mathbf{y}}
\newcommand{\bz}{\mathbf{z}}



\newcommand{\cac}{\mathcal C}

\newcommand{\cl}{\mathcal L}

\newcommand{\RR}{\mathbb{R}}

\newcommand{\al}{\alpha}

\newcommand{\ga}{\gamma}

\newcommand{\lp}{\left(}
\newcommand{\rp}{\right)}
\newcommand{\lc}{\left[}
\newcommand{\rc}{\right]}

\makeatletter
\numberwithin{equation}{section}
\numberwithin{figure}{section}
\theoremstyle{plain}
\newtheorem{thm}{\protect\theoremname}
\theoremstyle{definition}
\newtheorem{defn}[thm]{\protect\definitionname}
\theoremstyle{plain}
\newtheorem{prop}[thm]{\protect\propositionname}
\theoremstyle{remark}
\newtheorem{rem}[thm]{\protect\remarkname}
\theoremstyle{plain}
\newtheorem{lem}[thm]{\protect\lemmaname}
\theoremstyle{plain}

\makeatother

\usepackage{babel}
\providecommand{\corollaryname}{Corollary}
\providecommand{\definitionname}{Definition}
\providecommand{\lemmaname}{Lemma}
\providecommand{\propositionname}{Proposition}
\providecommand{\remarkname}{Remark}
\providecommand{\theoremname}{Theorem}

\begin{document}

\title[Volterra equations driven by rough signals]{Volterra equations driven by rough signals}

\author[F. Harang \and S. Tindel]{Fabian A. Harang \and Samy Tindel}
\begin{abstract}
This article is devoted to the extension of the theory of rough paths in the context of Volterra equations with possibly singular kernels. We begin to describe a class of two parameter functions defined on the simplex called Volterra paths. These paths are used to construct a so-called Volterra-signature, analogously to the signature used in Lyon's theory of rough paths. 
We  provide a detailed algebraic and analytic description of this object. Interestingly, the Volterra
signature does not have a multiplicative property similar to the classical
signature, and we introduce an integral product behaving like a convolution 
extending the classical tensor product.  We show that this convolution product is well defined for  a large class of Volterra paths, and we provide an analogue of the extension theorem from the theory of rough paths (which guarantees in particular the existence of a Volterra signature). Moreover  the concept of convolution product is essential in the construction of  Volterra controlled  paths, which is the natural class of processes to be integrated with respect to the driving noise in our situation. This leads to a rough integral given as a functional of the Volterra signature and the Volterra controlled paths, combined  through the convolution product. The rough integral is then used in the construction of solutions to Volterra equations driven by H\"older noises with singular kernels. An example concerning Brownian noises and a singular kernel is treated. 
\end{abstract}

\keywords{Rough Path Theory, Linear Volterra Integral Equations, Fractional
Differential Equations, Signature of Path. }

\thanks{\emph{MSC2010: 60H05, 60H20, 45D05, 34A12  }\\
\emph{ Acknowledgments}: F. Harang is supported by the Research Council of Norway (RCN). Project STORM, project number: 274410. S. Tindel is supported by the NSF grant  DMS-1613163.}

\address{Fabian A. Harang: Department of Mathematics, University of Oslo, P.O. box 1053, Blindern, 0316, OSLO, Norway}

\address{Samy Tindel: Department of Mathematics, Purdue University, 150 N. University Street, West Lafayette, IN 47907, United States}

\maketitle
{
\hypersetup{linkcolor=black}
 \tableofcontents 
}

\section{Introduction and main results}

Consider a Volterra equation of the second kind written as
\begin{equation}
u_{t}=f_{t}+\int_{0}^{t}k_{1}\left(t,r\right)b\left(u_{r}\right)dr+\int_{0}^{t}k_{2}\left(t,r\right)\sigma\left(u_{r}\right)dx_{r},
\label{eq:motiv: Volterra equations}
\end{equation}
 where $f$ is some initial condition, $b$ and $\sigma$ are sufficiently
smooth functions, $k_{1}$ and $k_{2}$ are possibly singular kernels,
and $x$ is an irregular signal (typically a (fractional) Brownian
motion). Integral equations on this form have several applications
to physics, biology or even finance. For example in physics, such
equations are used to model viscoelastic materials \cite{DietFred},
or in biology these equations may be used to model the spread of epidemics \cite{Crom}. Volterra equations also play a crucial role
in renewal theory \cite{Fell}, and is frequently used in stochastic volatility modelling where the driving noise is typically frequently chosen to be a rough fractional Brownian motion \cite{BFGMS19,ER19}.

\smallskip

From a mathematical point of view, Volterra equations have been studied for a long time. At a heuristic level,  in order to obtain existence and uniqueness of \eqref{eq:motiv: Volterra equations} one is typically confronted  with the regularity assumption of $b$ and $\sigma$, and the regularity of the initial data $f$ as well as the driving noise $x$. Additionally one needs some type of regularity on the kernels $k_1$ and $k_2$.  Although the conditions on $b,\sigma$ and $f$ ensuring existence and uniqueness in \eqref{eq:motiv: Volterra equations} are generally similar to the case of classical ODEs, the assumption on the noise $x$ and the kernels $k_1$ and $k_2$ are more challenging objects to analyse in this context. Typically, one searches for the most general conditions on $k_1,k_2$ and $x$ in order to still obtain existence and uniqueness for equation \eqref{eq:motiv: Volterra equations}. 

\smallskip

  The introduction of irregular controls in terms of a random or irregular path $x$, as illustrated in the second integral term in \eqref{eq:motiv: Volterra equations}, has been investigated in stochastic analysis for decades. Most of the early analysis in this field has been done under the assumption that $x$ is a semi-martingale (see e.g. \cite{BergMiz,Pr}), and high regularity of $k_2$ (i.e. non-singular cases). During the 1990's these equations received much attention from the perspective of white noise theory, see for example \cite{OksZha} for the case of linear equations with non-singular kernels $k_2$, and \cite{CoGeLeSo} for the case of linear equations with singular $k_2$.
\\
  
 A new direction in stochastic differential equations, called rough path theory, has been initiated in the late 1990's by Terry Lyons (see in particular \cite{LY}) .
   In contrast to white noise analysis, the theory of rough paths  gives a completely path-wise perspective on  differential equations driven by irregular signals. In fact Terry Lyons showed (see \cite{LyonsLevy}) that given an irregular H\"older continuous path $x$, the construction of a differential calculus with respect to $x$ relied mostly on the ability to define iterated integrals of $x$. In particular the solution to an ordinary differential equation of the form
   \begin{equation}\label{simple ODE ex}
   \dot{y}_t =\sigma(y_t)\dot{x}_t \qquad y_0=\xi,
   \end{equation}
is obtained as a continuous functional of the noise $x$, together with its iterated integrals and the initial data $\xi$. That is, if we let $\bx=(x,\bx^2,\ldots,\bx^n)$ for some $n\geq 1$ be the collection of the path $x$ together with its iterated integrals, then the solution $y$ can be viewed as $y_t=\mathcal{I}(\bx,\xi)_t$ where $\mathcal{I}$ is a Lipschitz continuous functional in both arguments.
Therefore the theory of rough paths not only opens up  the analysis of  stochastic differential equations to a vast new class of driving stochastic processes, but it also provides simple stability results with respect to that noise. 
  The cost of the improved analytical tractability of the solutions is that non-linear functions in the diffusion term (corresponding to  $\sigma$ in \eqref{simple ODE ex}) need to be better behaved than in It\^o's theory. Typically one requires coefficients which are at least $C^2$ and bounded in order to get existence and uniqueness of solutions for rough differential equations driven by Brownian motion. This is in contrast to the Lipschitz and linear growth assumptions well known from classical It\^{o} stochastic analysis. 
\smallskip  
  
In order to test the robustness of rough path theory, a natural endeavor has been to explore more general differential systems than the ordinary differential equation~\eqref{simple ODE ex}. One can think for example of delay equations \cite{NNT} and cases of stochastic PDEs \cite{DD,GT}, culminating in the theory of regularity structures \cite{Hairer}.
During the years 2009-2011,  A. Deya and the second author of this paper provided in \cite{DeyaTIndel1} and \cite{DeyaTIndel2} a rough path perspective on Volterra equations driven by irregular signals $x$. In particular they proved that existence and uniqueness hold whenever $k_2$ is sufficiently regular (i.e. non-singular) and the driving rough path is H\"older continuous with H\"older exponent greater than $1/3$.   Notice that in these papers, the authors also discussed the challenges of extending the theory of rough paths in order to include singular kernels in equation~\eqref{eq:motiv: Volterra equations}. 
This remained an open question until late 2018, when Pr\"omel and Trabs \cite{TrabsPromel} gave a para-controlled perspective on Volterra equations driven by irregular signals. Highly influenced by the theory of rough paths, the theory of para-controlled distributions developed by Gubinelli, Imkeller and Perkowski \cite{GubPerImk} gives a path wise perspective on SDEs and SPDEs through Paley-Littlewood para-controlled calculus, and Bony's para-product. 
Although the result of Pr\"omel and Trabs is very interesting in itself, it seems to be currently limited in the same way as for the theory of para-controlled calculus. Namely one has to assume that the regularity of the noise $x$, minus the order of the singularity of the kernel $k_2$ must be greater than $\frac{1}{3}$. Thus a full rough path "picture" in terms of Lyons' theory is not available at this time through the paracontrolled methodology. It should also be mentioned that paracontrolled distributions are mostly expressed through Fourier modes, which is usually not the natural way to handle nonlinear Volterra type equations.
\smallskip  

With the above preliminary considerations in mind, this article is devoted to a complete and comprehensive picture of the theory of rough paths in a Volterra setting with singular kernels. The main idea in order to achieve this goal is to extend the concept of a path $t\mapsto z_t$ to a two variable object $(t,\tau)\mapsto z_t ^\tau$ for $(t,\tau)\in \Delta_2$, where $\Delta_2$ is a simplex of two variables. This extension of the notion of path is motivated from the generic form of a Volterra integral 
\begin{equation}\label{Vp intro} 
z^\tau_t=\int_{0}^t k(\tau,r)dx_r,
\end{equation} 
for some (possibly singular) kernel $k$ and a H\"older continuous function $x$. Note that by considering the mapping $t\mapsto z^t_t$ we recover the classical well known Volterra integral. However, the main advantage with the splitting of the variables into one variable coming from the kernel and the other coming from the integration limit is the following: the regularity of the mapping $\tau\mapsto z^\tau_t$ is then completely determined by the regularity/singularity of the kernel $k$, while on the other hand the mapping $t\mapsto z^\tau_t$ is completely determined by the regularity/singularity of the driving noise. 
 While it is the composition of these regularities which yields the regularity of $t\mapsto z^t_t$, the separation of the two arguments allows us to give a framework for Volterra rough paths, similar to the classical rough path framework. More specifically, consider a two parameter $E$-valued path $z$ as defined in \eqref{Vp intro}.  Our main assumption will be the existence of a $n-$tuple of the form
 \begin{equation}\label{def of nstep sign}
\bz=(\bz^1,\bz^2,\ldots,\bz^n):\Delta_3 \rightarrow \bigotimes _{i=1}^n E^{\otimes i}
\end{equation} 
with $\bz^1=z$, and 
satisfying a modified Chen type relation 
\begin{equation}\label{che}
\bz_{ts}^{\tau}=\bz^{\tau}_{tu}\ast \bz^{\cdot}_{us}.
\end{equation}
Notice that in \eqref{def of nstep sign} the classical tensor product $\otimes$ used in rough path theory is replaced by a bilinear convolution operation  $\ast$. We will go back to this convolution product (which is one of our main ingredients) below. For the time being, let us just notice that it can be defined as a component-wise operation similarly to the classical tensor algebra, i.e.
\begin{equation}
\bz^{m,\tau}_{ts}=\sum_{i=0}^m \bz^{m-i,\tau}_{tu}\ast \bz^{i,\cdot}_{us}.
\end{equation}
With the Volterra structure for $(t,\tau)\mapsto z_t ^\tau$ and the proper definition of the convolution product $\ast$, we will argue that the solution to  a $V$-valued Volterra equation 
\begin{equation}\label{simple Vpeq}
y_t =\xi +\int_0^t k(t,r)\sigma(y_r)dx_r,\qquad \xi \in V
\end{equation} 
can be viewed as a continuous functional of the noise $\bz$ and the initial data $\xi \in V$. That is, the solution $y$ is given by $y=\mathcal{I}(\bz,\xi)$ where $\mathcal{I}$ is Lipschitz continuous in both arguments. It is worth noting that for $k\neq 1$, the element $\bz\in \bigotimes _{i=1}^n E^{\otimes i}$ given as in \eqref{def of nstep sign} is fundamentally different from the classical  iterated integrals in the theory of rough paths, both algebraically and analytically. 
\smallskip  

Let us go back to our first goal, namely the path-wise construction of the Volterra paths in \eqref{Vp intro} as well as the algebraic and analytical properties of the the associated Volterra-signature (as generalized from the concept of signatures in the theory of rough paths). We begin  to show that given an $\alpha-$H\"older continuous path $x$ and a singular kernel $k$ such that $|k(t,s)|\lesssim |t-s|^{-\gamma}$ and $\alpha-\gamma>0$, then the path $(t,\tau)\mapsto z^\tau_t$ is well defined and is contained in a space of two-variable Volterra-H\"older paths which will be specified later. Starting from this object, we will prove that the convolution product given in \eqref{che} is well defined for any two Volterra paths $z$ and $\tilde{z}$ built from Volterra kernels $k$ and $\tilde{k}$ and driving noise $x$ and $\tilde{x}$ respectively. In fact, intuitively one can think of this operation between $z$ and $\tilde{z}$ as 
\begin{equation}\label{eq:convol-intro}
z^{\tau}_{tu}\ast \tilde{z}^{\cdot}_{us}=\int_{u}^t dz^\tau_r \otimes \tilde{z}^r_{us},
\end{equation} 
where the increment $z^{\tau}_{tu}$ is defined by $z^{\tau}_{tu}=z^{\tau}_{t}-z^{\tau}_{u}$. In \eqref{eq:convol-intro}, note that the integration is done with respect to the upper parameter in $\tilde{z}$ (corresponding to a regularity coming from the kernel $\tilde{k}$) and the lower variable in $z$ (representing the regularity coming from the driving noise $x$).  
This operation will be extended to any two Volterra type objects in the $n-$tuple $\bz$, and leads naturally to the algebraic relation in \eqref{che}. Let us also mention at this stage that the H\"older type norm under consideration in this paper, taking into account both the regularity coming from the kernel $k$ and the noise $x$, will be given in the following way for the  component $\bz^i$ of $\bz$ (below we have $\al,\ga\in(0,1)$), 
\begin{equation}\label{eq:holder-z-i}
	|\bz_{ts}^{i,\tau}|\lesssim 
	|\tau-t|^{-\gamma}|t-s|^{\al},
\end{equation}
where we omit some of the other regularities to be considered for sake of clarity. As mentioned above, expression~\eqref{eq:holder-z-i} is thus separating a singularity of order $\ga$ on the diagonal $t=\tau$ from the $\al$-H\"older regularity in $t-s$. The object $\bz$ satisfying  \eqref{eq:convol-intro} and \eqref{eq:holder-z-i} is called a Volterra rough path.    In order to provide a full picture of the construction of these objects, we include in this article a generalization of the Sewing lemma \cite[Proposition 1]{Gubinelli}, as well as of the rough path extension theorem (see e.g.  \cite[Theorem 3.7]{LyonsLevy}) in the Volterra context.
\smallskip  

Once the construction of a Volterra rough path is secured, our second goal is concerned with the construction of solutions to \eqref{simple Vpeq}. To this end, we will extend the theory of controlled rough paths, as described by Gubinelli in \cite{Gubinelli}, to the Volterra-rough path setting. Observe that this extension also relies upon the convolution product $\ast$ introduced in \eqref{che}. In particular, a Volterra path $(t,\tau)\mapsto y^{\tau}_t$ controlled by the Volterra noise $(t,\tau)\mapsto z_t^\tau$ given as in \eqref{Vp intro} satisfies
\begin{equation}\label{eq:dcp-controlled-process}
y_{ts}^{\tau}=z^\tau_{ts}\ast y_s^{\prime,\tau,\cdot}+R_{ts}^\tau,
\end{equation}
where we recall the notation $y_{ts}^{\tau}=y_{t}^{\tau}-y_{s}^{\tau}$, and where $R_{ts}^\tau$ is a sufficiently regular remainder term. Processes of the form \eqref{eq:dcp-controlled-process} are the ones which can be naturally integrated with respect to $x$ in the rough Volterra sense. Furthermore, once a rough integral is defined for a large enough class of processes and one can prove the stability of the structure \eqref{eq:dcp-controlled-process} under composition with a nonlinear mapping, equations like \eqref{simple Vpeq} are solved thanks to a standard fixed point argument.
\smallskip

Let us now say a few words about the regularity of $x$ and the singularity of $k$ on the diagonal in equation \eqref{simple Vpeq}. 
We believe that, provided they can be pushed to arbitrary orders, expansions like \eqref{eq:dcp-controlled-process} yield a proper notion of Volterra rough type integral as long as $k$ is a singular kernel of order $-\gamma$ and $x$ is a $\alpha$-H\"older continuous noise with $\alpha-\gamma>0$.
 However, for sake of conciseness, this article is restricted to the case $\alpha-\gamma>\frac{1}{3}$. In this situation one only needs to assume the existence of the second step Volterra iterated integral $\bz^2$, and the first order controlled path structure \eqref{eq:dcp-controlled-process} is enough for our purposes. In the forthcoming article \cite{HTW} we investigate this problem in more details, and extend the theory presented in the current article to the case of $\alpha-\gamma>\frac{1}{4}$.  Rougher situations and more singular kernels are deferred to a further publication. For the construction of a Volterra rough path, one should also be aware of the fact that the concept of geometric rough paths is not directly transferable to the Volterra setting. 
 Simply put, if $k$ is a singular kernel one cannot expect to have a satisfying integration by parts formula (at least not in a classical sense) when integrating against $k$. Therefore the Volterra rough path $\bz$ defined by \eqref{Vp intro} and \eqref{def of nstep sign} is in general not a continuous function of the classical rough path above $x$. We thus expect some of the algebraic considerations related to the Volterra case to be different from the classical rough path theory, possibly requiring the  regularity structures techniques of \cite{Hairer}. The construction of Volterra rough paths above standard stochastic processes is deferred to a subsequent publication. 
\smallskip


\smallskip

Below we give a brief outline of the sections in the paper. 
\begin{enumerate}[wide, labelwidth=!, labelindent=0pt, label=\textnormal{(\roman*)}]
\setlength\itemsep{.1in}
\item Section \ref{sec:Basic-concepts-from} provides the elementary tools of rough path theory and fractional calculus needed in order to develop our framework in the sequel. 

\item Section \ref{sec:Volterra-Signatures} gives an introduction to the concept of Volterra iterated integrals and Volterra signatures in the case of smooth driving noise $x$, possibly involving a singular kernel $k$. In this section we will encounter the convolution product $\ast$ for the first time and give a detailed description of this product. We will also provide a working hypothesis on the regularity of the kernel $k$ which will be used throughout the rest of the text.

\item In Section  \ref{sec:Volterra-Rough-Paths} we move to the case when the driving noise $x$ of a Volterra path is only $\alpha$-H\"older continuous with $\alpha\in (0,1)$. We  construct a generalized space of Volterra-H\"older paths, and give a pathwise construction of the Volterra process given by \eqref{Vp intro} sitting in this Volterra-H\"older space. Furthermore, we prove that the convolution product is well defined for any Volterra path. This results in the definition of a  convolutional path (obtained as an extension of Lyon's concept of multiplicative paths) and then the creation of the Volterra signature from such paths. Both algebraic and analytic aspects of these objects are discussed. 

\item Section \ref{non-linear setting} deals with the extension of the rough path theory to the Volterra equations case, through the introduction of the Volterra signature and the convolution product defined in Section \ref{sec:Volterra-Rough-Paths}. To this end we define a class of Volterra controlled paths, and prove that the Volterra integral and the operation of composition with regular functions are continuous operations on this class of functions. This is then used in Section \ref{sec:rough volt eq} to show existence and uniqueness of Volterra integral equations on the form of \eqref{simple Vpeq} with singular kernel $k$ and rough driving noise $x$.

\end{enumerate}

\section{Preliminary notions\label{sec:Basic-concepts-from}}

This section is devoted to some preliminary notations and notions of classical rough paths, which will help to understand our considerations in the Volterra case. We start with some general notation in Section \ref{subsec:Notation}, and recall some notions of rough paths analysis in Section~\ref{subsec:classic-rough-paths}.

\subsection{General notation}\label{subsec:Notation}

We will frequently use Banach spaces $E,V$ and $H$, and write
$|\cdot|=\|\cdot\|_{E}$ as long as this does not leave
any confusion. Throughout we will write $a\lesssim b$ meaning that
there exists a constant $C>0$ such that $a\leq Cb$. We will denote by $\Delta_{n}([a,b])$ the $n$-simplex over $[a,b]$ defined by 
\begin{equation}\label{simplex}
\Delta_{n}([a,b])=\{(x_{1},\ldots   ,x_{n})\in [a,b]^{n} |\,\, a \leq x_{1}<\cdots   <x_{n}\leq b\}, 
\end{equation}
 and when the set $[a,b]$ is clear from context we will just write $\Delta_{n}$. 
 
The kernels involved in equations like \eqref{eq:motiv: Volterra equations} are closely related to fractional integral operators. We will mostly use the  operator $I^{\alpha}:L^{1}\left(\left[0,T\right];E\right)\rightarrow L^{1}\left(\left[0,T\right];E\right)$, which is defined for a given $\al>0$ and $(u,t)\in\Delta_{2}$ as follows:
\[
I_{u+}^{\alpha}\left(f\right)\left(t\right):=
\frac{1}{\Gamma(\al)}
\int_{u}^{t}\left(t-r\right)^{\alpha-1}f\left(r\right)dr,
\]
where $\Gamma$ denotes the Gamma function.
Fractional integrals have been widely studied in the literature, and we refer to \cite{SKM} for a thorough account on the topic. However, we  mention  here a few properties of the operators $I^{\alpha}$  which will be frequently used. Most important is the convolution property;  for $\al,\beta>0$ and $(u,t)\in\Delta_{2}$:
\begin{equation}\label{convolution_fractional_integral}
I_{u+}^{\alpha}\left(I_{u+}^{\beta}\left(f\right)\right)\left(t\right)=I_{u+}^{\alpha+\beta}\left(f\right)\left(t\right).
\end{equation}

We will also use the following action of $I_{u+}^{\al}$  on elementary functions
\begin{equation}\label{RL_on_one}
I_{u+}^{\alpha}(1)(t)=\frac{(t-u)^{\al}}{\Gamma(\al +1)},\,\,\text{ and }\,\,I_{u+}^{\alpha}((\cdot -u)^{\beta})(t)=\frac{(t-u)^{\al+\beta}\Gamma(\beta+1)}{\Gamma(\al+\beta+1)}, 
\end{equation}
where $\alpha,\beta>0$.  
Throughout the article we will rely on partitions of intervals. A partition over an interval $[a,b]$ will be denoted by  $\mathcal{P}[a,b]$. If the interval $[a,b]$ is clear from the context we may write $\mathcal{P}$.  
Throughout this article we will work with increments of functions, which for $(s,t)\in \Delta_2$  will be denoted by 
\begin{equation}\label{eq:notation-increments}
f_{ts}=f_{t}-f_{s}.
\end{equation}
We ask the reader to note that the order of $t$ and $s$ in $f_{ts}$ is changed from the traditional notation used in rough path theory. This is to accommodate the algebraic side of the  Volterra specific setting we will encounter in later sections.  
For $\alpha\in (0,1)$, we will denote by $\mathcal{C}^{\alpha}\left(I;E\right)$ the space of H\"older continuous functions from an interval $I$  to a Banach space $E$. If $I$  is reduced to a singleton $\{t\}$ then 
\begin{equation}\label{singleton space}
\mathcal{C}^{\alpha}\left(\{t\};E\right):
=\left\{f:V_{t} \rightarrow E \, | \,\, \sup_{s\in V_{t}}
 \frac{|f_{ts}|}{|t-s|^{\alpha}}<\infty\right\},
\end{equation} 
where $V_{t}$ stands for a neighbourhood of $\{t\}$. 
Furthermore, we will frequently use an operator $\delta$ well known in the theory  rough paths, given by  
\begin{equation}
\delta_{u}f_{ts}=f_{ts}-f_{tu}-f_{us}.
\end{equation}

\subsection{Short introduction to rough path theory}\label{subsec:classic-rough-paths}

In this section we recall some basic notions about signatures of paths and related geometric structures, which will make the generalization to Volterra type objects more natural.

\subsubsection{Signatures}
One natural way to introduce signatures of paths is to see how they arise from expansions of linear differential equations. Namely assume first  the path $x:\left[0,T\right]\rightarrow E$ is smooth, where $E$ is a given Banach space.
Let $V$ be another Banach space and consider the $V$-valued ODE 
\begin{equation}
\dot{y}_{t}=A\left(\dot{x}_{t}\right)y_{t},\,\,\,\,\,y_{0}=\xi\in V,\label{eq:smooth equation}
\end{equation}
where $A$ is a linear operator, namely $A\in\mathcal{L}\left(E,\mathcal{L}\left(V\right)\right)$. Whenever $x$ is smooth, a Picard type iteration yields the following expansion:
\begin{equation}
y_{t}=\xi\left(1+\sum_{i=1}^{\infty}A^{\circ i}\left(\int_{\Delta_i([0,t])}dx_{r_1}\otimes\cdots   \otimes dx_{r_{i}}\right)\right),\label{eq: picard integral smooth}
\end{equation}
where $A^{\circ i}$is the $i$-th composition of the linear operator $A$  which is given as a linear operator on $E^{\otimes i}$ defined
from the action 
\[
A^{\circ i}\left(x_{1}\otimes\cdots   \otimes x_{i}\right):=A\left(x_{1}\right)\circ\cdots   \circ A\left(x_{i}\right).
\]
The expansion \eqref{eq: picard integral smooth} reveals that $y$ can be seen as a continuous function of the collection $\{\int_{\Delta_i([0,t])}dx_{r_{1}}\otimes\cdots   \otimes dx_{r_{i}}; \, i\ge 1\}$, which is called the signature of $x$.

In order to describe the algebraic structures behind the expansion \eqref{eq: picard integral smooth}, let us first give some definitions.
\begin{defn}\label{def:truncated-algebra}
Let $E$ be a real Banach space. For $l\in\mathbb{N}$, the truncated algebra $T^{(l)}$ is defined by $T^{(l)}=\bigoplus_{n=0}^{l}E^{\otimes n}$,
with the convention $E^{\otimes 0}=\mathbb{R}$. The set $T^{(l)}$ is equipped with a straightforward
vector space structure, plus an operation $\otimes$ defined by
\beq\label{eq:def-product-in-T-l}
\lc g\otimes h\rc^{n}=\sum_{k=0}^{l}g^{n-k}\otimes h^{k},\qquad g,h\in
T^{(l)},
\eeq
where $g^{n}$ designates the projection on the $n$-th tensor level for $n\le l$. 
\end{defn}

\noindent
Notice that $T^{(l)}$ should be denoted $T^{(l)}(E)$. We have dropped the dependence on $E$ for notational sake. Also observe that with Definition \ref{def:truncated-algebra} in hand,
$(T^{(l)},+,\otimes)$ is an associative algebra with unit element $\mathbf{1} \in E^{\otimes 0}$. The polynomial terms in the expansions which will be considered later on are contained in a subspace of $T^{(l)}$ that we proceed to define now.

\begin{defn}\label{def:free-algebra}
The \textit{free nilpotent Lie algebra}
 $\mathfrak{g}^{(l)}$ of order $l$ is defined to be the graded sum 
 \begin{equation*}
\mathfrak{g}^{(l)} \overset{\Delta}{=} \bigoplus_{k=1}^{l}{\cl}_{k}\subseteq T^{(l)} .
 \end{equation*}
 Here $\mathcal{L}_{k}$ is the space of homogeneous Lie polynomials of degree $k$ given inductively by $\mathcal{L}_{1} \overset{\Delta}{=} E $ and $\mathcal{L}_k \overset{\Delta}{=} [ E ,\mathcal{L}_{k-1}]$, where the Lie bracket is defined to be the commutator of the tensor product. \end{defn}

We now define some groups related to the algebras given in Definitions \ref{def:truncated-algebra} and \ref{def:free-algebra}. To this aim, introduce the subspace $T_0^{(l)}\subseteq T^{(l)}$ of tensors whose scalar component is zero and recall that $\mathbf{1}\overset{\Delta}{=}(1,0,\cdots   ,0)$. For $u\in T_0^{(l)}$, one can define the inverse $(1+u)^{-1}$, the exponential $\exp(u)$ and the logarithm $\log(1+u)$ in $T^{(l)}$ by using the standard Taylor expansion formula with respect to the tensor product. For instance, 
\beq\label{eq:def-exp-on-T-l}
\exp({u})\overset{\Delta}{=}\sum_{k=0}^{\infty}\frac{1}{k!} \, {u}^{\otimes k}\in T^{(l)},
\eeq
where the sum is indeed locally finite and hence well-defined. We can now introduce the following group.
\begin{defn}\label{def:free-group}
The \textit{free nilpotent Lie group} $G^{(l)}$ of order $l$ is defined by 
$$G^{(l)}\overset{\Delta}{=}\exp(\mathfrak{g}^{(l)})\subseteq T^{(l)}.
$$ 
The exponential function is a diffeomorphism under which $\mathfrak{g}^{(l)}$ in Definition \ref{def:free-algebra} is the Lie algebra of $G^{(l)}$.
\end{defn}

As mentioned above, the link between free groups and differential equations like \eqref{eq:smooth equation} is made through the notion of signature. Namely a continuous map
$\mathbf{x}:\Delta_{2}\rightarrow T^{(l)}$ is called a
multiplicative functional if for $s<u<t$ one has $\mathbf{x}_{ts}
=\mathbf{x}_{tu}\otimes\mathbf{x}_{us}$, where $\otimes$ is the operation introduced in Definition \ref{def:truncated-algebra}. A particular occurrence of this kind of map is given when one considers a smooth path $w$ and sets for $(s,t)\in\Delta_{2}$,
\begin{equation}\label{eq:def-iterated-intg}
\mathbf{w}_{ts}^{n}=
 \int_{t>r_{n}>\cdots >r_{1}>s}dw_{r_{n}}\otimes\cdots   \otimes dw_{r_{1}} .
\end{equation}
Then the so-called \textit{signature} of $w$ is the following object:
\begin{equation}\label{eq:signature-smooth-x}
S_{l}(w): \Delta_2([0,1]) \rightarrow T^{(l)} ,
\qquad
(s,t)\mapsto
S_{l}(w)_{ts}:=1+\sum_{n=1}^{l}\mathbf{w}_{ts}^{n}.
\end{equation}
It is worth mentioning that $S_{l}(w)$ will be our typical example of multiplicative functional. In addition, signatures of paths belong to the group $G^{(l)}$ introduced in Definition~\ref{def:free-group} and in fact any element in $G^{(l)}$ can be written as the signature of a smooth path.

Another important property in the theory of signatures, originally
proved by Chen~\cite{Chen}, relates the multiplicative property
to the signature of the concatenation of two paths. That is, if $x:\left[0,s\right]\rightarrow E$
and $y:\left[s,t\right]\rightarrow E$ we can define their concatenation
$x\star y:\left[0,t\right]\rightarrow E$ by the mapping 
\begin{equation}\label{eq:def-concatenation}
[x\star y]_{r}
=
\begin{cases}
\begin{array}{cc}
x_{r} & r\in\left[0,s\right]\\
x_{s}+y_{rs} & r\in\left[s,t\right]
\end{array} & .\end{cases}
\end{equation}
Then if $S_{l}$ is the truncated signature of a path as described in \eqref{eq:signature-smooth-x}, we get the following relation, whose proof can be found e.g. in \cite[Theorem 2.9]{LyonsLevy}:
\begin{equation}\label{eq:concatenation-and-signature}
S_{l}\left(x\star y\right)=S_{l}\left(y\right)\otimes S_{l}\left(x\right).
\end{equation}

One can now go back to the the expansion \eqref{eq: picard integral smooth}, and realize that it can be expressed in terms of the signature of the path $x$. Whenever $x$ is smooth, the terms $\bx^{n}$ exhibit a factorial decay, which kill the possibly exponential growth from $A^{\otimes n}$. This fact is not obvious anymore in case of an irregular path $x$, which motivates the notion of rough path introduced below.

\subsubsection{Rough path lift of a H\"older path}

Let us now assume that the path $x$ driving \eqref{eq:smooth equation} is only $\al$-H\"older continuous with $\al\in(0,1)$. Then the iterated integrals appearing in the expansion in equation
\eqref{eq: picard integral smooth} are possibly not well defined. In
particular when the continuity of the driving signal is of order $\alpha\leq\frac{1}{2}$,
there is no canonical way of constructing such integrals. The seminal idea put forward by T. Lyons is that one can construct those iterated integrals by means of probabilistic tools, and then build a differential calculus with respect to $x$ starting from the iterated integrals. Those considerations motivate the introduction of H\"older continuous multiplicative functionals.

\begin{defn}
\label{def:classic rough path}Consider $\alpha\in\left(0,1\right)$
and let $n=\left\lfloor \frac{1}{\alpha}\right\rfloor $. Let $x\in\mathcal{C}^{\alpha}\left(\left[0,T\right];E\right)$
be a Hölder path and assume there exists an object $\mathbf{x}:\Delta_{2}\rightarrow G^{\left(n\right)}\left(E\right)$
defined through the mapping  
\[
\left(s,t\right)\mapsto\mathbf{x}_{ts}:=\left(1,\bx_{ts}^{1},\bx_{ts}^{2},\ldots,\bx_{ts}^{n}\right),
\]
where $\bx_{ts}^{1}:=x_{t}-x_{s}$ and where we recall that $G^{(n)}$ is introduced in Definition \ref{def:free-group}. In addition, we suppose that $\bx$ enjoys the following two properties:
\begin{equation}
\label{Multiplicative property}
\mathbf{x}_{tu}\otimes\mathbf{x}_{us}=\mathbf{x}_{ts}\,\,\,\,\,\text{(Multiplicative\,\,\,property)}
\end{equation}
 and 
\[
|\bx_{ts}^{i}|\leq\| x^{1}\|_{\alpha}^{i}\frac{|t-s|^{i\alpha}}{\Gamma(i\al+1)}
\,\,\,\,\,\text{for all } i\in\left\{ 1,\cdots   ,n\right\} \,\,\,\,\,\text{ (Analytic\,\,\,property)} .
\]
Here $\Gamma$ is the Gamma function. Then we call $\mathbf{x}$ a rough path above $x$ and we denote the space of
all $\al$-H\"older rough paths by $\mathscr{C}^{\alpha}\left(\left[0,T\right];E\right)$. Whenever the underlying domain and range of this space is clear from context, we simply write $\mathscr{C}^\alpha$. 
\end{defn}

Note that $\mathscr{C}^{\alpha}$ is not a vector space. Indeed, $\mathscr{C}^{\alpha}$ is not
a linear space due to the fact that $G^{(n)}$ is not a linear space. However, we can equip $\mathscr{C}^{\alpha}$ with the following metric:
\begin{equation}\label{eq:def-metric-rough}
d_{\alpha}\left(\mathbf{x},\mathbf{y}\right):=\sum_{i=1}^{n}\| \bx^{i}-\by^{i}\|_{i\alpha}.
\end{equation}
 One can also consider a subspace of this space called the space of
geometric rough paths and denoted by $\mathscr{C}_{g}^{\alpha}$, which
is defined as the closure of all smooth rough paths with respect to the metric $d_{\alpha}$ given by \eqref{eq:def-metric-rough}. Otherwise stated, 
$\mathbf{x}\in\mathscr{C}^{\alpha}$ is a geometric rough path if there exists a sequence of smooth paths $\left\{ \mathbf{x}^{n}\right\} :\Delta_{2}\rightarrow G^{\left(n\right)}\left(E\right)$
such that $d_{\al}(\mathbf{x}^{n},\mathbf{x})$ converges to 0.

The next theorem will give us a canonical extension of the rough path
from the truncated space $T^{\left(n\right)}\left(E\right)$ to all
the space $T\left(E\right).$ This extension is crucial in order to
ensure the existence and uniqueness of linear differential equations
controlled by irregular noise. The theorem and its proof can be found
in \cite[Theorem 3.7]{LyonsLevy}. 
\begin{thm}\label{thm:extension-rough-path}
Let $\mathbf{x}\in\mathscr{C}^{\alpha}$ be a rough path of order
$\alpha\in\left(0,1\right)$ and let $n=\left\lfloor \frac{1}{\alpha}\right\rfloor $.
Then there exists a unique extension of $\mathbf{x}$ to the space
$T\left(E\right)$ which satisfies the multiplicative and analytic
property. That is, for all $m\geq n+1$ there exists an object $\bx^{m}:\Delta_{2}\rightarrow E^{\otimes m}$
such that 
\[
\bx_{ts}^{m}=\sum_{i=0}^{m}\bx_{tu}^{m-i}\otimes \bx_{us}^{i},
\]
and for all $\left(s,t\right)\in\Delta_{2}$ we have
\[
|\bx_{ts}^{i}|\leq\| \mathbf{x}^{1}\|_{\alpha}^{i}  \frac{|t-s|^{i\alpha}}{\Gamma(i\al+1)}\,\,\,\,\forall i\geq1.
\]  
\end{thm}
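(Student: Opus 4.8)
The plan is to prove the extension theorem (Theorem~\ref{thm:extension-rough-path}) by an induction on the level $m \ge n+1$, constructing $\bx^m$ from the already-defined lower levels $\bx^1, \dots, \bx^{m-1}$ via a sewing-type argument. At the heart of the construction lies the observation that the multiplicativity constraint $\bx_{ts}^m = \sum_{i=0}^m \bx_{tu}^{m-i}\otimes\bx_{us}^i$ can be rephrased as a coboundary condition: defining the candidate increment from the existing data,
\beq
\mu_{ts}^m := \sum_{i=1}^{m-1}\bx_{tu}^{m-i}\otimes\bx_{us}^i,
\eeq
one checks (using the already-established multiplicativity at levels $< m$) that the map $(s,u,t)\mapsto \mu_{ts}^m - \mu_{tu}^m - \bx_{tu}^m\otimes\2 \dots$ --- more precisely the quantity $\delta_u$ applied to the natural two-index object built from $\mu^m$ --- is already a genuine increment, i.e. lies in the image of $\delta$ only if it vanishes. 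I would first verify the algebraic identity that $\delta_u \mu_{ts}^m$ equals an explicit expression in the lower levels which, thanks to the inductive multiplicativity hypotheses, collapses to zero. This is the standard ``$\delta\delta=0$''-flavoured computation and is purely algebraic.

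Once the relevant quantity is shown to be a cocycle, I would invoke the sewing lemma (Gubinelli's Proposition~1, cited in the excerpt) to produce a unique element $\bx^m : \Delta_2 \to E^{\otimes m}$ with $\delta_u\bx_{ts}^m = \mu_{ts}^m - (\text{the }i=0,m\text{ terms})$, equivalently $\bx_{ts}^m = \bx_{tu}^m + \bx_{us}^m + \sum_{i=1}^{m-1}\bx_{tu}^{m-i}\otimes\bx_{us}^i$, which is exactly the desired multiplicative relation at level $m$. The sewing lemma applies because the a priori H\"older regularity of the $\mu^m$-term is of order $m\alpha > n\alpha \ge 1$ (since $m \ge n+1 > 1/\alpha$), so one sits strictly above the critical threshold and the Young/sewing machinery gives both existence and uniqueness. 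The uniqueness part of the sewing lemma is what upgrades ``an extension exists'' to ``\emph{the} extension is unique'', which is the precise claim of the theorem.

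The remaining point is the quantitative analytic bound $|\bx_{ts}^i| \le \|\mathbf{x}^1\|_\alpha^i\, |t-s|^{i\alpha}/\Gamma(i\alpha+1)$ for all $i \ge 1$, which must be carried along the induction rather than deduced afterwards. I would run a ``neoclassical inequality'' argument: assuming the bound at all levels $< m$, the sewing lemma's quantitative estimate controls $|\bx_{ts}^m|$ by a sum over a partition of the quantities $|\delta_u \mu^m|$, and one bounds $\sum_{i=1}^{m-1}\|\mathbf{x}^1\|^{m}\,|t-u|^{(m-i)\alpha}|u-s|^{i\alpha}/(\Gamma((m-i)\alpha+1)\Gamma(i\alpha+1))$ using the classical neoclassical inequality $\sum_{i=0}^m \frac{x^{i\alpha}y^{(m-i)\alpha}}{\Gamma(i\alpha+1)\Gamma((m-i)\alpha+1)} \le \frac{(x+y)^{m\alpha}}{\Gamma(m\alpha+1)}$ (or its standard refinement with a constant depending only on $\alpha$), producing precisely the claimed $\Gamma$-decay. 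I expect the main obstacle to be exactly this analytic step: one must make sure the constant produced by the sewing lemma at each stage does not accumulate over the infinitely many levels $m$, which is why the factorial/Gamma normalisation in the statement is essential and why the neoclassical inequality (rather than a crude triangle-inequality bound) has to be used. The algebraic cocycle verification, by contrast, is routine bookkeeping with the tensor product and the operator $\delta$.
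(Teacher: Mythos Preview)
The paper does not give its own proof of this statement; it is recorded as a preliminary fact with a reference to \cite[Theorem~3.7]{LyonsLevy}. Your outline---induction on the level $m$, verification that the three-index object $\sum_{i=1}^{m-1}\bx_{tu}^{m-i}\otimes\bx_{us}^i$ is a $\delta$-cocycle, application of the sewing lemma at regularity $m\alpha>1$, and the neoclassical inequality to propagate the $\Gamma$-decay---is precisely the standard argument of that reference, and is moreover the same template the paper adapts in detail for its Volterra analogue (Theorem~\ref{extension}).

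One small presentational point: as written, your $\mu_{ts}^m$ already depends on the midpoint $u$, so the phrase ``$\delta_u\mu_{ts}^m$'' is ambiguous. It is cleaner to introduce the three-parameter object $A_{tus}:=\sum_{i=1}^{m-1}\bx_{tu}^{m-i}\otimes\bx_{us}^i$ and check $\delta A=0$ on $\Delta_4$ (this is the routine computation using multiplicativity at levels $<m$); the sewing/$\Lambda$-map then produces the unique two-index $\bx^m$ with $\delta\bx^m=A$. With that clarification your plan is correct.
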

Notice that Theorem \ref{thm:extension-rough-path} tells us that in order to construct the solution to
a rough differential equation in terms of its signature, we just need
to give a probabilistic construction of the first $n=\left\lfloor \frac{1}{\alpha}\right\rfloor $
iterated integrals. Then we know that all higher order
iterated integrals have a canonical (and deterministic) construction
only depending on the lower order integrals. We will try to reproduce this mechanism in the Volterra context.

\section{\label{sec:Volterra-Signatures}Volterra Signatures }

\subsection{\label{Def of Volterra signature}Definition and first properties}
In this section we will define precisely what we mean by a Volterra
signature over a smooth path. In this way the Volterra type integrals will be trivially defined and we can focus on their algebraic and analytic properties. This gives some insight on what can be expected in more irregular cases.
First we need to present an elementary inequality
we will use later (see e.g. \cite[Lemma 4.4]{DeyaTIndel1} for more details). 
\begin{lem}
\label{lem:Nice inequlaity }Let $\beta\in\left[0,1\right]$, $\gamma>0$,
and $0\leq r\leq q\leq\tau\leq T$. Then the following inequality
holds 
\[
|\left(\tau-r\right)^{-\gamma}-\left(q-r\right)^{-\gamma}|\leq\left(\tau-q\right)^{\beta}\left(q-r\right)^{-\gamma-\beta}.
\]

\end{lem}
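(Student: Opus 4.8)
The plan is to reduce the estimate to an elementary calculus statement about the function $g(x) = x^{-\gamma}$ on the positive half-line. Fix $0 \le r \le q \le \tau \le T$ and write $a = q - r$ and $h = \tau - q$, so that $a \ge 0$, $h \ge 0$, and the claim becomes
\[
\bigl| (a+h)^{-\gamma} - a^{-\gamma} \bigr| \le h^{\beta} \, a^{-\gamma - \beta}.
\]
Since $\gamma > 0$, the map $x \mapsto x^{-\gamma}$ is decreasing, so the left-hand side equals $a^{-\gamma} - (a+h)^{-\gamma} \ge 0$; there is no absolute value to worry about once we commit to this ordering.

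First I would handle the case $h \ge a$ (equivalently $\tau - q \ge q - r$) separately, because then the bound is essentially trivial: we simply drop the negative term and use monotonicity together with $h \ge a$ to write
\[
a^{-\gamma} - (a+h)^{-\gamma} \le a^{-\gamma} = a^{-\gamma-\beta} \cdot a^{\beta} \le a^{-\gamma-\beta} \, h^{\beta},
\]
which is exactly what we want (using $\beta \ge 0$). The main case is therefore $0 \le h \le a$. Here I would use the mean value theorem (or equivalently write the difference as $\int$ of the derivative): for some $\xi \in (a, a+h)$,
\[
a^{-\gamma} - (a+h)^{-\gamma} = \gamma\, \xi^{-\gamma-1} \, h \le \gamma\, a^{-\gamma-1} \, h,
\]
where the last inequality uses $\xi \ge a$ and $-\gamma - 1 < 0$. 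It then remains to check that $\gamma\, a^{-\gamma-1} h \le a^{-\gamma-\beta} h^{\beta}$, i.e. that $\gamma\, h^{1-\beta} \le a^{1-\beta}$; since $h \le a$ and $1 - \beta \ge 0$ we have $h^{1-\beta} \le a^{1-\beta}$, so the inequality holds provided $\gamma \le 1$.

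The one point that needs a little care — and the only place the hypothesis $\beta \in [0,1]$ (rather than just $\beta \ge 0$) genuinely enters, together with a restriction on $\gamma$ — is the constant $\gamma$ appearing in the mean value step when $\gamma > 1$. To get a clean bound with constant $1$ in full generality, I would instead interpolate: write
\[
a^{-\gamma} - (a+h)^{-\gamma} = \bigl( a^{-\gamma} - (a+h)^{-\gamma} \bigr)^{\beta} \cdot \bigl( a^{-\gamma} - (a+h)^{-\gamma} \bigr)^{1-\beta},
\]
bound the first factor by $(a^{-\gamma})^{\beta} = a^{-\gamma\beta}$ using monotonicity, and bound the second factor using the elementary inequality $|u^{-\gamma} - v^{-\gamma}| \le |u - v| \cdot \min(u,v)^{-\gamma-1}\cdot(\text{harmless constant})$ — or more simply the sub-additivity-type bound $a^{-\gamma} - (a+h)^{-\gamma} \le$ (something controlled by $h\, a^{-\gamma-1}$ up to the case split above). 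Honestly, the cleanest writeup is just the two-case argument above, and I expect the author does exactly that; the slightly delicate point, and the thing I would be most careful about, is making sure the case split $h \lessgtr a$ is exhaustive and that the powers $1 - \beta$ and $-\gamma - 1$ are manipulated with the correct monotonicity at each step. None of this is deep, but getting the direction of every inequality right is the whole content of the lemma.
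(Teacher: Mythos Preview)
The paper does not actually prove this lemma; it merely refers the reader to \cite[Lemma 4.4]{DeyaTIndel1}. So there is no in-paper argument to compare against, and your two-case split (distinguishing $h\ge a$ from $h\le a$, with $a=q-r$ and $h=\tau-q$) is a perfectly standard and clean way to obtain the estimate when $\gamma\le 1$. For that range the mean-value step gives $a^{-\gamma}-(a+h)^{-\gamma}\le \gamma\,a^{-\gamma-1}h\le a^{-\gamma-1}h$, and together with the trivial bound $a^{-\gamma}-(a+h)^{-\gamma}\le a^{-\gamma}$ you get $\min(h\,a^{-\gamma-1},a^{-\gamma})\le h^{\beta}a^{-\gamma-\beta}$ for every $\beta\in[0,1]$, which is exactly the claim.

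Your hesitation about $\gamma>1$ is not a weakness of your argument but a genuine issue with the stated hypothesis: the inequality is simply \emph{false} for $\gamma>1$. For instance, with $\gamma=2$, $\beta=1$, $a=1$, $h=\tfrac{1}{10}$ one computes $1-(1.1)^{-2}\approx 0.174$ while $h^{\beta}a^{-\gamma-\beta}=0.1$. So no interpolation trick can rescue the case $\gamma>1$, and you should not try to. The lemma should be read with $\gamma\in(0,1]$, which is the only range the paper ever uses (cf.\ hypothesis~\textbf{(H)}, where $\gamma\in(0,1)$). With that caveat your proof is complete; just drop the tentative last paragraph about patching the large-$\gamma$ case.
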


Our constructions will rely on specific assumptions about the power type singularity of the kernel $k$ appearing in \eqref{eq:motiv: Volterra equations}. Inspired by Lemma \ref{lem:Nice inequlaity },  the main hypothesis we shall use can be summarized as follows.
\begin{description}[wide, labelwidth=!, labelindent=0pt]
\item [H] 
Let $k$ be a kernel $k:\Delta_{2}\rightarrow\mathbb{R}$.
We assume that there exists $\gamma\in (0,1)$ such that for all $\left(s,r,q,\tau\right)\in\Delta_{4}\left(\left[0,T\right]\right)$ and $\eta,\beta\in [0,1]$
 we have
\begin{align}
|k\left(\tau,r\right)| & \lesssim|\tau-r|^{-\gamma}\\\label{2nd}
|k\left(\tau,r\right)-k\left(q,r\right)| & \lesssim |q-r|^{-\gamma-\eta}|\tau-q|^{\eta}
\\\label{3rd}
|k\left(\tau,r\right)-k\left(\tau,s\right)| & \lesssim |\tau -r|^{-\gamma-\eta}|r-s|^{\eta} \\\label{4th}
|k\left(\tau,r\right)-k\left(q,r\right)-k\left(\tau,s\right)+k\left(q,s\right)|&\lesssim |q-r|^{-\gamma-\beta}|r-s|^{\beta} 
\\\label{5th}
|k\left(\tau,r\right)-k\left(q,r\right)-k\left(\tau,s\right)+k\left(q,s\right)| &\lesssim 
|q-r|^{-\gamma-\eta}|\tau-q|^{\eta}.
\end{align} 
Here all the inequalities $\lesssim$ are independent of the parameters $\gamma,\beta$ and $\eta$.
In the sequel a kernel fulfilling condition \textbf{(H)} will be called Volterra kernel of order $-\gamma$. 
\end{description}

\begin{rem}\label{rem7}
If a kernel $k$ satisfies $(\bf{H})$ then by the interpolation inequality $a\wedge b \leq a^{\theta}b^{1-\theta}$ for any $\theta\in [0,1]$ applied to the minimum of (\ref{4th}) and (\ref{5th}) it follows that for any $\beta,\eta\in\left[0,1\right]$ we have 
\begin{equation}\label{bound H}
|k\left(\tau,r\right)-k\left(q,r\right)-k\left(\tau,s\right)+k\left(q,s\right)| \lesssim|\tau-q|^{\eta}|q-r|^{-\beta-\gamma-\eta}|r-s|^{\beta}.
\end{equation} 
\end{rem}

With those assumptions in hand we can now introduce the notion of iterated Volterra integral and Volterra signature, which parallel \eqref{eq:def-iterated-intg} and \eqref{eq:signature-smooth-x}.

\begin{defn}
Let us consider a path $x\in C^{1}\left(\left[0,T\right];E\right)$ and a Volterra kernel $k:\Delta_{2}\rightarrow\mathbb{R}$  satisfying $\mathbf{\left(H\right)}$. The \emph{iterated Volterra integral} of order $n$ is a mapping $\bz^{n}:\Delta_{3}\rightarrow E^{\otimes n}$
given by 
\begin{equation}
\left(s,t,\tau\right)\mapsto \bz_{ts}^{n,\tau}=\int_{t>r_{n}>\cdots   >r_{1}>s}k(\tau,r_{n})\bigotimes_{j=1}^{n-1} k\left(r_{j+1},r_{j}\right)dx_{r_j}.\label{eq:Volterra iterated integrals}
\end{equation}
We also consider the collection
of iterated Volterra integrals as an element of the free algebra. Specifically, we define the element $\bz_{ts}^{\tau}\in T^{(\infty)}(E)$ as follows:
\[
\bz_{ts}^{\tau}=\left(1,\bz_{ts}^{1,\tau},\ldots,\bz_{ts}^{n,\tau},\ldots\right) ,
\]
where we recall that the spaces $T^{(\infty)}(E)$ are introduced in Definition \ref{def:free-group}.
\end{defn}

\begin{rem}
As already highlighted in the introduction, notice that in the definition~\eqref{eq:Volterra iterated integrals} the variable $\tau$ is considered as an additional parameter indexing $z$. While we might be mostly interested in the case $\tau=t$, this extra freedom will play an essential role in our considerations.
\end{rem}
\begin{rem}
Observe that  the Volterra integrals are denoted by $(s,t,\tau)\mapsto\bz_{ts}^{n,\tau}$ as opposed to $(s,t)\mapsto\bz_{st}^{n}$   in the regular rough path setting. This small modification will ease our notation when one has to deal with integrals of the form
$\int_{0}^{t}k(t,r)f_{r}dx_r$.
\end{rem}
\begin{rem}
A particularly important note is that the collection of Volterra iterated integrals $\bz=(1,\bz^1,\ldots)$ is not contained in the free nilpotent Lie group of  $G$ given in Definition~\ref{def:free-group}. We expect that one needs a different algebraic approach to these integrals due to the kernels $k$ involved in the integrals. Especially in the singular case it is quite intuitive that Volterra iterated integrals do not lie in the free nilpotent lie group, as there is no concept of integration by parts. That is, let $x^i$ and $x^j$ be two real valued smooth paths, and consider the second level $\bz^2$. Then observe that a simple integration by parts would yield 
\begin{multline}
\int_{t>r>u>s}k(t,r)k(r,u)dx^i_udx^j_r = 
\int_{s}^{t} k(t,r) dx_{r}^{i} \, \int_{s}^{t} k(t,r) dx_{r}^{j}
\\ - \int_{t>r>u>s}k(t,r)k(r,r)dx^j_udx^i_r-\int_{t>r>u>s}k(t,r)\frac{d}{dr}k(r,u)dx^j_udx^i_r.
\end{multline}
However, since $k$ is singular we have $k(r,r)=\infty$, and the derivative $\frac{d}{dr}k(r,u)$ would no longer be integrable. This additional singularity prevents us to exhibit a bracket  defined as the commutator of the tensor product in Definition~\ref{def:free-algebra} (here considered for the second level term).  Therefore a deeper investigation into the algebraic properties of the Volterra iterated integrals given in \eqref{eq:Volterra iterated integrals} would be highly interesting, and we hope to tell more on this aspect in the future. 
\end{rem}

When $x$ is a smooth  function, iterated Volterra integrals enjoy a regularity property which is similar to the analytic property in Definition \ref{def:classic rough path}. This is labelled in the following proposition.

\begin{prop}\label{prop:gamma bound smooth functionals}
Let $k:\Delta_{2}(\left[0,T\right])\rightarrow\mathbb{R}$ be a Volterra
kernel which satisfies $\mathbf{\left(H\right)}$ with $\gamma<1$,
and assume $x$ is a continuously differentiable function. For $n\geq1$, consider the path $\bz^{n,\tau}$ defined by \eqref{eq:Volterra iterated integrals}.
Then for $(s,t)\in\Delta_{2}(\left[0,T\right])$ we have that
\[
|\bz_{ts}^{n,\tau}|\leq
\frac{\big( \| x\|_{\cac^{1}}\Gamma(1-\gamma) \big)^{n}}{\Gamma\left(n\left(1-\gamma\right)\right)}
\left(\tau-s\right)^{-\gamma}\left(t-s\right)^{\left(n-1\right)\left(1-\gamma\right)+1}
,
\]
where the $\cac^{1}$ norm of $x$ is defined by $\| x\|_{\cac^{1}}:=\sup_{t\in\left[0,T\right]}\left(|x_{t}|+|\dot{x}_{t}|\right)$. 
\end{prop}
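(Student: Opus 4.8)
The plan is to proceed by induction on $n$. The key structural observation is that the $n$-th order iterated Volterra integral satisfies the recursive relation
\[
\bz_{ts}^{n,\tau}=\int_{s}^{t}k(\tau,r_n)\,\bz_{r_ns}^{n-1,r_n}\otimes dx_{r_n},
\]
so that, using $|dx_{r_n}|\leq\|x\|_{\cac^1}\,dr_n$ and the inductive bound on $|\bz_{r_ns}^{n-1,r_n}|$, the estimate on $|\bz_{ts}^{n,\tau}|$ is reduced to bounding an explicit scalar integral over $[s,t]$ of a product of power functions. The base case $n=1$ is immediate: by $\mathbf{(H)}$ one has $|k(\tau,r)|\lesssim|\tau-r|^{-\gamma}$, and since $\tau-r\geq\tau-t\geq 0$ is the wrong direction, one instead writes $\tau-r\geq$ — more carefully, for $r\in[s,t]$ and $\tau\geq t$ we have $|\tau-r|^{-\gamma}\leq$ no uniform bound of the stated form, so one keeps $|\tau-r|^{-\gamma}$ inside the integral and uses monotonicity $|\tau - r|^{-\gamma} \le |\tau - s|^{-\gamma}$ only where legitimate; the cleanest route is to bound $|k(\tau,r)|\lesssim(\tau-s)^{-\gamma}$ is false too. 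Instead I will exploit that $(\tau-r)^{-\gamma}\le(\tau-t)^{-\gamma}$ is also wrong-signed, so the honest statement is that one should factor $(\tau-r)^{-\gamma}$ and integrate it against the remaining regular pieces — see below.

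The core of the induction is the following: assume
\[
|\bz_{qs}^{n-1,\te}|\leq C_{n-1}\,(\te-s)^{-\gamma}(q-s)^{(n-2)(1-\gamma)+1},\qquad
C_{n-1}=\frac{(\|x\|_{\cac^1}\Gamma(1-\gamma))^{n-1}}{\Gamma((n-1)(1-\gamma))}.
\]
Applying this with $\te=q=r_n$ gives $|\bz_{r_ns}^{n-1,r_n}|\le C_{n-1}(r_n-s)^{(n-2)(1-\gamma)+1-\gamma}=C_{n-1}(r_n-s)^{(n-1)(1-\gamma)}$, where the exponent simplification $(n-2)(1-\gamma)+1-\gamma=(n-1)(1-\gamma)$ is the arithmetic heart of the telescoping. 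Then
\[
|\bz_{ts}^{n,\tau}|\leq \|x\|_{\cac^1}\,C_{n-1}\int_{s}^{t}(\tau-r_n)^{-\gamma}(r_n-s)^{(n-1)(1-\gamma)}\,dr_n.
\]
Now I use the fractional-integral identity \eqref{RL_on_one}: the integral equals $\Gamma(1-\gamma)$ times $I_{s+}^{1-\gamma}\big((\cdot-s)^{(n-1)(1-\gamma)}\big)(\tau)$ evaluated — but the upper limit is $t$, not $\tau$. To handle this I first bound $(\tau-r_n)^{-\gamma}\le(t-r_n)^{-\gamma}$? No — again wrong sign since $\tau\ge t$ makes $\tau - r_n \ge t - r_n$. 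The correct move is: since $r_n\le t\le\tau$, write $(\tau-r_n)^{-\gamma}\leq(\tau-t)^{-\gamma}$ only if $\gamma<0$. So the genuinely correct estimate keeps the kernel as is and extends the domain: $\int_s^t(\tau-r_n)^{-\gamma}(r_n-s)^{\mu}dr_n$ with $\mu=(n-1)(1-\gamma)$; since the integrand is positive and $t\le\tau$, change variables or simply bound $(\tau - r_n)^{-\gamma}$ using $\tau - r_n = (\tau - t) + (t - r_n)\ge$ — one splits $(\tau-r_n)^{-\gamma}$ as controlled by $(\tau-s)^{-\gamma}$ times a factor. In fact the clean inequality is $(\tau-r_n)^{-\gamma}\le (\tau-s)^{-\gamma}$ when $\gamma>0$ and $r_n\ge s$, which \emph{is} correctly signed since $\tau - r_n \le \tau - s$. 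Wait: $r_n \ge s \Rightarrow \tau - r_n \le \tau - s \Rightarrow (\tau-r_n)^{-\gamma}\ge(\tau-s)^{-\gamma}$. So that is again the wrong direction. The resolution, and the step I expect to require the most care, is to instead not pull out a $(\tau-s)^{-\gamma}$ prefactor prematurely; rather, bound $(r_n - s)^\mu \le (t-s)^\mu$ (correctly signed, $r_n\le t$), obtaining
\[
|\bz_{ts}^{n,\tau}|\leq \|x\|_{\cac^1}C_{n-1}(t-s)^{\mu}\int_s^t(\tau-r_n)^{-\gamma}dr_n
=\|x\|_{\cac^1}C_{n-1}(t-s)^{\mu}\,\frac{(\tau-s)^{1-\gamma}-(\tau-t)^{1-\gamma}}{1-\gamma},
\]
and then use the elementary inequality $(\tau-s)^{1-\gamma}-(\tau-t)^{1-\gamma}\le (t-s)^{1-\gamma}$ (valid since $x\mapsto x^{1-\gamma}$ is concave with $1-\gamma\in(0,1)$), together with $(\tau-s)^{-\gamma}\cdot(\tau-s)^{\,}$... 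Hmm, this yields $(t-s)^{\mu+1-\gamma}$ but no $(\tau-s)^{-\gamma}$ factor. The cleanest reconciliation, matching the stated form, is to instead bound $(\tau - r_n)^{-\gamma} \le (\tau - s)^{-\gamma} \cdot \big(\tfrac{\tau - s}{\tau - r_n}\big)^{\gamma}$ trivially, and control $(\tau-s)/(\tau-r_n)$ — this is where Lemma~\ref{lem:Nice inequlaity } enters, giving $(\tau-r_n)^{-\gamma}\le(\tau-s)^{-\gamma}+(r_n-s)^{\beta}(\tau-s)^{-\gamma-\beta}$-type bounds. I expect the intended argument uses precisely the convolution property \eqref{convolution_fractional_integral} iterated $n$ times after first reducing via $|k(\tau,r)|\le(\tau-s)^{-\gamma}$-style bounds that are justified on the \emph{diagonal-regularized} simplex.

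To finish, once the scalar integral is shown to be bounded by $\Gamma(1-\gamma)\cdot\frac{\Gamma((n-1)(1-\gamma)+1)}{\Gamma(n(1-\gamma)+1)}(t-s)^{n(1-\gamma)}\cdot(\tau-s)^{-\gamma}$ up to the constant, one assembles $C_n=\|x\|_{\cac^1}C_{n-1}\Gamma(1-\gamma)\cdot\frac{\Gamma((n-1)(1-\gamma))}{\Gamma(n(1-\gamma))}$... and checks that the telescoping product of Gamma factors collapses to $\frac{(\|x\|_{\cac^1}\Gamma(1-\gamma))^n}{\Gamma(n(1-\gamma))}$, with the exponent of $(t-s)$ equal to $(n-1)(1-\gamma)+1$ exactly as claimed. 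The main obstacle, as flagged, is the careful handling of the $\tau$-dependence: the factor $(\tau-s)^{-\gamma}$ must be extracted from $(\tau-r_n)^{-\gamma}$ in a way compatible with the recursion, and this is exactly what Lemma~\ref{lem:Nice inequlaity } and hypothesis $\mathbf{(H)}$ are designed to supply; the arithmetic of the exponents and the Beta-function identity $\int_s^t(t-r)^{a-1}(r-s)^{b-1}dr=(t-s)^{a+b-1}B(a,b)$ are then routine.
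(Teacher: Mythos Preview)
Your inductive framework is sound and, after applying the inductive hypothesis at $\theta=q=r_n$, you arrive at the same scalar integral the paper reaches directly (via the convolution property \eqref{convolution_fractional_integral} of the Riemann--Liouville operator):
\[
\int_s^t (\tau-r)^{-\gamma}(r-s)^{(n-1)(1-\gamma)}\,dr.
\]
The gap is that you never actually bound this integral; your several attempts to extract a factor $(\tau-s)^{-\gamma}$ by monotonicity are, as you yourself note, all wrong-signed, and the closing appeal to Lemma~\ref{lem:Nice inequlaity } is a red herring: that lemma controls \emph{differences} $(\tau-r)^{-\gamma}-(q-r)^{-\gamma}$ and plays no role here.

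The correct move (and this is what the paper does) is the substitution $r=s+\theta(t-s)$, which gives
\[
\int_s^t (\tau-r)^{-\gamma}(r-s)^{\mu}\,dr
=(t-s)^{\mu+1}(\tau-s)^{-\gamma}\int_0^1\Bigl(1-\theta\,\tfrac{t-s}{\tau-s}\Bigr)^{-\gamma}\theta^{\mu}\,d\theta,
\]
with $\mu=(n-1)(1-\gamma)$. Since $\tau\ge t$ implies $\tfrac{t-s}{\tau-s}\le 1$, the $\theta$-integral is dominated by $\int_0^1(1-\theta)^{-\gamma}\theta^{\mu}\,d\theta=B(1-\gamma,\mu+1)$. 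This simultaneously produces the factor $(\tau-s)^{-\gamma}$, the factor $(t-s)^{(n-1)(1-\gamma)+1}$, and (via $B(a,b)=\Gamma(a)\Gamma(b)/\Gamma(a+b)$) the Gamma-function recursion needed to close the induction. Note that your closing identification ``$\int_s^t(t-r)^{a-1}(r-s)^{b-1}dr=(t-s)^{a+b-1}B(a,b)$'' has the wrong upper parameter: the integral at hand has $(\tau-r)^{-\gamma}$, not $(t-r)^{-\gamma}$, and it is precisely the passage from $\tau$ to $t$ that requires the substitution above.
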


\begin{proof}
Starting from Definition (\ref{eq:Volterra iterated integrals}) and invoking the fact that $x$ is a $\cac^{1}$ function, we directly get
\begin{align*}
|\bz_{ts}^{n,\tau}|&=\left| \int_{t>r_{n}>\cdots   >r_{1}>s}k(\tau,r_{n})\bigotimes_{j=1}^{n-1} k\left(r_{j+1},r_{j}\right)dx_{r_j}\right|
\\
&\leq\int_{t>r_{n}>\cdots   >r_{1}>s}|k\left(\tau,r_{n}\right)|\prod_{i=1}^{n-1}|k\left(r_{i+1},r_{i}\right)||\dot{x}_{r_{1}}|\otimes\cdots   \otimes|\dot{x}_{r_{n}}|dr_{1}\cdots   dr_{n}.
\end{align*}
Therefore hypothesis $(\mathbf{H})$ on the kernel $k$ entails
\begin{align*}
|\bz_{ts}^{n,\tau}|&
\leq\| x\|_{C^{1}}^{n}\int_{t>r_{n}>\cdots   >r_{1}>s}\left(\tau-r_{n}\right)^{-\gamma}\prod_{i=1}^{n-1}\left(r_{i+1}-r_{i}\right)^{-\gamma}dr_{1}\cdots   dr_{n}\\
&=\| x\|_{C^{1}}^{n}\Gamma\left(1-\gamma\right)^{n-1}\int_{s}^{t}\left(\tau-r\right)^{-\gamma}I_{s+}^{\left(n-1\right)\left(1-\gamma\right)}\left(1\right)\left(r\right)dr,
\end{align*}
where we have used the convolution property (\ref{convolution_fractional_integral}) of the Riemann-Liouville
integral operator $I^{\alpha}$ described in Section \ref{subsec:Notation}. Furthermore, it follows from the identities in Equation~\eqref{RL_on_one} that
\begin{align}\label{a2}
&\int_{s}^{t}\left(\tau-r\right)^{-\gamma}I_{s+}^{\left(n-1\right)\left(1-\gamma\right)}\left(1\right)\left(r\right)dr=
c_{n,\ga}\int_{s}^{t}\left(\tau-r\right)^{-\gamma}\left(r-s\right)^{\left(n-1\right)\left(1-\gamma\right)}dr
\notag \\
&
=c_{n,\ga}\left(t-s\right)^{\left(n-1\right)\left(1-\gamma\right)+1}\left(\tau-s\right)^{-\gamma}\int_{0}^{1}\left(1-\theta\frac{t-s}{\tau-s}\right)^{-\gamma}\theta^{\left(n-1\right)\left(1-\gamma\right)-1}d\theta, 
\end{align}
where we have used the notation $c_{n,\ga}=[\Gamma((n-1)(1-\gamma)+1)]^{-1}$ and the substitution $r=s+\theta\left(t-s\right)$.
In addition, since $\tau\geq t,$ it is clear that 
\begin{equation}\label{a3}
\int_{0}^{1}\left(1-\theta\frac{t-s}{\tau-s}\right)^{-\gamma}\theta^{\left(n-1\right)\left(1-\gamma\right)-1}d\theta\leq B\left(1-\gamma,\left(n-1\right)\left(1-\gamma\right)\right),
\end{equation}
where $B$ is the Beta function. Observe that classical identities for Gamma and
Beta functions, yields that 
\begin{equation}\label{a4}
\frac{B\left(1-\gamma,\left(n-1\right)\left(1-\gamma\right)\right)}{\Gamma\left(\left(n-1\right)\left(1-\gamma\right)\right)}=\frac{\Gamma(1-\gamma)}{\Gamma((n-1)(1-\gamma)+1)}
\end{equation}
plugging relation (\ref{a4}) into (\ref{a3}) and then (\ref{a2}) we have then obtained 
\begin{equation}\nonumber
\int_{s}^{t}\left(\tau-r\right)^{-\gamma}I_{s+}^{\left(n-1\right)\left(1-\gamma\right)}\left(1\right)\left(r\right)dr
\leq \frac{\Gamma(1-\gamma)}{\Gamma\left((n-1)(1-\gamma)+1\right)}(\tau-s)^{-\gamma}(t-s)^{(n-1)(1-\gamma)+1},
\end{equation}
which is our claim.
\end{proof}

\subsection{\label{subsec:convolution product}Convolution product}
We will now try to get an equivalent to the multiplicative property of the signature (\ref{Multiplicative property}) in a Volterra context. Unfortunately this property does not hold directly for a Volterra rough path, due to the interaction between variables in the kernel $k$. However, we will show that if
we modify the tensor product to be a type of convolution product,
then we still get a concatenation type property under this product. 
\begin{prop}
\label{prop:convolutional Chens relation smooth} Let $(s,u,t)\in \Delta_{3}$. Consider two $C^{1}$ functions  $x:\left[s,u\right]\rightarrow E$
and $y:\left[u,t\right]\rightarrow E$, and denote by $q=x\star y$
their concatenation. Let $\bz^{n}$ be the $n$-th Volterra integral of $q$  on $(s,t)$ as defined in (\ref{eq:Volterra iterated integrals}), namely for all $(s,t,\tau)\in \Delta_{3}$ set
\[
\bz_{ts}^{n,\tau}:=\int_{t>r_{n}>\cdots   >r_{1}>s}\bigotimes_{j=n}^{1} k\left(r_{j+1},r_{j}\right)dq_{r_j},
\]
with the convention that $r_{n+1}=\tau$. Then for $(s,u,t,\tau)\in\Delta_{4}$ we have 
\begin{equation}\label{con con}
\bz_{ts}^{n,\tau}=\sum_{i=0}^{n}\bz_{tu}^{n-i,\tau}\ast \bz_{us}^{i,\cdot},
\end{equation}
where the convolution product $\ast$ is defined as follows for all $0\leq i\leq n$
 \begin{align}\label{con prod}
&\bz_{tu}^{n-i,\tau}\ast \bz_{us}^{i,\cdot}\\
&:=\int_{t>r_{n}>\cdots   >r_{i+1}>u}\bigotimes_{j=n}^{i+1} k\left(r_{j+1},r_{j}\right)dy_{r_j}
\otimes \int_{u>r_{i}>\cdots >r_{1}>s}k\left(r_{i+1},r_{i}\right)\bigotimes_{j=i-1}^{1} k\left(r_{j+1},r_{j}\right)dx_{r_j}. \notag
\end{align}
Here we have used the convention $\bz^0\equiv 1$ and $\bz^n\ast1=1\ast\bz^n=\bz^n$.
\end{prop}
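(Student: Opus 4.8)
The plan is to decompose the iterated integral defining $\bz_{ts}^{n,\tau}$ according to the position of the index $i$ at which the chain of integration variables $s<r_1<\cdots<r_n<t$ crosses the intermediate point $u$, and then identify each piece with a term $\bz_{tu}^{n-i,\tau}\ast\bz_{us}^{i,\cdot}$ by using the definition of the concatenated path $q=x\star y$. First I would observe that, since $q$ is $C^1$ on $[s,t]$ and $\{u\}$ has Lebesgue measure zero, the simplex $\{t>r_n>\cdots>r_1>s\}$ over which we integrate can be partitioned (up to a null set) into the $n+1$ disjoint regions
\[
A_i=\{\, s<r_1<\cdots<r_i<u<r_{i+1}<\cdots<r_n<t\,\},\qquad i=0,\dots,n,
\]
with the conventions $A_0=\{u<r_1<\cdots<r_n<t\}$ and $A_n=\{s<r_1<\cdots<r_n<u\}$. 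Hence
\[
\bz_{ts}^{n,\tau}=\sum_{i=0}^{n}\int_{A_i}k(\tau,r_n)\bigotimes_{j=1}^{n-1}k(r_{j+1},r_j)\,dq_{r_j},
\]
where one must be slightly careful that in this ordered tensor product the factor $dq_{r_i}$ is paired with the kernel $k(r_{i+1},r_i)$ linking a variable below $u$ to one above $u$.

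Next I would use that, by the definition \eqref{eq:def-concatenation} of the concatenation, on $[s,u]$ the differential $dq_r$ equals $dx_r$ and on $[u,t]$ it equals $dy_r$; since all integration variables in $A_i$ with index $\le i$ lie in $[s,u]$ and those with index $\ge i+1$ lie in $[u,t]$, the integral over $A_i$ factors exactly as a tensor product of an integral over $\{u<r_{i+1}<\cdots<r_n<t\}$ in the $dy$ variables and an integral over $\{s<r_1<\cdots<r_i<u\}$ in the $dx$ variables. The only subtlety is the "bridging" kernel $k(r_{i+1},r_i)$: in the definition \eqref{con prod} of $\ast$ this kernel is attached to the lower block, evaluated with its first argument frozen at $r_{i+1}$ and integrated against $dx_{r_i}$, which is precisely what Fubini produces here. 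Writing the upper block as $\bz_{tu}^{n-i,\tau}$ (an $(n-i)$-fold Volterra integral of $y$ with top kernel $k(\tau,\cdot)$) and the lower block as $\bz_{us}^{i,\cdot}$ with the free upper parameter $r_{i+1}$ kept as the dot, we recover exactly the term $\bz_{tu}^{n-i,\tau}\ast\bz_{us}^{i,\cdot}$; the boundary cases $i=0$ and $i=n$ match the stated conventions $\bz^0\equiv1$ and $\bz^n\ast 1=1\ast\bz^n=\bz^n$.

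The main obstacle, and the point requiring the most care, is purely bookkeeping rather than analytic: one has to check that the convolution-type coupling through the single kernel $k(r_{i+1},r_i)$ is handled consistently with the conventions $r_{n+1}=\tau$ and $\bz^0\equiv 1$, and that the order of the tensor factors in $\bigotimes_{j=n}^{1}$ is respected when the product is split into two blocks (so that $\ast$ genuinely extends $\otimes$ and the indices $n-i$ and $i$ land on the correct sides). Since $q$ is $C^1$, every integral in sight is an absolutely convergent Riemann integral, so Fubini's theorem applies without restriction and no regularity estimate is needed at this stage; the proposition is really the smooth-case algebraic identity that motivates the definition of $\ast$, and once the indexing is set up the computation is a one-line application of the decomposition above together with the definition of $q$.
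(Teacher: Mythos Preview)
Your proposal is correct and follows precisely the approach indicated in the paper: splitting the simplex $\Delta_n([s,t])$ into the sub-domains $\Delta_{n,j}=\{t>\cdots>r_{j+1}>u>r_j>\cdots>s\}$ according to where the chain of variables crosses $u$, and then identifying each piece with the corresponding convolution term via the definition of the concatenation. The paper in fact leaves the verification to the reader, so your write-up, including the careful handling of the bridging kernel $k(r_{i+1},r_i)$ and the boundary cases $i=0,n$, is more detailed than what appears there.
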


\begin{proof}
This proof is left to the patient reader. The result is easily checked by splitting the domain $$\Delta_{n}([s,t]) =\{(r_{1},\cdots   ,r_{n})\in[s,t] \,| \ t>r_{n}>\cdots   >r_{1}>s\}$$ 
into sub-domains 
$$\Delta_{n,j}=\{(r_{1},\cdots   ,r_{n})\in[s,t]\, | \ t>\cdots   >r_{j+1}>u>r_{j}>\cdots   >s\}.$$
\end{proof}
\begin{rem}\label{rem14}
In order to make formula (\ref{con prod}) more concrete, let us explicitly compute the integrals we obtain for $n=2$. In this case relation (\ref{con con}) reads 
\begin{equation}\label{relation 320}
\bz_{ts}^{2,\tau}=\bz_{tu}^{2,\tau}+\bz_{tu}^{1,\tau}\ast\bz_{us}^{1,\cdot}+\bz_{us}^{2,\tau}, 
\end{equation}
and we observe that 
\begin{equation}\label{eq:sim}
\bz_{tu}^{1,\tau}\ast\bz_{us}^{1,\cdot}=\int_{t>r_{2}>u}k(\tau,r_{2})dx_{r_{2}}\otimes\int_{u>r_{1}>s}k(r_{2},r_{1})dx_{r_{1}},
\end{equation}
where we note the common integration variable $r_{2}$ in  the above product.  In relation~(\ref{con prod}), we also notice  that since the kernel $k$  is smooth  except on the diagonal, the function 
\begin{equation}\nonumber
l_{r_{2}}=\int_{u>r_{1}>s}k(r_{2},r_{1})dx_{r_{1}} 
\end{equation}
inherits the smoothness of $k$. Therefore the integral
\begin{equation}\nonumber
\int_{t>r_{2}>u}k(\tau,r_{2})dx_{r_{2}}\otimes l_{r_{2}},
\end{equation}
which features in (\ref{eq:sim}), can be interpreted as a Riemann-Stieltjes integral. One of our main task will then be to control possible singularities arising from $k$ when $x$  is no longer assumed to be smooth, but rather a Hölder  path. We refer to Section \ref{sec:Volterra-Rough-Paths}  for a further analysis of this point. 
\end{rem}

Next we will present a technical lemma which will become useful in later analysis of the Volterra signature. 
It states that the convolution product $\ast$ behaves similarly to the tensor product $\otimes$ on small scales. 
\begin{lem}
Let $\mathcal{P}$ be a partition of $\left[s,t\right]$ such that
$|\mathcal{P}|\rightarrow0$, and consider $\bz^{j}$ for $j=1,\ldots,p$
as constructed in Equation (\ref{eq:Volterra iterated integrals})
with a continuously differentiable driving noise and a kernel $k$
satisfying $\left(\mathbf{H}\right)$ with singularity of order $\gamma<\frac{1}{2}$.
Then for $n,p\geq1$ with $p-n\geq1,$ we have 
\begin{equation}\label{b1}
\lim_{|\mathcal{P}|\rightarrow0} \left| \sum_{\left[u,v\right]\in\mathcal{P}}
\bz_{vu}^{p-n,\tau}\ast \bz_{us}^{n,\cdot}
-\bz_{vu}^{p-n,\tau}\otimes \bz_{us}^{n,u}\right| =0,
\end{equation}
\end{lem}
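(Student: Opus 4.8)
The plan is to bound the increment over a single partition interval $[u,v]$ by a power $(v-u)^{\mu}$ with $\mu>1$, and then to sum over $\mathcal{D}$, which produces a bound of order $|\mathcal{D}|^{\mu-1}(t-s)$ that vanishes as $|\mathcal{D}|\to 0$.

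First I would pin down exactly how the two products differ. Unravelling the definition \eqref{con prod} of the convolution product and the definition \eqref{eq:Volterra iterated integrals} of the iterated Volterra integral, one sees that both $\bz_{vu}^{p-n,\tau}\ast \bz_{us}^{n,\cdot}$ and $\bz_{vu}^{p-n,\tau}\otimes \bz_{us}^{n,u}$ are iterated integrals over the same simplex $\{s<r_1<\cdots<r_n<u<r_{n+1}<\cdots<r_p<v\}$, with integrands that coincide except in the single ``bridging'' kernel joining the two blocks: in the $\ast$-product it is $k(r_{n+1},r_n)$, where $r_{n+1}$ is a free integration variable ranging in $[u,v]$, whereas in the $\otimes$-product it is $k(u,r_n)$, with upper argument frozen at $u$. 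The order of the tensor factors of the two blocks need not agree, but this is immaterial for the size estimate since we use a tensor norm with $|a\otimes b|\le|a|\,|b|$. Consequently the difference equals
\[
\int_{\substack{s<r_1<\cdots<r_n<u\\ u<r_{n+1}<\cdots<r_p<v}}
k(\tau,r_p)\Big(\prod_{j=n+1}^{p-1}k(r_{j+1},r_j)\Big)\big[k(r_{n+1},r_n)-k(u,r_n)\big]\Big(\prod_{j=1}^{n-1}k(r_{j+1},r_j)\Big)\,dx_{r_1}\otimes\cdots\otimes dx_{r_p}.
\]

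Next I would estimate this integral. Since $x\in\cac^1$, I write $dx_{r_j}=\dot x_{r_j}\,dr_j$ and use $|\dot x|\le\|x\|_{\cac^1}$. I fix an exponent $\eta\in(\gamma,1-\gamma)$; this interval is non-empty precisely because $\gamma<\tfrac{1}{2}$, and in particular $\gamma+\eta<1$. The crucial input is hypothesis \textbf{(H)}, namely \eqref{2nd} applied with the ordering $r_n<u<r_{n+1}$, which yields
\[
|k(r_{n+1},r_n)-k(u,r_n)|\lesssim |u-r_n|^{-\gamma-\eta}\,|r_{n+1}-u|^{\eta}.
\]
Integrating the block $r_1,\dots,r_n$ first with the help of $|k(r_{j+1},r_j)|\lesssim|r_{j+1}-r_j|^{-\gamma}$, the convolution property \eqref{convolution_fractional_integral} and the identities \eqref{RL_on_one} --- exactly as in the proof of Proposition~\ref{prop:gamma bound smooth functionals} --- the integral of $|u-r_n|^{-\gamma-\eta}\prod_{j=1}^{n-1}|r_{j+1}-r_j|^{-\gamma}$ over $\{s<r_1<\cdots<r_n<u\}$ is bounded, up to a $\Gamma$/Beta constant, by $(u-s)^{n(1-\gamma)-\eta}\le T^{n(1-\gamma)-\eta}$; this is finite because $\gamma+\eta<1$, and the exponent is positive because $\eta<1-\gamma\le n(1-\gamma)$. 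For the block $r_{n+1},\dots,r_p$ I would bound $|r_{n+1}-u|^{\eta}\le(v-u)^{\eta}$, use $\tau\ge v$ to get $|k(\tau,r_p)|\lesssim(\tau-r_p)^{-\gamma}\le(v-r_p)^{-\gamma}$, and apply once more the fractional-calculus computation of Proposition~\ref{prop:gamma bound smooth functionals} to obtain a factor $(v-u)^{(p-n)(1-\gamma)}$. Collecting the pieces, with a constant $C$ depending only on $p,n,\gamma,\eta,\|x\|_{\cac^1}$ and $T$ (and, importantly, not on $\tau$),
\[
\Big|\,\bz_{vu}^{p-n,\tau}\ast \bz_{us}^{n,\cdot}-\bz_{vu}^{p-n,\tau}\otimes \bz_{us}^{n,u}\,\Big|\le C\,(v-u)^{(p-n)(1-\gamma)+\eta}=:C\,(v-u)^{\mu}.
\]

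Finally, summing over the partition gives
\[
\Big|\sum_{[u,v]\in\mathcal{D}}\big(\bz_{vu}^{p-n,\tau}\ast \bz_{us}^{n,\cdot}-\bz_{vu}^{p-n,\tau}\otimes \bz_{us}^{n,u}\big)\Big|\le C\sum_{[u,v]\in\mathcal{D}}(v-u)^{\mu}\le C\,|\mathcal{D}|^{\mu-1}(t-s),
\]
and since $p-n\ge1$ we have $\mu\ge 1-\gamma+\eta>1$ (using $\eta>\gamma$), so the right-hand side tends to $0$ as $|\mathcal{D}|\to 0$, which is \eqref{b1}. The only genuinely delicate point --- and the sole place where the standing assumption $\gamma<\tfrac{1}{2}$ is used --- is this exponent bookkeeping: one must spend part of the H\"older regularity $\eta$ of the bridging kernel in \eqref{2nd} to integrate the singularity $|u-r_n|^{-\gamma-\eta}$ that it generates, which forces $\eta<1-\gamma$, while retaining enough of it, $\eta>\gamma$, for the resulting power $(v-u)^{\mu}$ to be summable with exponent $\mu>1$; the window $(\gamma,1-\gamma)$ is non-empty exactly when $\gamma<\tfrac{1}{2}$.
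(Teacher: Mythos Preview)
Your proof is correct and follows essentially the same route as the paper's: both identify the difference on a single interval $[u,v]$ as an iterated integral with the bridging factor $k(r_{n+1},r_n)-k(u,r_n)$, estimate this via \eqref{2nd} with an auxiliary exponent, integrate the inner and outer blocks separately by fractional calculus, and then sum over the partition. The only differences are cosmetic simplifications on your side --- you pull out $(r_{n+1}-u)^{\eta}\le(v-u)^{\eta}$ and replace $(\tau-r_p)^{-\gamma}$ by $(v-r_p)^{-\gamma}$, which lets you sum the bare powers $(v-u)^{\mu}$ directly, whereas the paper keeps the $(\tau-u)^{-\gamma}(u-s)^{n(1-\gamma)-\beta}$ factors and bounds the final sum by a Riemann integral.
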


\begin{proof}
In order to study the left hand side of (\ref{b1}), let us set for $(u,v)\in \Delta_{2}$  
\begin{equation}\label{c2}
D\left(u,v\right)=\bz_{vu}^{p-n,\tau}\ast \bz_{us}^{n,\cdot}-\bz_{vu}^{p-n,\tau}\otimes \bz_{us}^{n,u}
\end{equation}
Then according to Definition (\ref{con prod}) it is readily checked that 
\begin{multline}\nonumber
D\left(u,v\right)=\int_{v>r_{p}>\cdots   >r_{n+1}>u}\,\bigotimes_{i=p}^{n+1} k\left(r_{i+1},r_{i}\right)dx_{r_i} 
\\
 \otimes \int_{u>r_{n}>\cdots>r_{1}>s} \Big[ k\left(r_{n+1},r_{n}\right)-k\left(u,r_{n}\right)\Big] dx_{r_{n}}\bigotimes_{i=n-1}^{1}k\left(r_{i+1},r_{i}\right)dx_{r_{i}},
\end{multline}
where we have written $r_{n+1}=\tau$ for the sake of readability.
We now proceed along the same lines as for Proposition \ref{prop:gamma bound smooth functionals}. Namely if we assume that  $\|\dot{x}\|_{\infty}\leq M$, we get
\begin{multline*}
\left| D\left(u,v\right)\right| \leq M^{p}\int_{v>r_{p}>\cdots >r_{n+1}>u}\prod_{i=n+1}^{p}|k\left(r_{i+1},r_{i}\right)|
\\
\times\left(\int_{u>r_{n}>\cdots >r_{1}>s}|k\left(r_{n+1},r_{n}\right)-k\left(u,r_{n}\right)|\prod_{i=1}^{n-1}|k\left(r_{i+1},r_{i}\right)|dr_{n}\dots dr_{1}\right)dr_{p}\ldots dr_{n+1}.
\end{multline*}
Furthermore, from $\left(\mathbf{H}\right)$ we have $|k\left(r_{i+1},r_{i}\right)|\lesssim|r_{i+1}-r_{i}|^{-\gamma},$
and any $\beta\in [0,1]$ we have
\[
|k\left(r_{n+1},r_{n}\right)-k\left(u,r_{n}\right)|\lesssim |r_{n+1}-u|^{\beta}|u-r_{n}|^{-\gamma-\beta}.
\]
 Thus restricting  $\beta\in\left(0,1-\gamma\right),$ we
get 
\begin{multline}\nonumber
|D\left(u,v\right)|\leq M^{p}\int_{v>r_{p}>\cdots >r_{n+1}>u}\left(\tau-r_{n}\right)^{-\gamma}\prod_{i=n+1}^{p-1}|r_{i+1}-r_{i}|^{-\gamma}|r_{n+1}-u|^{\beta}dr_{n+1}\cdots dr_{p}\\
\times \int_{u>r_{n}>\cdots >r_{1}>s}|u-r_{n}|^{-\gamma-\beta}\prod_{i=1}^{n-1}|r_{i+1}-r_{i}|^{-\gamma}dr_{1}\cdots dr_{n}.
\end{multline}
Hence, integrating the outside integral over the simplex $u>r_{n}>\cdots>r_{1}>s$, we end up with
\begin{align}
&|D\left(u,v\right)| \notag\\
&\leq C_{\beta,\gamma,p,n}\int_{v>r_{p}>\cdots   >r_{n+1}>u}\prod_{i=n+1}^{p}|r_{i+1}-r_{i}|^{-\gamma}\left(r_{n+1}-u\right)^{\beta}dr_{n+1}\ldots dr_{p}\times (u-s)^{n(1-\gamma)-\beta}\notag\\
&\leq C_{\beta,\gamma,p,n}\int_{u}^{v}\left(\tau-r\right)^{-\gamma}(r-u)^{\left(p-n-2\right)\left(1-\gamma\right)+\left(\beta+1\right)}dr\times (u-s)^{n(1-\gamma)-\beta},\label{a5}
\end{align}
where $C_{\beta,\gamma,p,n}:=\frac{\Gamma(1-\gamma-\beta)\Gamma(1-\gamma)^{n-1}M^{p}}{\Gamma(n(1-\gamma)-\beta+1)}$ and where we have used the convolution property~(\ref{convolution_fractional_integral}) of the Riemann-Liouville fractional integral. Now we can do a change of variables $r=u+\theta\left(v-u\right)$ and
find
\[
\int_{u}^{v}\left(\tau-r\right)^{-\gamma}\left(r-u\right)^{\left(p-n-2\right)\left(1-\gamma\right)+\beta+1}dr
=\left(\tau-u\right)^{-\gamma}\left(v-u\right)^{\left(p-n-2\right)\left(1-\gamma\right)+\beta+2}c_{\gamma,\tau,u,v},
\]
where $c_{\gamma,\tau,u,v}$ is a function bounded by the Beta function, i.e.
\begin{eqnarray*}
c_{\gamma,\tau,u,v}&=&
\int_{0}^{1}\left(1+\theta\frac{v-u}{\tau-u}\right)^{-\gamma}\theta^{\left(p-n-2\right)\left(1-\gamma\right)+\beta+1}d\theta
\\
&\leq& 
B(1-\gamma,(p-n-2)(1-\gamma)+\beta+2)<\infty.
\end{eqnarray*}
Plugging this identity into (\ref{a5}) and writing $C=C_{\beta,\gamma,p,n}$ for constants which may change from line to line, we get
\[
|D\left(u,v\right)|\leq C\left(\tau-u\right)^{-\gamma}\left(v-u\right)^{\left(p-n-2\right)\left(1-\gamma\right)+\beta+2}(u-s)^{n\left(1-\gamma\right)-\beta}
\]
Therefore it is readily checked that 
\[\sum_{[u,v]\in \mathcal{P}} |D(u,v)|\leq C |\mathcal{P}|^{\left(p-n-2\right)\left(1-\gamma\right)+\beta+1}\times\int_{s}^{t}\left(\tau-u\right)^{-\gamma}(u-s)^{n\left(1-\gamma\right)-\beta}du,
\]
where $|\mathcal{P}|$ denotes the size of the mesh of $\mathcal{P}$. Taking into account the definition (\ref{c2}) of $D(u,v)$, this finishes the proof.
\end{proof}

\section{\label{sec:Volterra-Rough-Paths}Volterra Rough Paths}

To begin the study of Volterra rough paths, we need to understand the
structure and regularity which may be extracted from a Volterra path.
As we have already seen, a Volterra path is really a two parameters
function on a simplex $\Delta_{2}$ taking values
in some space $E$. A simple example of a function of this form could
be the singular kernel
\begin{equation}\label{f1}
f_{t}^{\tau}:=\left(\tau-t\right)^{-\ga}, 
\end{equation}
defined for $t\leq \tau$ and $\ga \in (0,1)$. Note that for a function $f$ given as in (\ref{f1}) and $(s,t,\tau)\in\Delta_{3}$  it follows from Lemma \ref{lem:Nice inequlaity } that 
\[
|f_{ts}^{\tau}|\leq\left(\tau-t\right)^{-(\ga+1)}\left(t-s\right).
\]
This tells us that as long as $t<\tau$ then we have a Lipschitz
bound on $f^{\tau}$, i.e. for any $\epsilon>0$ we have $f^{\tau}\in C_{\rm{Lip}}\left(\left[0,\tau-\epsilon\right]\right).$
Similarly one can consider the function
\[
g_{t}^{\tau}=\left(\tau-t\right)^{\alpha},
\]
for some $\alpha\in\left(0,1\right)$ and 
$t\leq \tau$. Then it is easy to see
that globally, $g$ is $\alpha$-H\"older continuous in both variables.
However, for any small $\epsilon>0$ we have that $t\mapsto g_{t}^{\tau}$
is $C^{\infty}\left(\left[0,\tau-\epsilon\right]\right).$ Along the same lines,
one can see that $\tau\mapsto g_{t}^{\tau}\in C^{\infty}\left(\left[t+\epsilon,T\right]\right)$.
In the sequel we will generalize the above considerations  to processes of the form 
\begin{equation}\label{f2}
z_{ts}=\int_{s}^{t} k(t,r)dx_{r},
\end{equation}
where $x$ is an $\alpha$-H\"older path and $k$ a possibly singular kernel of order $-\ga$. 
This section is devoted to a definition and analysis of generic Volterra type rough paths like in (\ref{f2}).  
\subsection{Definition and sewing lemma}

	Let us go back for a moment to the increment defined in (\ref{f2}). One way to define the term $\int_{s}^{t}k(\tau,r)dx_{r}$ is to split the integral in the right hand side of (\ref{f2}) along a partition $\mathcal{P}$ of $[s,t]$, 
\[
\int_{s}^{t}k\left(t,r\right)dx_{r}=\sum_{\left[u,v\right]\in\mathcal{P}\left[s,t\right]}\int_{u}^{v}k\left(t,r\right)dx_{r},
\]
Then for each $[u,v]\in \mathcal{P}$ we have some regularity of $\int_{u}^{v}k\left(t,r\right)dx_{r}$
coming from the difference $v-u$ which is contributed by the driving
noise, and some (possibly singular) regularity coming from the difference
$t-v$. Much of the difficulty in the analysis of Volterra rough paths
will be due to such considerations. 
In order to capture the different regularities discussed above, we will make use of three different quantities, which will later be used in the definition of various classes of Volterra H\"older functions. For two parameters $(\alpha,\gamma)\in (0,1)^2$, we set $\rho=\alpha-\gamma$, and we will consider 
the semi-norms defined by
\begin{align}\label{norm1}
\| z\|_{\left(\alpha,\gamma\right),1}:= & \sup_{\left(s,t,\tau\right)\in\Delta_{3}}\frac{|z_{ts}^{\tau}|}{|\tau-t|^{-\gamma}|t-s|^{\alpha}\wedge |\tau-s|^{\rho}}
\\
\| z\|_{\left(\alpha,\gamma\right),1,2}:= & \sup_{\substack{\left(s,t,\tau^{\prime},\tau\right)\in\Delta_{4}\\\eta\in [0,1],\zeta \in [0,\rho)}}\frac{|z_{ts}^{\tau\tau^{\prime}}|}{|\tau-\tau^{\prime}|^{\eta}|\tau^{\prime}-t|^{-\eta+\zeta}\left(|\tau^{\prime}-t|^{-\gamma-\zeta}|t-s|^{\alpha} \wedge |\tau^{\prime}-s|^{\rho-\zeta}\right)} ,
\label{eq: two variable  function norm change}
\end{align}
with the convention $z_{ts}^{\tau}=z_{t}^{\tau}-z_{s}^{\tau}$
and $z_{s}^{\tau\tau^{\prime}}=z_{s}^{\tau}-z_{s}^{\tau^{\prime}}$
With these quantities at hand, let us define a space of functions  which we will call Volterra paths.
 
\begin{defn}\label{holder norms}
Let $\left(\alpha,\gamma\right)\in\left(0,1\right)^{2}$ and consider a function  $z:\Delta_{2}\rightarrow E,$ such that $(t,\tau)\mapsto z_{t}^{\tau}$. We assume that for all $\tau\in\left[0,T\right]$ we have 
\[
t\mapsto z_{t}^{\tau}\in\mathcal{C}^{\alpha-\gamma}\left(\left\{ \tau\right\} \right)\cap\mathcal{C}^{\alpha}\left(\left[0,\tau\right)\right),
\]
where we recall that the notation $\mathcal{C}^{\alpha-\gamma}\left(\left\{ \tau \right\} \right)$ has been introduced in (\ref{singleton space}). We also assume that for all $t\in\left[0,T\right]$ the following holds: 
\[
\tau\mapsto z_{t}^{\tau}\in\mathcal{C}^{\alpha-\gamma}\left(\left\{ t\right\} \right)\cap\mathcal{C}^{1}\left(\left(t,T\right]\right).
\]
Then for such a function $z$, define 
\begin{equation}\label{norm2}
\| z\|_{\left(\alpha,\gamma\right)}:=\| z\|_{\left(\alpha,\gamma\right),1}+\| z\|_{\left(\alpha,\gamma\right),1,2},
\end{equation}
where the norms are given as in \eqref{norm1} and \eqref{eq: two variable  function norm change}.  We define the space
of Volterra paths $z:\Delta_{2}\rightarrow E$ as
all paths such that $z_{0}^{\tau}=z_{0}\in E$ for all $\tau\in(0,T]$,
and 
\[
\| z\|_{\left(\alpha,\gamma\right)}<\infty.
\]
We denote this space by $\mathcal{V}^{\left(\alpha,\gamma\right)}\left(\Delta_{2};E\right)$.
In addition, under the mapping 
\[
z\mapsto|z_{0}|+\| z\|_{\left(\alpha,\gamma\right)},
\]
 the space $\mathcal{V}^{\left(\alpha,\gamma\right)}$ is a Banach
space. 
\end{defn}
\begin{rem}
Conventionally, we will use the notation $y_{ts}^{\tau}$ to signify
both functions with three arguments, and the increment of functions
with two arguments, i.e. $y_{ts}^{\tau}=y^{\tau}(s,t)$
and $y_{ts}^{\tau}=y_{t}^{\tau}-y_{s}^{\tau}.$ We hope the specific meaning will always be clear from the context.
Moreover, we will use the same norms as those defined in (\ref{norm2}) for three variable functions $y:\Delta_{3}\rightarrow E$ given by  $(s,t,\tau)\mapsto y_{ts}^{\tau}$. 
\end{rem}

\begin{rem}
The space $\mathcal{V}^{\left(\alpha,\gamma\right)}$ really captures
three different regularities in different areas of $\Delta_{2}\left(\left[0,T\right]\right)$. On
the diagonal line, $\left(t,t\right)$ we clearly have that $z\in \mathcal{V}^{(\alpha,\gamma)}$ is of
$\rho$-Hölder regularity in both variables, where $\rho=\ga-\al$. However, at any point
off the diagonal we have $\alpha$-regularity in the lower variable
and $1$-regularity in the upper variable. The space could have therefore
be defined more generally to capture three different regularities.
However, for our purposes, under the assumption $\left(\mathbf{H}\right)$
and the fact that a Volterra path is of the form $z_{t}^{\tau}=\int_{0}^{t}k\left(\tau,r\right)dx_{r}$,
we easily get the $1$-regularity in the upper argument. This will
play a central role throughout the analysis of such paths. 
\end{rem}

\begin{rem}
The reader might wonder about the introduction of an extra parameter $\zeta$ in the definition  \eqref{eq: two variable  function norm change} of  $\|z\|_{(\alpha,\gamma),1,2}$. In order to justify this new parameter, consider $z\in \mathcal{V}^{(\alpha,\gamma)}$ such that $z_0^\tau=0$ for all $\tau$. We wish to bound the diagonal difference $z_t^t-z_s^s$. To this aim, we decompose the difference as 
\begin{equation}\label{eq:z decomp}
z_t^t-z_s^s=z_{ts}^t+z^{ts}_s=z_{ts}^t+z_{s0}^{ts}. 
\end{equation}
Then the term $z_{ts}^t$ in the right hand side of~\eqref{eq:z decomp} is easily bounded by $|t-s|^\rho$ according to~\eqref{norm1}. In order to bound the term $z_{s0}^{ts}$, we resort to \eqref{eq: two variable  function norm change} and write 
\begin{equation}\label{eq:double sing}
|z_{s0}^{ts}|\lesssim |t-s|^{\eta}|s-s|^{-\eta+\zeta}|s-0|^{\rho-\zeta}. 
\end{equation}
We now tune the parameter $\zeta$ in order to avoid the singularity in $|\cdot|^{-\eta}$ in \eqref{eq:double sing}. Namely, pick $\eta\leq \rho$, and $\zeta=\eta$. This yields 
\begin{equation*}
|z^{ts}_{so}|\lesssim |t-s|^\eta.
\end{equation*}
Plugging this information into 	\eqref{eq:z decomp} we end up with $|z_t^t-z_s^s|\lesssim |t-s|^\eta$ for any $\eta\leq \rho$, which is the desired regularity on the diagonal. 
\end{rem}

\begin{rem}\label{rem:a}
The norms and spaces in Definition \ref{holder norms} can be easily generalized to increments of two variables, which yields the definition of a space $\mathcal{V}_2^{(\alpha,\gamma)}(\Delta_3,E)$. The norm on $\mathcal{V}_2^{(\alpha,\gamma)}(\Delta_3,E)$  is given by 
\begin{equation}\label{eq:three variable gen norm}
\|z\|_{(\alpha,\gamma)}=\|z\|_{(\alpha,\gamma),1}+\|z\|_{(\alpha,\gamma),1,2}.
\end{equation} 
Those spaces will be used for the definition of convolutional controlled paths in Section~\ref{non-linear setting}.
\end{rem}

Our construction of solutions to rough Volterra equations like (\ref{eq:motiv: Volterra equations}) will hinge heavily on a Volterra version of the Sewing Lemma. We start by defining the class $\mathscr{V}^{(\alpha,\gamma)}$ of paths to which this Sewing Lemma will apply. 

\begin{defn}\label{abstract integrnds}
Let $\alpha\in\left(0,1\right)$, $\gamma\in (0,1)$ with $\alpha-\gamma>0$, $\kappa \in (0,\infty)$ and $\beta\in\left(1,\infty\right)$.
Denote by $\mathscr{V}^{(\alpha,\gamma)(\beta,\kappa)}\left(\Delta_{3}\left[0;T\right];E\right)$,
the space of all functions $\varXi:\Delta_{3}\left(\left[0,T\right]\right)\rightarrow E$
such that 
\begin{equation}\label{abstract integrand space}
\|\varXi\|_{\mathscr{V}^{(\alpha,\gamma)(\beta,\kappa)}}=\|\varXi\|_{\left(\alpha,\gamma\right)}+\|\delta\varXi\|_{\left(\beta,\kappa\right)}<\infty,
\end{equation}
where $\delta$  is the operator defined for any $s<u<t$  and a two variables function $g$ by 
\begin{equation}\label{delta}
\delta_{u}g_{ts}=g_{ts}-g_{tu}-g_{us}.
\end{equation}
In (\ref{abstract integrand space}), we also use the following convention: the norm $\|\varXi \|_{(\alpha,\gamma)}$  is given by  \eqref{eq:three variable gen norm}, while we have 
\begin{equation*}
\|\delta \varXi\|_{(\alpha,\gamma)}=\|\delta \varXi\|_{(\alpha,\gamma),1}+\|\delta \varXi\|_{(\alpha,\gamma),1,2},
\end{equation*}
where
the quantities $\|\delta \varXi \|_{(\beta,\gamma),1}$ and $\|\delta \varXi \|_{(\beta,\gamma),1,2}$ are slight modifications of (\ref{norm1}) respectively defined by
\begin{align}\label{dd3}
\|\delta\varXi\|_{\left(\beta,\kappa\right),1}&:=\sup_{\left(s,m,t,\tau\right)\in\Delta_{4}}\frac{|\delta_{m}\varXi_{ts}^{\tau}|}{|\tau-t|^{-\kappa}|t-s|^{\beta}\wedge |\tau-s|^{\beta -\kappa}}
\\\label{dd4}
\|\delta\varXi\|_{\left(\beta,\kappa\right),1,2}&:=\sup_{\substack{\left(s,m,t,\tau^\prime,\tau\right)\in\Delta_{5} \\ \eta\in[0,1],\,\zeta\in [0,\beta-\kappa)}}\frac{|\delta_{m}\varXi_{ts}^{\tau,\tau^\prime}|}{|\tau-\tau^{\prime}|^{\eta}|\tau^{\prime}-t|^{-\eta+\zeta}\left(|\tau^{\prime}-t|^{-\kappa-\zeta}|t-s|^{\beta} \wedge |\tau^{\prime}-s|^{\beta-\kappa-\zeta}\right)} .
\end{align}
In the sequel the space $\mathscr{V}^{(\alpha,\gamma)(\beta,\kappa)}$ will be 
our space of abstract Volterra integrands. 
\end{defn}
We are now ready to state our Sewing Lemma adapted to Volterra integrands.

\begin{lem}\label{lem:(Volterra-sewing-lemma)}
\emph{(Volterra sewing lemma)}
Consider four exponents $\beta\in (1,\infty)$, $\kappa\in (0,1)$, $\alpha\in\left(0,1\right)$ and $\gamma\in(0,1)$
such that $\beta-\kappa\geq \alpha-\gamma >0$. Let $\mathscr{V}^{(\alpha,\gamma)(\beta,\kappa)}$  and $\mathcal{V}^{\left(\al,\gamma\right)}$ be the spaces defined in Definition \ref{abstract integrnds} and  \ref{holder norms} respectively. Then there exists a linear continuous map
$\mathcal{I}:\mathscr{V}^{(\alpha,\gamma)(\beta,\kappa)}\left(\Delta_{3};E\right)\rightarrow\mathcal{V}^{\left(\alpha,\gamma\right)}\left(\Delta_{3};E\right)$
such that the following holds true:

\noindent
$(i)$ The quantity $\mathcal{I}(\varXi^{\tau})_{ts}:=\lim_{|\mathcal{P}|\rightarrow 0} \sum_{[u,v]\in\mathcal{P}} \varXi_{vu}^{\tau}$ exists for all $(s,t,\tau)\in \Delta_{3}$, where $\mathcal{P}$  is a generic partition of $[s,t]$  and $|\mathcal{P}|$  denotes the mesh size of the partition. 

\noindent
$(ii)$ For all $(s,t,\tau)\in \Delta_{3}$ we have that 
\begin{align}\label{sy lemma bound}
|\mathcal{I}\left(\varXi^{\tau}\right)_{ts}-\varXi_{ts}^{\tau}|\lesssim & \|\delta\varXi\|_{\left(\beta,\kappa\right),1}\left(|\tau-t|^{-\kappa}|t-s|^{\beta}\wedge|\tau-s|^{\beta-\kappa}\right),
\end{align} 
while for $(s,t,\tau^{\prime},\tau)\in \Delta_{4}$ we get for any $\eta\in[0,1]$ and $\zeta\in [0,\beta-\kappa)$
\begin{multline}\label{sy lemma upper arg bound}
|\mathcal{I}(\varXi^{\tau\tau^\prime})_{ts}-\varXi_{ts}^{\tau\tau^\prime}| \\
\lesssim  \|\delta\varXi\|_{\left(\beta,\kappa\right),1,2}\left[|\tau-\tau^{\prime}|^{\eta}|\tau^{\prime}-t|^{-\eta+\zeta}\left(|\tau^{\prime}-t|^{-\kappa-\zeta}|t-s|^{\beta} \wedge |\tau^{\prime}-s|^{\beta-\kappa-\zeta}\right)\right].
\end{multline}

\end{lem}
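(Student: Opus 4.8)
The plan is to adapt the classical sewing lemma of \cite{Gubinelli} to our two-parameter setting, keeping $\tau$ frozen for most of the argument. First I would fix $\tau\in[0,T]$ and view $\varXi^{\tau}$ as an ordinary (non-additive) increment $(s,t)\mapsto\varXi^{\tau}_{ts}$ on $\Delta_{2}([0,T])$. For a partition $\mathcal{P}$ of $[s,t]$ (with $t\le\tau$) set $\mathcal{I}_{\mathcal{P}}(\varXi^{\tau})_{ts}:=\sum_{[u,v]\in\mathcal{P}}\varXi^{\tau}_{vu}$ and run the standard coarsening procedure: if $\mathcal{P}$ has $N+1$ points, a pigeonhole argument yields an interior point $m$ whose enclosing double interval $[u,v]$ satisfies $v-u\le\frac{2}{N-1}(t-s)$, and deleting it changes the Riemann sum by $\pm\delta_{m}\varXi^{\tau}_{vu}$. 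The observation that makes the singular weight harmless is that any sub-interval $[u,v]\subseteq[s,t]$ has $|\tau-v|\ge|\tau-t|$, hence $|\tau-v|^{-\kappa}\le|\tau-t|^{-\kappa}$, so \eqref{dd3} gives $|\delta_{m}\varXi^{\tau}_{vu}|\le\|\delta\varXi\|_{(\beta,\kappa),1}|\tau-t|^{-\kappa}|v-u|^{\beta}$. Iterating the deletion down to $\{s,t\}$ and summing the telescoping series, which converges because $\beta>1$, gives
\[
|\mathcal{I}_{\mathcal{P}}(\varXi^{\tau})_{ts}-\varXi^{\tau}_{ts}|\le C_{\beta}\,\|\delta\varXi\|_{(\beta,\kappa),1}\,|\tau-t|^{-\kappa}|t-s|^{\beta},
\]
uniformly in $\mathcal{P}$. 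A routine comparison of two partitions via their common refinement then shows that $\mathcal{I}(\varXi^{\tau})_{ts}:=\lim_{|\mathcal{P}|\to0}\mathcal{I}_{\mathcal{P}}(\varXi^{\tau})_{ts}$ exists and is independent of the approximating sequence; this yields $(i)$, the $|\tau-t|^{-\kappa}|t-s|^{\beta}$ half of \eqref{sy lemma bound}, and the additivity $\mathcal{I}(\varXi^{\tau})_{ts}=\mathcal{I}(\varXi^{\tau})_{tm}+\mathcal{I}(\varXi^{\tau})_{ms}$ (equivalently $\delta_{m}\mathcal{I}(\varXi^{\tau})=0$).

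The main obstacle is upgrading $|\tau-t|^{-\kappa}|t-s|^{\beta}$ to the full minimum $|\tau-t|^{-\kappa}|t-s|^{\beta}\wedge|\tau-s|^{\beta-\kappa}$ in \eqref{sy lemma bound}, i.e. controlling $\Phi^{\tau}_{ts}:=\mathcal{I}(\varXi^{\tau})_{ts}-\varXi^{\tau}_{ts}$ as $t$ approaches the diagonal $\tau$. From additivity, $\Phi^{\tau}_{ts}=\Phi^{\tau}_{tw}+\Phi^{\tau}_{ws}-\delta_{w}\varXi^{\tau}_{ts}$ for any $w\in(s,t)$. If $|t-s|\le|\tau-t|$ the previous estimate already gives $|\Phi^{\tau}_{ts}|\lesssim\|\delta\varXi\|_{(\beta,\kappa),1}|\tau-t|^{\beta-\kappa}\le\|\delta\varXi\|_{(\beta,\kappa),1}|\tau-s|^{\beta-\kappa}$ since $\beta-\kappa\ge\alpha-\gamma>0$. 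If $|t-s|>|\tau-t|$, I would split at $w:=s+\tfrac12|\tau-s|$, which lies in $(s,t)$ precisely because then $|\tau-s|<2|t-s|$, so that $|w-s|=|\tau-w|=\tfrac12|\tau-s|$ and the piece $[s,w]$ falls into the already-treated regime, contributing $\lesssim|\tau-s|^{\beta-\kappa}$, while the correction $\delta_{w}\varXi^{\tau}_{ts}$ is bounded by $\|\delta\varXi\|_{(\beta,\kappa),1}|\tau-s|^{\beta-\kappa}$ through the second branch of \eqref{dd3}. On $[w,t]$ one repeats the dichotomy, the scale $|\tau-s^{(j)}|$ being halved at each iteration; since $\beta-\kappa>0$ the resulting geometric series sums to $\lesssim|\tau-s|^{\beta-\kappa}$, and combining with the first estimate establishes \eqref{sy lemma bound}.

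For the upper-parameter estimate \eqref{sy lemma upper arg bound}, I would apply the same machinery to the one-variable object $(s,t)\mapsto\varXi^{\tau}_{ts}-\varXi^{\tau'}_{ts}=:\varXi^{\tau\tau'}_{ts}$, whose sewing is $\mathcal{I}(\varXi)^{\tau}_{ts}-\mathcal{I}(\varXi)^{\tau'}_{ts}$ by linearity of the Riemann sums. Now \eqref{dd4} replaces \eqref{dd3}, and the monotonicity $|\tau'-v|^{-\eta-\kappa}\le|\tau'-t|^{-\eta-\kappa}$ for $[u,v]\subseteq[s,t]$ again lets one factor the full $\tau'$-dependent prefactor out of the sums; the coarsening argument then produces $|\mathcal{I}(\varXi^{\tau\tau'})_{ba}-\varXi^{\tau\tau'}_{ba}|\lesssim\|\delta\varXi\|_{(\beta,\kappa),1,2}|\tau-\tau'|^{\eta}|\tau'-b|^{-\eta-\kappa}|b-a|^{\beta}$, and the diagonal splitting of the previous paragraph (splitting toward $\tau'$) upgrades this to the minimum in \eqref{sy lemma upper arg bound}. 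The only delicate point here is that the geometric series now carries the ratio $2^{-(\beta-\kappa-\eta)}$, whose sign depends on whether $\eta<\beta-\kappa$; in both cases one checks — summing a convergent series when $\eta<\beta-\kappa$, and a series dominated by its last term (located at the scale $|\tau'-t|$) when $\eta\ge\beta-\kappa$ — that, using $|\tau'-t|\le|\tau'-s|$ and $\beta-\kappa>0$, the outcome is the announced bound $|\tau-\tau'|^{\eta}|\tau'-t|^{-\eta}\big(|\tau'-t|^{-\kappa}|t-s|^{\beta}\wedge|\tau'-s|^{\beta-\kappa}\big)$.

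Finally, to see that $\mathcal{I}$ maps into $\mathcal{V}^{(\alpha,\gamma)}(\Delta_{3};E)$ (in the sense of Remark \ref{rem:a}) and is linear and continuous, I would write $\mathcal{I}(\varXi)^{\tau}_{ts}=\varXi^{\tau}_{ts}+\Phi^{\tau}_{ts}$ and estimate the two terms separately: $\varXi^{\tau}_{ts}$ by $\|\varXi\|_{(\alpha,\gamma),1}$ through \eqref{norm1}, and $\Phi^{\tau}_{ts}$ by \eqref{sy lemma bound}. Since $\beta>1>\alpha$, $\beta-\kappa\ge\alpha-\gamma$, and all times lie in the bounded interval $[0,T]$, one checks directly that $|\tau-t|^{-\kappa}|t-s|^{\beta}\wedge|\tau-s|^{\beta-\kappa}\lesssim_{T}|\tau-t|^{-\gamma}|t-s|^{\alpha}\wedge|\tau-s|^{\alpha-\gamma}$ — comparing the two branches of each minimum, and on the first branch using $|t-s|\le|\tau-t|$ so that $|\tau-t|^{\gamma-\kappa}|t-s|^{\beta-\alpha}\le|\tau-t|^{(\beta-\kappa)-(\alpha-\gamma)}\le T^{(\beta-\kappa)-(\alpha-\gamma)}$. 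Hence $\|\mathcal{I}(\varXi)\|_{(\alpha,\gamma),1}\lesssim\|\varXi\|_{(\alpha,\gamma),1}+\|\delta\varXi\|_{(\beta,\kappa),1}$; the identical reasoning with \eqref{sy lemma upper arg bound}, \eqref{eq: two variable  function norm change} and \eqref{dd4} controls $\|\mathcal{I}(\varXi)\|_{(\alpha,\gamma),1,2}$, and together with the additivity of $\mathcal{I}$ in $(s,t)$ this shows $\mathcal{I}(\varXi)\in\mathcal{V}^{(\alpha,\gamma)}$ with $\|\mathcal{I}(\varXi)\|_{(\alpha,\gamma)}\lesssim\|\varXi\|_{(\alpha,\gamma)}+\|\delta\varXi\|_{(\beta,\kappa)}$, which is exactly continuity of the linear map $\mathcal{I}$.
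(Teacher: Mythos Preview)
Your proof is correct and the overall architecture --- freeze $\tau$, run a one-variable sewing argument, then repeat verbatim for the difference $\varXi^{\tau\tau'}$ --- is the same as the paper's. The genuine divergence is in how the diagonal branch $|\tau-s|^{\beta-\kappa}$ of the minimum in \eqref{sy lemma bound} is obtained. The paper works with dyadic refinements and, rather than invoking the crude monotonicity $|\tau-v|^{-\kappa}\le|\tau-t|^{-\kappa}$, keeps the full weight and bounds the Riemann sum by an integral:
\[
\sum_{[u,v]\in\mathcal P^{n}}|\tau-v|^{-\kappa}|v-u|^{\beta}
\;\le\; 2^{-n(\beta-1)}|t-s|^{\beta-1}\int_{s}^{t}|\tau-r|^{-\kappa}\,dr
\;\lesssim\; 2^{-n(\beta-1)}\bigl[(\tau-t)^{-\kappa}(t-s)^{\beta}\wedge(\tau-s)^{\beta-\kappa}\bigr],
\]
so both branches of the minimum drop out of a single estimate, with no case analysis. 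Your route --- first extract the off-diagonal bound via coarsening and monotonicity, then bootstrap to the diagonal by iteratively bisecting $[s,t]$ toward $\tau$ and summing a geometric series in $2^{-(\beta-\kappa)}$ --- is more laborious but perfectly valid; the $\eta$-dependent dichotomy you flag for \eqref{sy lemma upper arg bound} (convergent series if $\eta<\beta-\kappa$, last term dominant otherwise, with the borderline producing a harmless logarithm absorbed into $(|\tau'-s|/|\tau'-t|)^{\eta}$) is indeed the correct way to close that case. The paper's integral-comparison trick is worth internalising, since it is exactly the mechanism reused later (e.g.\ in the proof of Theorem~\ref{thm: Integral product well defined}) where two singularities must be handled at once and an iterative splitting would become awkward.
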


\begin{proof}
This is an elaboration of \cite[Lemma~4.2]{FriHai} and we give some details here for the sake of completeness.
 Specifically, we will focus on the convergence of Riemann type sums
 $\sum_{[u,v]\in \mathcal{P}}\varXi_{vu}^{\tau}$ along dyadic partitions. Referring to \cite[Lemma~4.2]{FriHai}, we leave to the patient reader the task of checking the convergence of  $\sum_{[u,v]\in \mathcal{P}}\varXi_{vu}^{\tau}$ along a general partition whose mesh converges to $0$, as well as the relation $\delta \mathcal{I}\left(\varXi\right)=0$.

With those preliminaries in mind, let us consider the $n$-th order  dyadic partition $\mathcal{P}^{n}$ of $\left[s,t\right]$
where each set $\left[u,v\right]\subset\mathcal{P}^{n}$ is of length
$2^{-n}|t-s|$. We define the $n$-th order Riemann sum of $\varXi^{\tau}$, denoted $\mathcal{I}^{n}\left(\varXi\right)_{ts}$, as follows
\[
\mathcal{I}^{n}\left(\varXi^{\tau}\right)_{ts}=\sum_{\left[u,v\right]\in\mathcal{P}^{n}}\varXi_{vu}^{\tau}.
\]
Our aim is to show that the sequence $\left\{\mathcal{I}^{n}\left(\varXi^{\tau}\right);n\geq 1\right\}$ converges to an element $\mathcal{I}(\varXi)$ which fulfills relation (\ref{sy lemma bound}). To this aim we will analyse differences
 $\mathcal{I}^{n+1}(\varXi^{\tau})-\mathcal{I}^{n}(\varXi^{\tau})$ 
and prove the following bound

\begin{equation}\label{ccc}
|\mathcal{I}^{n+1}(\varXi^{\tau})-\mathcal{I}^{n}(\varXi^{\tau})| 
\lesssim 
\frac{\|\delta\varXi\|_{(\beta,\kappa),1}}{2^{n(\beta-1)}}\left(|\tau-t|^{-\kappa}|t-s|^{\beta}\wedge|\tau-s|^{\beta-\kappa}\right).  
\end{equation}
In order to prove (\ref{ccc}), observe that   
\begin{equation}\label{n+1 diff}
\mathcal{I}^{n+1}\left(\varXi^{\tau}\right)_{ts}-\mathcal{I}^{n}\left(\varXi^{\tau}\right)_{ts}=\sum_{\left[u,v\right]\in\mathcal{P}^{n}}\delta_{m}\varXi_{vu}^{\tau},
\end{equation}
where we recall that $\delta$ is given by relation (\ref{delta}) and where we have set $m=\frac{u+v}{2}$. Plugging relation (\ref{dd3}) into (\ref{n+1 diff}), it is thus readily checked that 
\begin{equation}\label{a9}
|\sum_{\left[u,v\right]\in\mathcal{P}^{n}}\delta_{m}\varXi_{vu}^{\tau}|\lesssim\|\delta\varXi\|_{\left(\beta,\kappa\right)}\sum_{\left[u,v\right]\in\mathcal{P}^{n}}|\tau-v|^{-\kappa}|v-u|^{\beta}.
\end{equation}
We will now upper bound the right hand side above. Invoking the fact that $\beta>1$  and $|v-u|=2^{-n}|t-s|$ for $u,v\in \mathcal{P}^{n}$  we write 
\begin{equation}\label{bound3}
\sum_{\left[u,v\right]\in\mathcal{P}^{n}}|\tau-v|^{-\kappa}|v-u|^{\beta}
\leq
2^{-n\left(\beta-1\right)}|t-s|^{\left(\beta-1\right)}\sum_{\left[u,v\right]\in\mathcal{P}^{n}}|\tau-v|^{-\kappa}|v-u| .
\end{equation}
Hence, some elementary considerations on the Riemann sums corresponding to the integral $\int_{s}^{t}|\tau-r|^{-\kappa}dr$ for a $t<\tau$ and parameter $\kappa\in (0,1)$  yield 
\begin{equation}\label{bound2}
\sum_{\left[u,v\right]\in\mathcal{P}^{n}}|\tau-v|^{-\kappa}|v-u|^{\beta} \lesssim2^{-n\left(\beta-1\right)}|t-s|^{\left(\beta-1\right)}\int_{s}^{t}|\tau-r|^{-\kappa}dr.
\end{equation}
In addition, some elementary calculations similar to those in Remark \ref{rem7}  show that for $\kappa \in (0,1) $ we have 
\begin{equation}\nonumber
\int_{s}^{t}|\tau-r|^{-\kappa}dr\lesssim (\tau-t)^{-\kappa}(t-s)\wedge (\tau-s)^{1-\kappa}, 
\end{equation}
where we have used the fact that the integral $\int_{s}^{t}|\tau-r|^{-\kappa}dr$ is converging for $\kappa< 1$. Putting this inequality into (\ref{bound2}) we get 
\begin{equation}\label{bound riemann sum volterra}
\sum_{\left[u,v\right]\in\mathcal{P}^{n}}|\tau-v|^{-\kappa}|v-u|^{\beta}\lesssim2^{-n\left(\beta-1\right)}\left((\tau-t)^{-\kappa}(t-s)^{\beta}\wedge (\tau-s)^{\beta-\kappa}\right).
\end{equation}
Inserting (\ref{bound riemann sum volterra}) into (\ref{bound3}) and then into (\ref{a9}), our claim (\ref{ccc}) is thus easily obtained. With relation (\ref{ccc}) in hand, one immediately gets that the sequence 
 $\left\{\mathcal{I}^{n} \left( \varXi^\tau
  \right)_{ts}\right\}_{n\geq 0}$ 
  is a Cauchy sequence. It thus converges to a quantity $\mathcal{I}\left(\varXi^\tau\right)_{ts}$ which satisfies  (\ref{sy lemma bound}). As mentioned above, the remainder of the proof goes along the same lines as \cite[Lemma~4.2]{FriHai}. We leave it to the patient reader for the sake of conciseness. This proves that the element $\mathcal{I}\left(\varXi^{\tau}\right)$ has finite $\|\cdot\|_{(\beta,\kappa),1}$ norm and that \eqref{sy lemma bound} holds. The next step will be to show that also the integral $\mathcal{I}(\varXi^{\tau\tau^\prime})$ of the increment in the upper variable $\varXi_{ts}^{\tau\tau^\prime}$ is finite in the $\|\cdot\|_{(\beta,\kappa),1,2}$ norm. Following the lines for the proof above, we can just change the integrand $\varXi_{ts}^\tau$ with $\varXi^{\tau\tau^\prime}_{ts}$ and the norms accordingly. Thus, using exactly the same arguments as before,  inequality \eqref{sy lemma upper arg bound} holds as well. 
 This concludes the proof.  
\end{proof}
In order to test the compatibility of our first definitions with the Sewing lemma, we will show that one can construct a Volterra path of the form
$z_{ts}^{\tau}=\int_{s}^{t}k\left(\tau,r\right)dx_{r}$
 in terms of Lemma \ref{lem:(Volterra-sewing-lemma)}.
\begin{thm}
\label{thm:Regularity of Volterra path} Let $x\in\mathcal{C}^{\alpha}$
and $k$ be a Volterra kernel of order $-\gamma$ satisfying $\left(\mathbf{H}\right)$,
such that $\rho=\alpha-\gamma>0$. 
We define an element $\varXi_{ts}^{\tau}=k(\tau,s)x_{ts}$. Then the following holds true:

\begin{enumerate}[wide, labelwidth=!, labelindent=0pt, label=\emph{(\roman*)}]
\setlength\itemsep{.1in}

\item
There exists a $\beta > 1$ and $\kappa >0$ with $\beta-\kappa=\alpha-\gamma$ such that
$\varXi \in \mathscr{V}^{(\alpha,\gamma)(\beta,\kappa)}$, where $\mathscr{V}^{(\alpha,\gamma)(\beta,\kappa)}$  is given in Definition \ref{abstract integrnds}. Therefore the element $\mathcal{I}\left(\varXi^{\tau}\right)$  obtained by applying Lemma \ref{lem:(Volterra-sewing-lemma)} is well defined as an element of $\mathcal{V}^{(\alpha,\gamma)}$ and we set $z_{ts}^{\tau}\equiv\mathcal{I}\left(\varXi^{\tau}\right)_{ts}=\int_{s}^{t}k(\tau,r)dx_{r}$. 

\item
There exists a strictly positive $c$   such that for $(s,t,\tau)\in \Delta_{3}$  we have 
\begin{equation}\label{regularity of z}
|z_{ts}^{\tau}-k(\tau,s)x_{ts}|\leq c\left[(\tau-t)^{-\gamma}(t-s)^{\alpha}\wedge (\tau-s)^\rho\right],  
\end{equation}
and in particular $z$ verifies $\|z\|_{(\al,\ga),1}<\infty$.

\item
For any $\eta \in [0,1]$ and $\zeta\in [0,\rho)$  there exists a strictly positive constant $c$  such that for any $(s,t,q,p)\in \Delta_{4}$  we have 
\begin{equation}\label{eq: joint reg in z}
|z_{ts}^{pq}|\leq c|p-q|^{\eta}|q-t|^{-\eta+\zeta}\left[|q-t|^{-\gamma-\zeta}|t-s|^{\alpha}\wedge |q-s|^{\rho-\zeta}\right],
\end{equation}
where $z_{ts}^{pq}=z^{p}_{t}-z^{q}_{t}-z_{s}^{p}+z_{s}^{q}$. 

\end{enumerate}
\end{thm}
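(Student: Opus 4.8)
The plan is to realize $z$ as the sewing of the explicit germ $\varXi_{ts}^{\tau}=k(\tau,s)x_{ts}$ through the Volterra sewing lemma (Lemma \ref{lem:(Volterra-sewing-lemma)}), and then to read the two sharp estimates (ii)--(iii) off the sewing bounds \eqref{sy lemma bound}--\eqref{sy lemma upper arg bound} together with the elementary regularity that $\varXi$ itself carries. As a preliminary, since $\rho:=\alpha-\gamma>0$ the interval $(1-\alpha,1-\gamma)$ is nonempty; I would fix $\lambda$ in it and set $\beta=\alpha+\lambda$, $\kappa=\gamma+\lambda$, so that $\beta\in(1,1+\rho)\subset(1,\infty)$, $\kappa\in(1-\rho,1)\subset(0,1)$ and $\beta-\kappa=\alpha-\gamma$, which meets every exponent constraint in Definitions \ref{abstract integrnds}, \ref{holder norms} and in Lemma \ref{lem:(Volterra-sewing-lemma)}.

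The heart of the argument is part (i): showing $\varXi\in\mathscr{V}^{(\alpha,\gamma)(\beta,\kappa)}$, i.e.\ controlling the four seminorms in \eqref{abstract integrand space}. The two $\|\varXi\|_{(\alpha,\gamma)}$-seminorms are routine: from $|x_{ts}|\le\|x\|_{\alpha}|t-s|^{\alpha}$, the kernel bound $|k(\tau,s)|\lesssim|\tau-s|^{-\gamma}$, and the first-order bound \eqref{2nd} for $\varXi_{ts}^{\tau\tau'}=(k(\tau,s)-k(\tau',s))x_{ts}$, one dominates the resulting expressions by both members of the `$\wedge$' in \eqref{norm1} and \eqref{eq: two variable  function norm change} using only $|\tau-s|\ge|\tau-t|$ and $|t-s|\le|\tau-s|$ on the simplex, the $\eta$-weight $|\tau-\tau'|^{\eta}|\tau'-t|^{-\eta}$ factoring out exactly. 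For the two $\|\delta\varXi\|_{(\beta,\kappa)}$-seminorms I would first record the algebraic identities $\delta_{m}\varXi_{ts}^{\tau}=(k(\tau,s)-k(\tau,m))\,x_{tm}$ and $\delta_{m}\varXi_{ts}^{\tau\tau'}=(k(\tau,s)-k(\tau,m)-k(\tau',s)+k(\tau',m))\,x_{tm}$ for $s<m<t$. For $\|\delta\varXi\|_{(\beta,\kappa),1}$ I would estimate $|k(\tau,s)-k(\tau,m)|$ by $|\tau-m|^{-\gamma-\lambda}|m-s|^{\lambda}$ via \eqref{3rd} to reach the $|\tau-t|^{-\kappa}|t-s|^{\beta}$ member of the minimum (using $|\tau-m|\ge|\tau-t|$ and $|m-s|^{\lambda}|t-m|^{\alpha}\le|t-s|^{\beta}$), and by the crude $|\tau-m|^{-\gamma}$ (triangle inequality plus $(\mathbf{H})$) together with $|t-m|\le|\tau-m|$ to reach the $|\tau-s|^{\rho}$ member. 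For $\|\delta\varXi\|_{(\beta,\kappa),1,2}$ I would invoke the interpolated estimate \eqref{bound H} of Remark \ref{rem7}, bounding the rectangular difference of $k$ by $|\tau-\tau'|^{\eta}|\tau'-m|^{-\beta'-\gamma-\eta}|m-s|^{\beta'}$ for arbitrary $\eta,\beta'\in[0,1]$; writing $|\tau'-m|^{-\eta}\le|\tau'-t|^{-\eta}$ isolates precisely the $\eta$-weight of \eqref{dd4}, after which one takes $\beta'=\lambda$ to reach the $|\tau'-t|^{-\kappa}|t-s|^{\beta}$ member and any $\beta'\le\rho$ (so that the negative power of $|\tau'-m|$ is absorbed using $|t-m|\le|\tau'-m|$) to reach the $|\tau'-s|^{\rho}$ member. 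I expect this last, double-difference estimate to be the one genuinely delicate point of the proof, precisely because it must simultaneously accommodate both members of each `$\wedge$' and the whole range $\eta\in[0,1]$; everything else is bookkeeping with the triangle inequality. Once (i) holds, Lemma \ref{lem:(Volterra-sewing-lemma)} applies (its hypothesis $\beta-\kappa\ge\alpha-\gamma>0$ holding with equality) and yields $z_{ts}^{\tau}:=\mathcal{I}(\varXi^{\tau})_{ts}\in\mathcal{V}^{(\alpha,\gamma)}$; by part (i) of that lemma $z_{ts}^{\tau}=\lim_{|\mathcal{P}|\to0}\sum_{[u,v]\in\mathcal{P}}k(\tau,u)x_{vu}$, which is the asserted $\int_{s}^{t}k(\tau,r)\,dx_{r}$.

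For part (ii), feeding $\varXi$ into the sewing estimate \eqref{sy lemma bound} gives $|z_{ts}^{\tau}-k(\tau,s)x_{ts}|\lesssim|\tau-t|^{-\kappa}|t-s|^{\beta}\wedge|\tau-s|^{\rho}$, and it then remains to compare this minimum with the target $|\tau-t|^{-\gamma}|t-s|^{\alpha}\wedge|\tau-s|^{\rho}$. I would distinguish two regimes. If $|t-s|\le|\tau-t|$, then $|\tau-s|\asymp|\tau-t|$, the target minimum equals $|\tau-t|^{-\gamma}|t-s|^{\alpha}$, and $|\tau-t|^{-\kappa}|t-s|^{\beta}=|\tau-t|^{-\gamma}|t-s|^{\alpha}\,(|t-s|/|\tau-t|)^{\lambda}$ is no larger. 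If $|t-s|>|\tau-t|$, then $|\tau-s|\asymp|t-s|$ and both minima are comparable to $|\tau-s|^{\rho}$. This proves \eqref{regularity of z}, and adding the trivial bound $|k(\tau,s)x_{ts}|\lesssim|\tau-t|^{-\gamma}|t-s|^{\alpha}\wedge|\tau-s|^{\rho}$ gives $\|z\|_{(\alpha,\gamma),1}<\infty$.

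For part (iii), linearity of the sewing map gives $z_{ts}^{pq}=\mathcal{I}(\varXi^{p}-\varXi^{q})_{ts}$ with $(\varXi^{p}-\varXi^{q})_{ts}=(k(p,s)-k(q,s))x_{ts}$, so the sewing estimate \eqref{sy lemma upper arg bound} yields $|z_{ts}^{pq}-(k(p,s)-k(q,s))x_{ts}|\lesssim(p-q)^{\eta}(q-t)^{-\eta}((q-t)^{-\kappa}(t-s)^{\beta}\wedge(q-s)^{\rho})$, while the bound $\|\varXi\|_{(\alpha,\gamma),1,2}<\infty$ obtained in part (i) controls $|(k(p,s)-k(q,s))x_{ts}|$ by $(p-q)^{\eta}(q-t)^{-\eta}((q-t)^{-\gamma}(t-s)^{\alpha}\wedge(q-s)^{\rho})$. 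Summing and applying the regime comparison of part (ii) with $q$ in the role of $\tau$ (to replace the $\kappa$-term by the $\gamma$-term) gives exactly \eqref{eq: joint reg in z}, which completes the proof.
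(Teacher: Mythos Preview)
Your proposal is correct and follows essentially the same approach as the paper: choose $\lambda\in(1-\alpha,1-\gamma)$ (the paper's $\nu$), set $\beta=\alpha+\lambda$, $\kappa=\gamma+\lambda$, verify $\varXi\in\mathscr{V}^{(\alpha,\gamma)(\beta,\kappa)}$ via the same algebraic identities for $\delta_m\varXi$ and the kernel bounds from $(\mathbf{H})$ and Remark~\ref{rem7}, then invoke Lemma~\ref{lem:(Volterra-sewing-lemma)}. Your treatment is in fact more complete than the paper's: you spell out the $\|\varXi\|_{(\alpha,\gamma)}$-seminorms (which the paper leaves to the reader), you use \eqref{3rd} directly rather than routing through Lemma~\ref{lem:Nice inequlaity }, and you give the explicit regime comparison $|t-s|\lessgtr|\tau-t|$ needed to convert the sewing output $(\beta,\kappa)$-bound into the target $(\alpha,\gamma)$-bound in \eqref{regularity of z} and \eqref{eq: joint reg in z}, a step the paper absorbs into the sentence ``An application of Lemma~\ref{lem:(Volterra-sewing-lemma)} now yields \ldots\ the inequalities in (ii)--(iii)''.
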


\begin{rem}
According to the standard rules of algebraic integration we would be naturally prone to set $\varXi_{ts}^{\tau}=k(\tau,t)x_{ts}$. Here we have chosen to take $\varXi_{ts}^{\tau}=k(\tau,s)x_{ts}$, which  will ease the treatment of the singularity of $k$ on the diagonal. This small twist on the usual theory does not affect the fact that we are generalizing Volterra equations from the smooth to the rough case.
\end{rem}
\begin{proof}
Recall that we have set $\varXi_{ts}^{\tau}=k(\tau,s)x_{ts}$. We will show that Lemma \ref{lem:(Volterra-sewing-lemma)} may be applied to $\varXi$, which amounts to check that $\varXi\in \mathscr{V}^{(\alpha,\gamma)(\beta,\kappa)}_{2}$ with some parameters $\beta>1$  and $\kappa>0$ to be chosen later on.
 Furthermore, in order to show that $\|\varXi\|_{\mathscr{V}^{(\alpha,\gamma)(\beta,\kappa)}_{2}}<\infty$  we will focus on the norms $\|\delta\varXi\|_{(\beta,\kappa),1}$ and $\|\delta\varXi\|_{(\beta,\kappa),1,2}$ defined by \eqref{dd3} and \eqref{dd4}, and we leave the proof of $\|\varXi\|_{(\alpha,\gamma)}<\infty$ to the reader for the sake of conciseness.
 	
 	In order to check that $\|\delta \varXi\|_{(\beta,\kappa),1}<\infty $, we start by noting that the increment $\delta_{m}\varXi_{ts}^{\tau}$ can be written as $\delta_{m}\varXi_{ts}^{\tau}=\left[k(\tau,s)-k(\tau,m)\right]x_{tm}$, which stems from elementary algebraic manipulations. Therefore, according to \eqref{3rd} in Hypothesis $(\bf{H})$ we have for an additional parameter $\nu\in [0,1]$  
\begin{equation}\label{estimate}
|\delta_{m}\varXi_{ts}^{\tau}|\lesssim \|x\|_{\alpha}(\tau-m)^{-\gamma-\nu}(t-m)^{\alpha}(m-s)^{\nu}.
\end{equation}
Next we pick our parameter $\nu$  such that the condition  
\begin{align}\label{cond on nu}
\beta \equiv \nu+\alpha>1
\end{align}
is satisfied. 
As far as the singularity at $\tau$ is concerned, relation (\ref{eq: joint reg in z}) asserts that in order to apply Lemma \ref{lem:(Volterra-sewing-lemma)} item $(ii)$ we get the restriction 
\begin{equation}\label{kapa cond}
\kappa\equiv \gamma+\nu <1.
\end{equation}
Note that if we put conditions (\ref{cond on nu}) and (\ref{kapa cond}) together, we get $1-\alpha<\nu<1-\gamma$ which can be fulfilled as long as $\alpha>\gamma$. Furthermore, it is immediate that $\beta-\kappa=\alpha-\gamma$.   Then putting together (\ref{estimate}) with (\ref{cond on nu}) we get that $\|\delta \varXi\|_{(\beta,\kappa),1}<\infty $. 
Next we need to show that $\|\delta \varXi\|_{(\beta,\kappa),1,2}<\infty $. To this aim, define $g_{p}\left(q,s\right)=k\left(p,s\right)-k\left(q,s\right)$.
Then combining~\eqref{4th} and~\eqref{5th} in assumption $\left(\mathbf{H}\right)$ there exist two parameters $\eta,\theta\in [0,1]$ such that  for $p>q>t>m>s$ we have 
\begin{equation}\label{c6}
|g_{p}\left(q,m\right)-g_{p}\left(q,s\right)|\lesssim\left(p-q\right)^{\eta}
\left(q-m\right)^{-(\ga+\theta+\eta)}\left(m-s\right)^{\theta}.
\end{equation}
 With this estimate in mind, let us now define a new abstract Volterra integrand $\varXi_{ts}^{pq}=g_{p}(q,s)x_{ts}$. Repeating the computations of step $(i)$ with $(s,m,t,q,p)\in\Delta_{5}$, and applying~(\ref{bound H}) on $g$ we end up with 
 \begin{equation}\label{429}
 |\delta_{m}\varXi_{ts}^{pq}|\lesssim (p-q)^{\eta}
 (q-t)^{-\eta+\zeta}(q-m)^{-(\ga+\theta+\zeta)}(m-s)^{\theta}(t-m)^{\alpha}, 
 \end{equation}
where $\eta,\theta \in [0,1]$.  Observe that 
$(m-s)^{\theta} (t-m)^{\al} \leq (t-s)^{-\theta+\al}$. 
Thus for any $\zeta\in [0,\beta-\kappa)$  set $\kappa=\gamma+\theta+\zeta<1$ and $\beta=\theta+\alpha>1$ in the same way as in the previous step. Note that this is always possible due to the fact that $\beta-\kappa>0$.  It follows that 
\begin{equation}\nonumber
\|\delta_{m}\varXi\|_{(\beta,\kappa),1,2}<\infty.
\end{equation}
It is therefore clear that $\varXi\in \mathscr{V}^{(\alpha,\gamma),(\beta,\kappa)}$. An application of Lemma \ref{lem:(Volterra-sewing-lemma)} now yields that $\mathcal{I}(\varXi)\in \mathcal{V}^{(\alpha,\gamma)}$ and that the inequalities in (ii)-(iii) holds. 
\end{proof}

\begin{rem}
\label{rem:generality of volterra paths}Owing to Theorem \ref{thm:Regularity of Volterra path}, we now know that a typical example of a Volterra path in $\mathcal{V}^{(\alpha,\gamma)}$ is given by processes of the form  $\int_{s}^{t}k\left(\tau,r\right)dx_{r}$. Having this large class of objects in hand, we will mostly focus on computations for
general elements in $\mathcal{V}^{(\alpha,\gamma)}$ whenever it is not needed to explicitly
state the kernel $k$ or the driving noise $x$. 
\end{rem}
\subsection{Convolution product in the rough case}
\label{sec:conv-prod-rough}
As we have seen in Section \ref{subsec:convolution product}, the equivalent of Chen's relation in our Volterra context involves convolution type integrals.
 In order to clarify this point, let us go back to Remark \ref{rem14} concerning second order iterated integrals. One way to rephrase relation (\ref{relation 320})  with the operator $\delta$ introduced in (\ref{delta}) is the following 		
\begin{equation}
\delta_{s}\bz_{t0}^{\tau,2}=\int_{t>r_{2}>s}k\left(\tau,r_{2}\right)dx_{r_{2}}\otimes  \int_{s>r_{1}>0}k\left(r_{2},r_{1}\right)dx_{r_{1}},\label{eq: second order chens relation}
\end{equation}
In the right hand side of (\ref{eq: second order chens relation})
we point out that the limits of the integration with respect to $x_{r_{1}}$
are fixed; the only thing that is connecting the two integrals is the
dependence on $r_{2}$ through the kernels. Thus the integral $\int_{0}^{s}k\left(r_{2},r_{1}\right)dx_{r_{1}}$
can really be thought of as a re-scaling of the path $x$ as $ r_{2}$
moves from $s$ to $t$. Our next step is to show that this operation
is indeed valid for two generic Volterra paths $y,z$.

\begin{thm}
\label{thm: Integral product well defined}
We consider two Volterra paths
 $z\in\mathcal{V}^{\left(\alpha,\gamma\right)}$ and $y\in\mathcal{V}^{(\alpha^\prime,\gamma^\prime)}$ as given in Definition~\ref{holder norms}, where we recall that $\al,\ga,\al^\prime,\ga^\prime\in(0,1)$, and define $\rho\equiv \al-\ga>0$ and $\rho^\prime\equiv\al^\prime-\ga^\prime>0$. Then the convolution product is a bilinear operation on $\mathcal{V}^{\left(\alpha,\gamma\right)}$ given by 
\begin{equation}\label{convolution in 1d}
\text{\ensuremath{z_{tu}^{\tau}\ast y_{us}^{\cdot}}}
=
\int_{t>r>u}dz_{r}^{\tau}\otimes y_{us}^{r} :=\lim_{|\mathcal{P}|\rightarrow0}\sum_{\left[u^{\prime},v^{\prime}\right]\in\mathcal{P}} z_{v^{\prime}u^{\prime}}^{\tau}\otimes y_{us}^{u^{\prime}}.
\end{equation}
 The integral is understood as a Volterra-Young integral for all $ (s,u,t,\tau)\in \Delta_{4}$. Moreover, the 
 following inequality holds true,
\begin{equation}\label{reg of conv 25}
|z_{tu}^{\tau}\ast y_{us}^{\cdot}|\lesssim\|z\|_{(\alpha,\gamma),1}\|y\|_{(\alpha^\prime,\gamma^\prime),1,2}\left[\left(\tau-t\right)^{-\gamma}\left(t-s\right)^{\rho+\rho^\prime+\gamma}\wedge(\tau-s)^{\rho+\rho^\prime}\right].
\end{equation}
\end{thm}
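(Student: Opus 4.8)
The plan is to verify that the Riemann sums in \eqref{convolution in 1d} form a Cauchy net by applying the Volterra sewing lemma (Lemma \ref{lem:(Volterra-sewing-lemma)}) to a suitable abstract integrand, and then to read off the bound \eqref{reg of conv 25} from part (ii) of that lemma. Concretely, fix $(s,u,\tau)\in\Delta_3$ and regard the endpoint $t$ as the running variable on $[u,T]$. Define the candidate integrand on $\Delta_3([u,T])$ by $\varXi_{vw}^{\tau}:=z_{vw}^{\tau}\otimes y_{us}^{w}$ for $u\le w<v$ (with the outer parameter $\tau$ carried along, so that in the two-variable-in-$\tau$ bookkeeping one works with $\varXi_{vw}^{\tau\tau'}=z_{vw}^{\tau\tau'}\otimes y_{us}^{w}$). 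First I would compute $\delta_m\varXi_{vw}^{\tau}$: since $z_{vw}^{\tau}=z_{vm}^{\tau}+z_{mw}^{\tau}$ and the $y$-factor is frozen at its lower index, one gets $\delta_m\varXi_{vw}^{\tau}=z_{vm}^{\tau}\otimes(y_{us}^{w}-y_{us}^{m})=-\,z_{vm}^{\tau}\otimes y_{mw}^{w}$ — i.e. the germ $\delta\varXi$ only sees the \emph{increment in the upper parameter} of $y$, namely $y_{us}^{mw}=y_{us}^{m}-y_{us}^{w}$ paired with the $z$-increment $z_{vm}^{\tau}$.

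The second step is to estimate $|\delta_m\varXi_{vw}^{\tau}|$ using the norms of $z$ and $y$. From $\|z\|_{(\alpha,\gamma),1}$ we bound $|z_{vm}^{\tau}|\lesssim \|z\|_{(\alpha,\gamma),1}\,(\tau-v)^{-\gamma}(v-m)^{\alpha}$, and — this is the point where the $1$-regularity in the upper variable of a Volterra path is essential — from $\|y\|_{(\alpha',\gamma'),1,2}$ (more precisely the $\mathcal C^1$ regularity of $\rho\mapsto y_{us}^{\rho}$ off the diagonal, together with the control on $y_{us}^{mw}$) we bound $|y_{us}^{mw}|$ by something like $\|y\|_{(\alpha',\gamma'),1,2}\,(w-u)^{-\eta}(m-w)^{\,?}\big((w-u)^{-\gamma'}(u-s)^{\alpha'}\wedge(w-s)^{\rho'}\big)$; the key is that the gain in $(m-w)$ from the $y$-side is a genuine positive power (coming from the upper-variable Hölder-1 behaviour, suitably interpolated), so that $|\delta_m\varXi_{vw}^{\tau}|\lesssim (\tau-v)^{-\kappa}(v-w)^{\beta}\wedge(\tau-w)^{\beta-\kappa}$ for some $\beta>1$ and $\kappa\in(0,1)$ with $\beta-\kappa=\alpha-\gamma$ — exactly the germ condition \eqref{dd3}, and the analogous computation with the extra $\tau,\tau'$ parameters gives \eqref{dd4}. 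One has to be a little careful: the positive exponents available are $2\rho+\gamma$ in $(t-s)$ and $2\rho$ in $(\tau-s)$, which is why the right-hand side of \eqref{reg of conv 25} has that particular form; I would choose the interpolation parameters $\eta$ and $\varrho$ entering \textbf{(H)}-type estimates for $y$ so that the total $t-s$ power is $2\rho+\gamma$ and $\beta>1$ simultaneously, which is possible precisely because $\rho,\rho'>0$.

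Once $\varXi\in\mathscr V^{(\alpha,\gamma)(\beta,\kappa)}$ is established, Lemma \ref{lem:(Volterra-sewing-lemma)}(i) gives existence of the limit $\mathcal I(\varXi^{\tau})_{ts}=\lim_{|\mathcal P|\to0}\sum_{[w,v]\in\mathcal P}z_{vw}^{\tau}\otimes y_{us}^{w}$, which by definition is $z_{tu}^{\tau}\ast y_{us}^{\cdot}$; and Lemma \ref{lem:(Volterra-sewing-lemma)}(ii) gives $|z_{tu}^{\tau}\ast y_{us}^{\cdot}-\varXi_{tu}^{\tau}|\lesssim\|\delta\varXi\|_{(\beta,\kappa),1}(\,(\tau-t)^{-\kappa}(t-u)^{\beta}\wedge(\tau-u)^{\beta-\kappa})$. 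Since the "leading term" $\varXi_{tu}^{\tau}=z_{tu}^{\tau}\otimes y_{us}^{u}$ vanishes (the lower and upper indices of $y_{us}^{u}$ coincide at the base point $u$, so $y_{us}^{u}$ is itself an increment that is small — in fact in the pure one-parameter convolution one simply has $\varXi_{tu}^{\tau}=z_{tu}^{\tau}\otimes y_{us}^{u}$ and one absorbs it, or chooses the base point so that it is zero), the bound on $z_{tu}^{\tau}\ast y_{us}^{\cdot}$ itself follows, and tracking the constants yields \eqref{reg of conv 25}; bilinearity is immediate from bilinearity of the Riemann sums and uniqueness of the limit. The main obstacle I anticipate is the second step: getting the correct positive power of $(m-w)$ (equivalently, the correct $\beta>1$) out of the upper-variable regularity of $y$ while simultaneously landing on the prescribed exponents $2\rho+\gamma$ and $2\rho$ and keeping $\kappa<1$ — this is a delicate juggling of the interpolation parameters in \textbf{(H)} and in the definition of the $\mathcal V$-norms, and it is where the hypothesis $\alpha-\gamma>0$ (for both paths) is used in an essential way.
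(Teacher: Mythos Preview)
Your approach is essentially the same as the paper's: define $\varXi_{vw}^{\tau}=z_{vw}^{\tau}\otimes y_{us}^{w}$, compute $\delta_m\varXi_{vw}^{\tau}=-z_{vm}^{\tau}\otimes y_{us}^{mw}$, and estimate. But there is a real gap in how you plan to close the argument.

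The estimate you write for $\delta\varXi$ does \emph{not} fit the hypotheses of Lemma~\ref{lem:(Volterra-sewing-lemma)}. You correctly observe that the bound on $|y_{us}^{mw}|$ coming from $\|y\|_{(\alpha',\gamma'),1,2}$ carries a factor $(w-u)^{-\eta}$, but then you drop it and claim a bound of the form $(\tau-v)^{-\kappa}(v-w)^{\beta}\wedge(\tau-w)^{\beta-\kappa}$. That factor cannot be absorbed: combining the two pieces gives
\[
|\delta_m\varXi_{vw}^{\tau}|\lesssim \|z\|_{(\alpha,\gamma),1}\|y\|_{(\alpha',\gamma'),1,2}\,(\tau-v)^{-\gamma}(v-w)^{\alpha+\eta}(w-u)^{-\eta}(w-s)^{\rho'},
\]
and this has \emph{two} singularities on the integration interval $[u,t]$: one at $v\to\tau$ and a second one at $w\to u$. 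The Volterra sewing lemma as stated handles only the $\tau$-singularity. The paper does not apply Lemma~\ref{lem:(Volterra-sewing-lemma)} as a black box here; instead it redoes the sewing argument by hand for this double singularity (see the passage around \eqref{double sing riemann}--\eqref{443}), summing over dyadic partitions and controlling the resulting Riemann sum by the convergent integral $\int_u^t(\tau-a)^{-\gamma}(a-u)^{-\eta}(a-s)^{\rho'}\,da$, which requires both $\alpha+\eta>1$ and $\eta<1$. The choice $\eta=1-\alpha+\epsilon$ with $0<\epsilon<\rho$ is what makes this work, and this is where $\alpha-\gamma>0$ enters. Your plan would go through once you realize that a two-singularity variant of the sewing argument is needed here.

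A minor point: the leading term $\varXi_{tu}^{\tau}=z_{tu}^{\tau}\otimes y_{us}^{u}$ does \emph{not} vanish; $y_{us}^{u}=y_u^u-y_s^u$ is a nonzero increment. It does, however, satisfy the correct bound directly (since $|y_{us}^{u}|\lesssim(u-s)^{\rho'}$ from $\|y\|_{(\alpha',\gamma'),1}$ evaluated on the diagonal), so it can indeed be absorbed into \eqref{reg of conv 25} as you suggest in your parenthetical.
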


\begin{proof}
Define $\varXi_{r^{\prime}r}^{\tau}:=z_{r^{\prime}r}^{\tau}\otimes y_{us}^{r},$ for $0\leq s<u\leq r\le m \leq r^{\prime}\leq t$. In spirit of Lemma~\ref{lem:(Volterra-sewing-lemma)}, we will show that 
\begin{equation}\nonumber
|\mathcal{I}\left(\varXi^{\tau}\right)_{tu}-\varXi^{\tau}_{tu}|\lesssim \|z\|_{(\alpha,\gamma),1}\|y\|_{(\alpha^\prime,\gamma^\prime),1,2}\lc (\tau-t)^{-\gamma}(t-s)^{\rho+\rho^\prime+\gamma}\wedge(\tau-s)^{\rho+\rho^\prime} \rc.
\end{equation}
Following the strategy outlined in the proof of Lemma \ref{lem:(Volterra-sewing-lemma)}, we know from (\ref{a9}) that we must show that the sum $\sum_{\left[r,r^\prime\right]\in\mathcal{P}^{n}[u,t]}|\delta_{m}\varXi_{r^\prime r}^{\tau}|$ is converging (here $\mathcal{P}^n$  is the dyadic partition used in the proof of Lemma \ref{lem:(Volterra-sewing-lemma)}). Let us therefore consider the action of $\delta$  on $\varXi$. By simple algebraic manipulations we see that 
 \begin{equation}\label{delta int prod}
 \delta_{m}\varXi_{r^{\prime}r}^{\tau} = -
  z_{r^{\prime}m}^{\tau}\otimes y_{us}^{mr}.
 \end{equation}
Let us now analyse the right hand side of (\ref{delta int prod}). The term $z_{r^{\prime}m}^{\tau}$ can be bounded thanks to assumption (\ref{norm1}). We get
\begin{equation}\label{bound in prod 1}
|z_{r^{\prime}m}^{\tau}|\leq \|z\|_{(\alpha,\gamma),1} |\tau-r^{\prime}|^{-\gamma}|r^{\prime}-m|^{\alpha}.
\end{equation}
As for the term $y_{us}^{mr}$ we can use assumption (\ref{eq: two variable  function norm change}) to write 
\begin{equation}\label{bound in prod 2}
|y_{us}^{mr}|\leq \|y\|_{(\alpha^\prime,\gamma^\prime),1,2}|m-r|^{\eta} |r-u|^{-\eta} |r-s|^{\rho^\prime},
\end{equation}
for an arbitrary $\eta\in[0,1]$. Hence gathering (\ref{bound in prod 1})  and (\ref{bound in prod 2})   we bound (\ref{delta int prod}) by 
\begin{equation}\label{reg in two sing}
 | z_{r^{\prime}m}^{\tau}\otimes y_{us}^{mr}|\lesssim  \|y\|_{(\alpha^\prime,\gamma^\prime),1,2}\|z\|_{(\alpha,\gamma),1} 
 (r-u)^{-\eta}(\tau-r^{\prime})^{-\gamma}(r^{\prime}-r)^{\alpha+\eta}\,(r-s)^{\rho^\prime},
\end{equation}
where we have used the fact that $|r^{\prime}-m|\lesssim |r^\prime -r|$ and $|m-r|\lesssim |r^\prime -r|$.
 
 Combining (\ref{reg in two sing}) with (\ref{delta int prod})  and summing over the points of the dyadic partition $\mathcal{P}^n$, we end up with 
 \begin{multline}\label{double sing riemann}
 \sum_{\left[r,r^\prime\right]\in\mathcal{P}^{n}[u,t]}|\delta_{m}\varXi_{r^\prime r}^{\tau}|
 \\
 \lesssim 
 \|y\|_{(\alpha^\prime,\gamma^\prime),1,2}\|z\|_{(\alpha,\gamma),1}  \sum_{\left[r,r^\prime\right]\in\mathcal{P}^{n}[u,t]} 
 (r-u)^{-\eta}(\tau-r^{\prime})^{-\gamma}(r^{\prime}-r)^{\alpha+\eta}\,(r-s)^{\rho^\prime} .
 \end{multline}
 Note that we have two separate possible singular points above, both when $r\rightarrow u$  and $r^\prime \rightarrow \tau$.   However, taking limits in the Riemann sums on the right hand side of~(\ref{double sing riemann}), we know that we obtain a converging integral  as long as $\eta+\alpha>1$  and $\eta<1$. Indeed, the right hand side of (\ref{double sing riemann}) is bounded (up to a multiplicative constant) by the integral $|\mathcal{P}^{n}|^{\alpha+\eta-1}\int_{u}^{t}(\tau-a)^{-\gamma}(a-u)^{-\eta}(a-s)^{\rho^\prime}da$, and by doing a change of variables $a=u+\theta(t-u)$ as well as applying the inequality 
 \begin{equation*}
 \sup_{\theta\in[0,1]}(u-s+\theta(t-u))^{\rho^\prime}\leq (t-s)^{\rho^\prime},
\end{equation*} 
we find that 
 \begin{align} \label{int term1}
 \int_{u}^{t}(\tau-a)^{-\gamma}(a-u)^{-\eta}(a-s)^{\rho^\prime}da 
 \leq 
 c_{\eta,\ga} \, (\tau-u)^{-\gamma}(t-u)^{1-\eta}(t-s)^{\rho^\prime} 
 \end{align}
 where $c_{\eta,\ga}=B(1-\gamma,1-\eta)$ and we recall that $B$ stands for the Beta function as in the proof of Proposition~\ref{prop:gamma bound smooth functionals}. 
It follows that 
\begin{equation}\label{choose eta}
\sum_{\left[r,r^\prime\right]\in\mathcal{P}^{n}[u,t]}|\delta_{m}\varXi_{r^\prime r}^{\tau}|\lesssim |\mathcal{P}^{n}|^{\alpha+\eta-1} \|y\|_{(\alpha^\prime,\gamma^\prime),1,2}\|z\|_{(\alpha,\gamma),1}(\tau-u)^{-\gamma}(t-u)^{1-\eta}(t-s)^{\rho^\prime}, 
\end{equation}
 Since we must choose $\eta>1-\al$, let us choose $\eta=1-\al+\epsilon$  for some small $\epsilon>0$ satisfying $\rho-\epsilon>0$. Then inequality (\ref{choose eta}) reads
\begin{equation}\label{2choose eta}
\sum_{\left[r,r^\prime\right]\in\mathcal{P}^{n}[u,t]}|\delta_{m}\varXi_{r^\prime r}^{\tau}|\lesssim |\mathcal{P}^{n}|^{\epsilon} \|y\|_{(\alpha^\prime,\gamma^\prime),1,2}\|z\|_{(\alpha,\gamma),1}(\tau-u)^{-\gamma}(t-u)^{\alpha-\epsilon}(t-s)^{\rho^\prime}. 
\end{equation}
Note that for the dyadic partition $\mathcal{P}^n$ we have $|\mathcal{P}^{n}|^{\epsilon} =2^{-n\epsilon}(t-u)^{\epsilon}$, and observe that~(\ref{2choose eta}) is the equivalent of  (\ref{bound riemann sum volterra}) in our current setting. Therefore, one one can follow the same steps as in Lemma \ref{lem:(Volterra-sewing-lemma)} in order to get the following relation, which is the analog of \eqref{ccc}:
\begin{equation}\label{eqq1}
|\mathcal{I}^{n+1}(\varXi^{\tau})-\mathcal{I}^{n}(\varXi^{\tau})| 
\lesssim 
\frac{ \|y\|_{(\alpha^\prime,\gamma^\prime),1,2}\|z\|_{(\alpha,\gamma),1}}{2^{n\epsilon}}(\tau-u)^{-\gamma}(t-u)^{\alpha}(t-s)^{\rho^\prime}, 	
\end{equation}
where we recall that $2\alpha-\gamma=2\rho+\gamma$. We also let the patient reader check from (\ref{eqq1}) that 
\begin{equation}\label{eqq2}
|\mathcal{I}^{n+1}(\varXi^{\tau})-\mathcal{I}^{n}(\varXi^{\tau})| 
\lesssim 
\frac{ \|y\|_{(\alpha^\prime,\gamma^\prime),1,2}\|z\|_{(\alpha,\gamma),1}}{2^{n\epsilon}}\left[(\tau-t)^{-\gamma}(t-s)^{\rho+\rho^\prime+\gamma}\wedge (\tau-s)^{\rho+\rho^\prime}\right], 	
\end{equation}
where we have used that for $s\leq u\leq t\leq \tau$ the following two inequalities holds: 
\begin{equation*}
(\tau-u)^{-\gamma}(t-u)^\alpha(t-s)^{\rho^\prime}\leq (\tau-s)^{\rho+\rho^\prime} \quad \mathrm{and} \quad  (\tau-u)^{-\gamma}(t-u)^\alpha(t-s)^{\rho^\prime} \leq (\tau-t)^{-\gamma}(t-s)^{\rho+\rho^\prime+\gamma}.
\end{equation*}

 Putting together (\ref{eqq1})  and (\ref{eqq2})  and reasoning exactly as in Lemma \ref{lem:(Volterra-sewing-lemma)} after (\ref{bound riemann sum volterra}), we obtain that $\mathcal{I}^{n}(\varXi^{\tau})$ converges to an element $ \mathcal{I}(\varXi^{\tau})$
verifying 
\begin{equation}\label{443}
|\mathcal{I}(\varXi^{\tau})_{tu}-\varXi_{tu}^{\tau}| \lesssim 
\|y\|_{(\alpha^\prime,\gamma^\prime),1,2}\|z\|_{(\alpha,\gamma),1}
\lc (\tau-t)^{-\gamma}(t-s)^{\rho+\rho^\prime +\gamma}\wedge(\tau-s)^{\rho+\rho^\prime} \rc.
\end{equation}
We therefore define
$
z_{tu}^{\tau}\ast y_{us}^{\cdot}:=\mathcal{I}\left(\varXi^{\tau}\right)_{tu},$
and one can directly see from (\ref{443}) that $ z_{tu}^{\tau}\ast y_{us}^{\cdot}$ satisfies the relation
\begin{equation}\nonumber
|z_{tu}^{\tau}\ast y_{us}^{\cdot}|
\lesssim 
\|y\|_{(\alpha^\prime,\gamma^\prime),1,2}\|z\|_{(\alpha,\gamma),1}
\lc (\tau-t)^{-\gamma}(t-s)^{\rho+\rho^\prime+\gamma}\wedge(\tau-s)^{\rho+\rho^\prime} \rc. 
\end{equation}
This completes the proof. 
\end{proof}

Our next step is to mimick Proposition \ref{prop:convolutional Chens relation smooth} in a rough Volterra context. Specifically we would like to extend Theorem \ref{thm: Integral product well defined} in order to get a proper definition of the $n$-th order convolution products for Volterra rough paths (where we recall that Volterra rough paths are introduced in Definition \ref{holder norms}). For those $n$-th order convolution rough paths, we also wish to get a multiplicative property similar to Proposition \ref{prop:convolutional Chens relation smooth}. 

Observe that in order to properly define the aforementioned  $n$-th order convolution product, we will need to extend the domain of the definition of our convolution product  $\ast$. Namely, we would like to define products of the form $z_{ts}^{2,\tau}\ast f^{\cdot,\cdot}_{s}$ for a generic function $(s,\tau_1,\tau_2)\mapsto f_{s}^{\tau_2,\tau_1}$.
To motivate this construction, suppose $x:[0,T]\rightarrow \RR$ is a smooth path, and $f:\Delta_2\rightarrow \RR$ is a smooth function. Furthermore, assume that there exists $f':\Delta_3\rightarrow \RR$ and $R:\Delta_3\rightarrow \RR$ such that $f_{ts}^\tau=z_{ts}^\tau\ast f_s^{',\tau,\cdot}+R_{ts}^\tau$ for some smooth $z:\Delta_2\rightarrow \RR$. Consider the integral $\int_s^t k(\tau,r)f_rdx_r$. Inserting the relation on $f$ inside the integral, we see that 
\begin{equation*}
\int_s^t k(\tau,r)f_rdx_r=\int_s^t k(\tau,r)f_s^rdx_r+ \int_s^tk(\tau,r) z_{rs}^r\ast f_s^{',r,\cdot}dx_r+\int_s^tk(\tau,r)R_{rs}^\tau dx_r
\end{equation*}
If we assume that $z_t^\tau=\int_0^tk(\tau,r)dx_r$, we recognize that $\int_s^t k(\tau,r)f_s^rdx_r=z_{ts}^\tau\ast f_s^\cdot$, which is the first order convolution product. However, observe that for the second term we have (since all functions considered are smooth) 
\begin{equation}\label{eq:prod442}
\int_s^tk(\tau,r) z_{rs}^r\ast f_s^{',r,\cdot}dx_r=\int_s^t \int_s^r k(\tau,r)k(r,u)f_s^{',r,u}dx_udx_r.
\end{equation}
Now observe  that $\int_s^t \int_s^r k(\tau,r)k(r,u)dx_udx_r=\bz^{2,\tau}_{ts}$, and so we are tempted to define $\int_s^t \int_s^r k(\tau,r)k(r,u)f_s^{',r,u}dx_udx_r=\bz^{2,\tau}_{ts}\ast f_s^{',\cdot_1,\cdot_2}$. 
However, in the current situation the convolution product is performed over two upper variables. Hence we need to extend the construction from Theorem \ref{thm: Integral product well defined} to this context. In subsequent sections we will give a proper definition of controlled rough Volterra paths, and will then see that this is exactly the type of relations that is needed in order to define rough integrals.

 Let us first explain how a product like \eqref{eq:prod442} behaves in case  of a smooth path $x$ with a Volterra kernel $k$. Namely in this situation, consider a smooth three variable function $f:\Delta_{3}\rightarrow \mathcal{L}(E,\mathcal{L}(E))$. Then a natural way to define $z_{ts}^{2,\tau}\ast f^{\cdot,\cdot}_{s}$ is the following (the reason we assume $f$  has two upper arguments will be discussed in detail in Section~\ref{non-linear setting}).

\begin{defn}\label{def two step conv}
Let $x$  be a continuously differentiable function and consider a Volterra kernel $k$ which fulfills $\left(\bf{H}\right)$ with $\gamma<1$. Let also $f:\Delta_3\rightarrow \mathcal{L}\left(E,\mathcal{L}\left(E\right)\right)$ be a smooth function. Then for $\tau\ge t>s\ge v$ the convolution $\bz^{2,\tau}_{ts}\ast f_{v}^{\cdot_{1},\cdot_{2}}$  is defined by 
\begin{equation}\label{conv 2 step}
\bz^{2,\tau}_{ts}\ast f_{v}^{\cdot_{1},\cdot_{2}}=\int_{t>r>s}k(\tau,r)dx_{r}\otimes \int_{r>l>s}k(r,l)f_{v}^{r,l} dx_{l},
\end{equation}
where the notation $f_{v}^{\cdot_{1},\cdot_{2}}$  is introduced to prevent ambiguities about the order of integration.
\end{defn}

We now state an algebraic type lemma which will be useful in order to extend Definition~\ref{def two step conv} to rougher contexts. 

\begin{lem}
Under the same conditions as in Definition \ref{def two step conv}, let $\bz^{2,\tau}_{ts}\ast f_{s}^{\cdot_{1},\cdot_{2}}$  be the increment given by (\ref{conv 2 step}). Consider $(s,t)\in \Delta_{2}$  and a generic partition $\mathcal{P}$  of $[s,t]$. Then we have  
\begin{equation}\label{division of z2 conv}
\bz^{2,\tau}_{ts}\ast f_{s}^{\cdot_{1},\cdot_{2}}
=
\lim_{|\mathcal{P}|\to 0}\sum_{[u,v]\in \mathcal{P}} \bz^{2,\tau}_{vu}\ast f_{s}^{\cdot_{1},\cdot_{2}} 
+\left(\delta_{u}\bz^{2,\tau}_{vs}\right)\ast f_{s}^{\cdot_{1},\cdot_{2}}. 
\end{equation} 
\end{lem}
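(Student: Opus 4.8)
The plan is to establish the stronger, \emph{exact} identity
\[
\bz^{2,\tau}_{ts}\ast f_{s}^{\cdot_{1},\cdot_{2}}
=\sum_{[u,v]\in\mathcal{P}}\Bigl(\bz^{2,\tau}_{vu}\ast f_{s}^{\cdot_{1},\cdot_{2}}+\bigl(\delta_{u}\bz^{2,\tau}_{vs}\bigr)\ast f_{s}^{\cdot_{1},\cdot_{2}}\Bigr)
\]
for \emph{every} partition $\mathcal{P}$ of $[s,t]$. Since the right-hand side is then independent of $\mathcal{P}$, passing to the limit $|\mathcal{P}|\to0$ gives (\ref{division of z2 conv}) for free. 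Because $x\in C^{1}$ and $\gamma<1$, every object appearing above is an absolutely convergent Riemann--Stieltjes integral over an explicit sub-simplex of $[s,t]^{2}$, so the whole argument reduces to the additivity of the integral with respect to its domain.

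First I would identify the three objects as integrals over explicit sub-simplices, all with the common integrand $w(r,l):=k(\tau,r)\,k(r,l)\,f_{s}^{r,l}$, tensored so that the outer integration variable $r$ sits in the left slot, consistently with (\ref{conv 2 step}). By Definition~\ref{def two step conv} with lower index $s$, the left-hand side of (\ref{division of z2 conv}) is the integral of $w$ over $\{s<l<r<t\}$; likewise $\bz^{2,\tau}_{vu}\ast f_{s}^{\cdot_{1},\cdot_{2}}$ is the integral of $w$ over $\{u<l<r<v\}$. For the last term I would use the smooth Chen-type relation (\ref{relation 320}), equivalently (\ref{eq:sim}), to obtain $\delta_{u}\bz^{2,\tau}_{vs}=\bz^{1,\tau}_{vu}\ast\bz^{1,\cdot}_{us}=\int_{u<r<v}k(\tau,r)\,dx_{r}\otimes\int_{s<l<u}k(r,l)\,dx_{l}$; inserting the weight $f_{s}^{r,l}$ into this integrand exactly as prescribed by the convolution $\ast f_{s}^{\cdot_{1},\cdot_{2}}$ then shows that $(\delta_{u}\bz^{2,\tau}_{vs})\ast f_{s}^{\cdot_{1},\cdot_{2}}$ equals the integral of $w$ over $\{s<l<u<r<v\}$.

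Next I would decompose the simplex $\{s<l<r<t\}$. Writing $[u,v]$ for a generic cell of $\mathcal{P}$, the portion of this simplex with $r\in(u,v)$ is, up to the Lebesgue-null sets $\{r=l\}$ and $\{l=u\}$, the disjoint union of $\{u<l<r<v\}$ and $\{s<l<u<r<v\}$. Integrating $w$ over these two pieces produces, by the previous step, $\bz^{2,\tau}_{vu}\ast f_{s}^{\cdot_{1},\cdot_{2}}$ and $(\delta_{u}\bz^{2,\tau}_{vs})\ast f_{s}^{\cdot_{1},\cdot_{2}}$ respectively. Summing over the cells $[u,v]\in\mathcal{P}$ exhausts $\{s<l<r<t\}$, and additivity of the integral with respect to its domain yields the displayed identity, hence the lemma after taking $|\mathcal{P}|\to0$.

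The only delicate point, and where I would spend most of the care, is the identification carried out in the second step: one has to check that inserting $f$ into the factorised expression for $\delta_{u}\bz^{2,\tau}_{vs}$ --- in which the outer variable $r$ enters the right-hand factor both through $k(r,l)$ and through $f_{s}^{r,l}$ --- is consistent with Definition~\ref{def two step conv}. This is precisely the purpose of the two-superscript notation $f_{s}^{\cdot_{1},\cdot_{2}}$, which records that $\cdot_{1}$ is bound to the outer integration variable and $\cdot_{2}$ to the inner one. Once this bookkeeping is fixed, the remaining steps are routine, the finiteness of all the integrals being guaranteed by $\gamma<1$ together with the smoothness of $x$ and $f$.
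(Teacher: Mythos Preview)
Your proposal is correct and follows essentially the same route as the paper: both arguments write the left-hand side as an iterated integral over $\{s<l<r<t\}$, split first according to which partition cell $[u,v]$ contains $r$, then split the inner domain $\{s<l<r\}$ into $\{u<l<r\}$ and $\{s<l<u\}$, and identify the two resulting pieces as $\bz^{2,\tau}_{vu}\ast f_{s}^{\cdot_{1},\cdot_{2}}$ and $(\delta_{u}\bz^{2,\tau}_{vs})\ast f_{s}^{\cdot_{1},\cdot_{2}}$ respectively. Your observation that the identity holds exactly for every partition (so the limit is superfluous) is also implicit in the paper's argument.
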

\begin{proof}
Starting from \eqref{conv 2 step}, we first write 
\begin{equation*}
\bz^{2,\tau}_{ts}\ast f_{s}^{\cdot_{1},\cdot_{2}}
=
\sum_{[u,v]\in \mathcal{P}}
\int_{v>r>u}k(\tau,r)dx_{r}\otimes \int_{r>l>s}k(r,l)f_{s}^{r,l} dx_{l}.
\end{equation*}
Then for each $[u,v]\in \mathcal{P}$, divide the region $\{v>r>u \} \cap \{r>l>s \}$
into 
\begin{equation*}
\{v>r>l>u\}\cup\{v>r>u>l>s\}. 
\end{equation*}
This yields a decomposition of 
$\bz^{2,\tau}_{ts}\ast f_{s}^{\cdot_{1},\cdot_{2}}$  of the form 
\begin{equation}\label{AB eq}
\bz^{2,\tau}_{ts}\ast f_{s}^{\cdot_{1},\cdot_{2}}=\sum_{[u,v]\in \mathcal{P}} A^{\tau}_{vu}+B^{\tau}_{vu},
\end{equation}
where $A$  and $B$  are respectively given by  
\begin{align*}
A_{vu}^{\tau}=\int_{v>r>u}k(\tau,r)dx_{r}\otimes \int_{r>l>u}k(r,l)f_{s}^{r,l}dx_{l} 
\\
B_{vu}^{\tau}=\int_{v>r>u}k(\tau,r)dx_{r}\otimes \int_{u>l>s}k(r,l)f_{s}^{r,l}dx_{l}. 
\end{align*}
Now we immediately recognize the term $A_{vu}^\tau$  as the expression $\bz^{2,\tau}_{vu}\ast f_{s}^{\cdot_{1},\cdot_{2}}$ given by (\ref{conv 2 step}). Moreover, it is also readily checked that $B_{vu}^{\tau}=z^{1,\tau}_{vu}\ast \left(z^{1,\cdot_1}_{us}\ast f_{s}^{\cdot_1,\cdot_2 }\right)$. Hence  thanks to relation~(\ref{con con}) for smooth paths we can also write 
\begin{equation}\nonumber
B_{vu}^{\tau}=\left(\delta_{u}\bz^{2,\tau}_{vs}\right)\ast f_{s}^{\cdot_1,\cdot_2}.
\end{equation}
Plugging this relation into (\ref{AB eq}) and gathering the information we have on the term $A^\tau$, our proof is complete. 
\end{proof}

\begin{rem}
The identity (\ref{division of z2 conv}) makes sense as long as one can define $\bz^{2,\tau}\ast f^{\cdot_1,\cdot_2}$  and if $\bz^{2}$ verifies~(\ref{con con}). This opens the way to a generalization to rougher situations, having Theorem~\ref{thm: Integral product well defined} in  mind for the equivalent of (\ref{con con}). These considerations motivate the definition in Theorem~\ref{thm:genreal volterra convolution }. 
\end{rem}

We now take another step towards a proper definition of general convolution products. To this aim, we will assume for a moment that our generic Volterra path $z^{\tau}$  gives raise to a stack $\{\bz^{j,\tau};j\leq n\}$  of iterated integrals. Specifically our standing assumption is the following: 
\begin{description}[wide, labelwidth=!, labelindent=0pt]
\item[H2] Let $z\in\mathcal{V}^{(\alpha,\gamma)}$ be a Volterra path, as introduced in Definition \ref{holder norms}. For $n$ such that $(n+1)\rho+\ga>1$, we assume that there exists a family $\{\bz^{j,\tau};j\leq n\}$ with $\bz^1=z$ satisfying
\begin{equation}\label{eq: hyp delta rel}
\delta_{u}\bz^{j,\tau}_{ts}=\sum_{i=1}^{j-1}\bz^{j-i,\tau}_{tu}\ast\bz^{i,\cdot}_{us},
\end{equation}
where the convolution product is defined by the right hand side of (\ref{convolution in 1d}). In addition, we suppose that for $j=1,\ldots,n$ we have $\bz^{j}\in \mathcal{V}^{(j\rho+\gamma,\gamma)}(\Delta_3,E)$.
\end{description}

Let us also specify the kind of norm we shall consider for processes with 2 upper variables of the form $y^{\cdot_1,\cdot_2}$.

\begin{defn}\label{def: upper vb space}
Let $y$ be a function from $\Delta_3$ to $V$ such that for any $(\tau_1,\tau_2)\in \Delta_2$ we have $y_0^{\tau_1,\tau_2}=y_0\in V$, and such that 
\begin{equation}\label{upper vb norm}
\|y^{\cdot_1,\cdot_2}\|_{(\alpha,\gamma),1,2}:=\|y^{\cdot_1,\cdot_2}\|_{(\alpha,\gamma),1,2,>} +\|y^{\cdot_1,\cdot_2}\|_{(\alpha,\gamma),1,2,<} <\infty
\end{equation}
where the two norms $\|\cdot\|_{(\alpha,\gamma),2,>}$ and $\|\cdot\|_{(\alpha,\gamma),2,<}$ are small variations of \eqref{eq: two variable  function norm change}, respectively defined by
\begin{align}\label{>}
\|y^{\cdot_1,\cdot_2}\|_{(\alpha,\gamma),1,2,>} =\sup_{\substack{(s,t,r^\prime,r_1,r_2)\in \Delta_5\\ \eta\in [0,1],\, \zeta\in [0,\alpha-\gamma)}} \frac{|y_{ts}^{r^\prime,r_2}-y_{ts}^{r^\prime,r_1}|}{|r_2-r_1|^{\eta}|r_1-t|^{-\eta+\zeta}\left[|r_1-t|^{-\gamma-\zeta}|t-s|^\alpha\wedge |r_1 -s|^{\alpha-\gamma-\zeta}\right]}
\\\label{<}
\|y^{\cdot_1,\cdot_2}\|_{(\alpha,\gamma),1,2,<} =\sup_{\substack{(s,t,r_1,r_2,r^\prime)\in \Delta_5\\ \eta\in [0,1],\,\zeta\in [0,\alpha-\gamma)}} \frac{|y_{ts}^{r_2,r^\prime}-y_{ts}^{r_1,r^\prime}|}{|r_2-r_1|^{\eta}|r_1-t|^{-\eta+\zeta}\left[|r_1-t|^{-\gamma-\zeta}|t-s|^\alpha\wedge |r_1 -s|^{\alpha-\gamma-\zeta}\right]},
\end{align}
We denote the space of functions such that \eqref{upper vb norm} is fulfilled by $\mathcal{V}^{\cdot_1,\cdot_2}_{(\alpha,\gamma)}$. 
\end{defn}
\begin{rem}\label{norm_rem}
In the sequel we will need to estimate differences of functions $y^{\cdot_1,\cdot_2}:\Delta_3 \rightarrow V$ the form $|y_s^{\tau,v}-y_s^{\tau,u}|\lesssim |v-u|^\eta|u-s|^{-\eta}$ uniformly over $\tau$ and $s$.
 However, if $y\in \mathcal{V}^{\cdot_1,\cdot_2}_{(\alpha,\gamma)}$, it is readily checked that 
\begin{align*}
|y_s^{\tau,v}-y_s^{\tau,u}|&\leq |y_0^{\tau,v}-y_0^{\tau,u}|+|y_{s0}^{\tau,v}-y_{s0}^{\tau,u}|
\\
&\leq \|y^{\cdot_1,\cdot_2}\|_{(\alpha,\gamma),1,2,>}|v-u|^\eta|u-s|^{-\eta}|s-0|^{\alpha-\gamma}
\end{align*}
where we have used that fact that since $y\in\mathcal{V}^{\cdot_1,\cdot_2}_{(\alpha,\gamma)}$ the difference $|y_0^{\tau,v}-y_0^{\tau,u}|=0$. Thus, we can use the norm in \eqref{>} to control the increments $y_s^{\tau,v}-y_s^{\tau,u}$. The same can of course be done for increments in the first variable, using the norm in \eqref{<}. 
\end{rem}

Assuming Hypothesis $(\bf{H2})$, and having Definition \ref{def: upper vb space} in mind,  we now state a general convolution result for functions defined on $\Delta_3$.  

\begin{thm}\label{thm:genreal volterra convolution }
Let $z\in\mathcal{V}^{(\alpha,\gamma)}$ with $\alpha,\gamma\in(0,1)$  satisfying $\rho=\alpha-\gamma>0$, as given in Definition~\ref{holder norms}. We assume that $z$  fulfills hypothesis $(\bf{H2})$.   Consider a function  $y:\Delta_3\rightarrow \mathcal{L}(E,V)$ such that $y$ is in the space $\mathcal{V}^{\cdot_1,\cdot_2}_{(\alpha,\gamma)}$ given in Definition \ref{def: upper vb space}.
Then we have for all fixed $(s,t,\tau)\in \Delta_{3}$ that 
\begin{equation}
\label{def of conv}
\bz_{ts}^{2,\tau}\ast y^{\cdot_{1},\cdot_{2}}_{s}:=\lim_{|\mathcal{P}|\rightarrow0}\sum_{\left[u,v\right]\in\mathcal{P}} \bz_{vu}^{2,\tau}\otimes y^{u,u}_{s} +(\delta_{u}\bz_{vs}^{2,\tau}) \ast y^{\cdot_{1},\cdot_{2}}_{s}
\end{equation}
 is a well defined Volterra-Young integral. It follows that $\ast$  is a  well defined bilinear operation between the three parameters Volterra function $\bz^{2}$  and a $3$-parameter path $y$.   Moreover, we have that 
\begin{multline}\label{eq:reg of conv product}
|\bz_{ts}^{2,\tau}\ast y^{\cdot_{1},\cdot_{2}}_{s}- \bz_{ts}^{2,\tau }\otimes y^{s,s}_{s}|
\lesssim 
\|y^{\cdot_{1},\cdot_{2}} \|_{(\alpha,\gamma),1,2}  
\left(\|\bz^{2}\|_{(2\rho+\gamma,\gamma),1}+\|\bz^{1}\|_{(\alpha,\gamma),1,2}\|\bz^{1}\|_{(\alpha,\gamma),1}\right)
\\
\times\lp |\tau-s|^{-\gamma}|t-s|^{2\rho+\gamma}\wedge|\tau-s|^{2\rho} \rp.
\end{multline}
\end{thm}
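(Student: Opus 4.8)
The plan is to follow the strategy of the proof of Theorem~\ref{thm: Integral product well defined}: realize the right-hand side of \eqref{def of conv} as the limit of Riemann sums of a suitable germ, and control it by a dyadic-partition argument. Fix $(s,t,\tau)$ and, for $s\le u<v\le t$, put
\[
\varXi_{vu}^{\tau}:=\bz_{vu}^{2,\tau}\otimes y_{s}^{u,u}+\big(\delta_{u}\bz_{vs}^{2,\tau}\big)\ast y_{s}^{\cdot_{1},\cdot_{2}},
\]
where the last convolution makes sense through the products already constructed in Theorem~\ref{thm: Integral product well defined} together with the Chen-type relation $\delta_{u}\bz_{vs}^{2,\tau}=\bz_{vu}^{1,\tau}\ast\bz_{us}^{1,\cdot}$ furnished by $(\mathbf{H2})$. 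Since $\delta_{s}\bz_{ts}^{2,\tau}=0$, we have $\varXi_{ts}^{\tau}=\bz_{ts}^{2,\tau}\otimes y_{s}^{s,s}$, so the sums in \eqref{def of conv} are exactly $\mathcal{I}^{n}(\varXi^{\tau})_{ts}=\sum_{[u,v]\in\mathcal{P}^{n}}\varXi_{vu}^{\tau}$ in the notation of Lemma~\ref{lem:(Volterra-sewing-lemma)}. It therefore remains to show that $\big\{\mathcal{I}^{n}(\varXi^{\tau})_{ts}\big\}_{n}$ is Cauchy and that its limit $\mathcal{I}(\varXi^{\tau})$ obeys the bound $|\mathcal{I}(\varXi^{\tau})_{ts}-\bz_{ts}^{2,\tau}\otimes y_{s}^{s,s}|\lesssim(\ldots)$ of \eqref{eq:reg of conv product}; bilinearity of $\ast$ is then immediate, and combining with $|\varXi_{ts}^{\tau}|\lesssim\|\bz^{2}\|_{(2\rho+\gamma,\gamma),1}\|y\|(|\tau-t|^{-\gamma}|t-s|^{2\rho+\gamma}\wedge|\tau-s|^{2\rho})$ from $(\mathbf{H2})$ also gives $\bz^{2}\ast y\in\mathcal{V}^{(2\rho+\gamma,\gamma)}$.

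As in the proof of Lemma~\ref{lem:(Volterra-sewing-lemma)} one has $\mathcal{I}^{n+1}(\varXi^{\tau})_{ts}-\mathcal{I}^{n}(\varXi^{\tau})_{ts}=\sum_{[u,v]\in\mathcal{P}^{n}}\delta_{m}\varXi_{vu}^{\tau}$ with $m=(u+v)/2$, so the crux is the estimate of $\delta_{m}\varXi_{vu}^{\tau}$. Using $\delta_{m}\bz_{vu}^{2,\tau}=\bz_{vm}^{1,\tau}\ast\bz_{mu}^{1,\cdot}$ and the algebraic manipulation behind \eqref{division of z2 conv} (which only uses that $\bz^{2}$ satisfies a Chen relation), a direct computation yields for $s\le u<m<v\le t$
\begin{multline*}
\delta_{m}\varXi_{vu}^{\tau}=\bz_{vm}^{2,\tau}\otimes\big(y_{s}^{u,u}-y_{s}^{m,m}\big)\\
-\big[(\delta_{m}\bz_{vu}^{2,\tau})\ast y_{s}^{\cdot_{1},\cdot_{2}}-(\delta_{m}\bz_{vu}^{2,\tau})\otimes y_{s}^{u,u}\big].
\end{multline*}
For the first term I would split $y_{s}^{u,u}-y_{s}^{m,m}=(y_{s}^{u,u}-y_{s}^{m,u})+(y_{s}^{m,u}-y_{s}^{m,m})$ and bound the two one‑argument increments by the seminorms $\|y^{\cdot_1,\cdot_2}\|_{(\alpha,\gamma),2,<}$ and $\|y^{\cdot_1,\cdot_2}\|_{(\alpha,\gamma),2,>}$ of Definition~\ref{def: upper vb space} (with a free exponent $\eta$), and use $|\bz_{vm}^{2,\tau}|\lesssim\|\bz^{2}\|_{(2\rho+\gamma,\gamma),1}|\tau-v|^{-\gamma}|v-m|^{2\rho+\gamma}$ from $(\mathbf{H2})$; this gives a summand of type $|\tau-v|^{-\gamma}|v-m|^{2\rho+\gamma+\eta}|u-s|^{-\eta}$. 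For the second term — the ``mixed remainder'' — I would use $\delta_{m}\bz_{vu}^{2,\tau}=\bz_{vm}^{1,\tau}\ast\bz_{mu}^{1,\cdot}$ and recognize the bracket as the difference between the convolution $(\bz^{1}\ast\bz^{1})\ast y$ and its germ $(\bz^{1}\ast\bz^{1})\otimes y_{s}^{u,u}$; iterating the remainder estimate of Theorem~\ref{thm: Integral product well defined} and using the seminorms of $y$ to absorb $y_{s}^{r,l}-y_{s}^{u,u}$, this term is bounded (up to an arbitrarily small loss in the exponent, realized by the choice $\eta=\rho$) by $|\tau-v|^{-\gamma}|v-u|^{3\rho+\gamma}|u-s|^{-\eta}$.

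The decisive point is that the case $n=2$ of $(\mathbf{H2})$ forces $3\rho+\gamma>1$, and likewise $2\rho+\gamma+\eta>1$ for $\eta$ close to $1$; hence in both summands the power of $|v-u|$ exceeds $1$, while the sole remaining singular factor $|u-s|^{-\eta}$ ($\eta<1$) is integrable at the base point $s$. Consequently, exactly as in \eqref{bound3}–\eqref{int term1} and \eqref{2choose eta}–\eqref{443}, a change of variables producing a Beta‑function factor and the bound $\int_{s}^{t}|\tau-r|^{-\gamma}|r-s|^{-\eta}dr\lesssim(\tau-s)^{-\gamma}(t-s)^{1-\eta}$ give $\sum_{[u,v]\in\mathcal{P}^{n}}|\delta_{m}\varXi_{vu}^{\tau}|\lesssim|\mathcal{P}^{n}|^{\varepsilon}(\text{norms})\,(\tau-s)^{-\gamma}(t-s)^{2\rho+\gamma}$ for some $\varepsilon>0$. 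This yields geometric decay of $\mathcal{I}^{n+1}(\varXi^{\tau})-\mathcal{I}^{n}(\varXi^{\tau})$, hence the convergence in \eqref{def of conv} and the bound $|\mathcal{I}(\varXi^{\tau})_{ts}-\varXi_{ts}^{\tau}|\lesssim(\text{norms})\,(\tau-s)^{-\gamma}(t-s)^{2\rho+\gamma}$; since $(\tau-s)^{-\gamma}(t-s)^{2\rho+\gamma}\le(\tau-s)^{2\rho}$, this is precisely \eqref{eq:reg of conv product}, with the displayed product of seminorms of $\bz^{1},\bz^{2},y$ collected along the way. The same computation applied to the $\tau$‑increment $\delta_{m}\varXi_{vu}^{\tau\tau'}$, using the ``$1,2$'' seminorms of $\bz^{1},\bz^{2}$ and of $y$, controls the behaviour in the upper variable and completes $\bz^{2}\ast y\in\mathcal{V}^{(2\rho+\gamma,\gamma)}$. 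The main obstacle is the sharp control of the mixed remainder $(\delta_{m}\bz^{2,\tau}_{vu})\ast y-(\delta_{m}\bz^{2,\tau}_{vu})\otimes y_{s}^{u,u}$, which is exactly where the threshold $3\rho+\gamma>1$ is needed; a secondary difficulty is the systematic bookkeeping of the diagonal singularity $|\tau-\cdot|^{-\gamma}$ and of the minima appearing in all the seminorms.
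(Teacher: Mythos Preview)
Your approach is the paper's: same germ $\varXi_{vu}^{\tau}$, same algebraic decomposition of $\delta_m\varXi_{vu}^{\tau}$ (the paper writes it as \eqref{delta rule}--\eqref{delta two rule} and then combines via \eqref{four sixty two}, which is exactly your two pieces), and the same dyadic sewing argument to conclude. The one place you diverge is the bound on the mixed remainder $\bz_{vm}^{1,\tau}\ast\bz_{mu}^{1,\cdot}\ast(y_{s}^{\cdot_1,\cdot_2}-y_{s}^{u,u})$. You claim it is of order $|\tau-v|^{-\gamma}|v-u|^{3\rho+\gamma}|u-s|^{-\eta}$ and call $3\rho+\gamma>1$ the ``decisive point''. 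The paper instead gets the \emph{same} exponent as for the first piece, namely $|\tau-v|^{-\gamma}|v-u|^{2\rho+\gamma+\eta}|u-s|^{-\eta}$ with a free $\eta<1$ (see \eqref{reg con con y}): one simply uses that $|y_{s}^{p,l}-y_{s}^{u,u}|\lesssim\|y^{\cdot_1,\cdot_2}\|_{(\alpha,\gamma),2}\,|p-u|^{\eta}|u-s|^{-\eta}$ for $u\le l\le p$ and feeds this $\eta$-gain directly into the regularity of $\bz^{1}\ast\bz^{1}$, rather than ``iterating'' a remainder estimate. The required threshold is then only $2\rho+\gamma+\eta>1$ with $\eta<1$, which holds for every $\rho>0$. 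Your version still closes in the paper's main regime (where $(\mathbf{H2})$ is invoked with $n=2$, so $3\rho+\gamma>1$), but it is not sharp and it misidentifies where the constraint actually enters. Apart from this one estimate, your sketch matches the paper's proof.
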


\begin{rem}
Our definition (\ref{def of conv}) for $\bz_{ts}^{2,\tau}\ast y^{\cdot_{1},\cdot_{2}}_{s}$ is obviously motivated by  (\ref{division of z2 conv}), which had been obtained for smooth Volterra paths. We are now extending this identity to a generic path in $\mathcal{V}^{(\alpha,\gamma)}$. 
\end{rem}
\begin{rem}
The term $(\delta_{u}\bz_{vs}^{2,\tau}) \ast y^{\cdot_{1},\cdot_{2}}_{s}$ in (\ref{conv 2 step}) is defined in the following way: observe that according to (\ref{eq: hyp delta rel}) we have 
\begin{equation}\label{e1}
\delta_{u}\bz_{vs}^{2,\tau}=\bz^{1,\tau}_{vu}\ast\bz^{1,\cdot}_{us}.
\end{equation}
Therefore we get 
\begin{equation}\nonumber
(\delta_{u}\bz_{vs}^{2,\tau})\ast y_{s}^{\cdot_1,\cdot_2} =\bz^{1,\tau}_{vu}\ast\bz^{1,\cdot}_{us} \ast y_{s}^{\cdot_1,\cdot_2},
\end{equation}
which is well defined from a successive application of Theorem \ref{thm: Integral product well defined}. Indeed, the convolution $\bz^{1,p}_{ts}\ast y^{r,\cdot}_{s}$ for $p\geq r$ can be constructed in the exact same way as we constructed $\bz^{1,\tau}_{vu}\ast\bz^{1,\cdot}_{us}$. Namely, $y^{\cdot_1,\cdot_2}_s$ has to be considered as a constant in the lower variable.  However, in light of Remark \ref{norm_rem}, the $\|y\|_{(\alpha,\gamma),1,2}$ norm invoked in \eqref{reg of conv 25} will be changed to the regularity required in \eqref{upper vb norm}.
\end{rem}

\begin{proof}[Proof of Theorem \ref{thm:genreal volterra convolution }]
Let us denote by $\mathcal{I}_{\mathcal{P}}$ the approximation of the right hand side of~\eqref{def of conv}, that is 
\begin{equation} \label{Riemann sums2}
\mathcal{I}_{\mathcal{P}}:=\sum_{\left[u,v\right]\in\mathcal{P}} \varXi_{vu}^{\tau}:=\sum_{\left[u,v\right]\in\mathcal{P}}\bz_{vu}^{2,\tau}\otimes y^{u,u}_{s}+(\delta_{u}\bz_{vs}^{2,\tau}) \ast y_{s}^{\cdot_{2},\cdot_{1}}.
\end{equation}
 Our goal is to apply Lemma \ref{lem:(Volterra-sewing-lemma)}  to the increment $\varXi$, and we  must therefore check the regularity of the integrand under the action of $\delta$. To this aim, two simple computations using that $\delta_{r} \bz_{vu}^{2,\tau}=\bz_{vr}^{1,\tau}\ast \bz_{ru}^{1,\cdot} $ reveal 
\begin{align}\label{delta rule}
\delta_{r} (\bz_{vu}^{2,\tau}\otimes y^{u,u}_{s}) &=-\bz_{vr}^{2,\tau}\otimes (y^{r,r}_{s}-y^{u,u}_{s})+ \bz_{vr}^{1,\tau}\ast \bz_{ru}^{1,\cdot}\otimes y^{u,u}_{s} ,
\\\label{delta two rule}
\delta_{r}((\delta_{u}\bz_{vs}^{2,\tau}) \ast y^{\cdot_{1},\cdot_{2}}_{s}) 
&=-\bz_{vr}^{1,\tau}\ast \bz_{ru}^{1,\cdot} \ast y^{\cdot_{1},\cdot_{2}}_{s}, 
\end{align}
where we notice that (since we are computing $\delta_r\varXi_{vu}^\tau$) we have 
\begin{equation*}
\delta_r\left(\delta_u \bz^{2,\tau}_{vs}\right)=\delta_u \bz^{2,\tau}_{vs}-\delta_r \bz^{2,\tau}_{vs}-\delta_u \bz^{2,\tau}_{rs}
=-\bz_{vr}^{1,\tau}\ast \bz_{ru}^{1,\cdot},
\end{equation*}
where we invoked \eqref{e1} for the last identity. 
 Let us now analyse the regularities of the terms in \eqref{delta rule}-\eqref{delta two rule}, starting with the right hand side of \eqref{delta rule}. 
 Namely we recall that we assume in hypothesis $(\bf{H2})$  that $\bz^2\in\mathcal{V}^{(2\rho+\gamma,\gamma)}$, and we also have $\|y^{\cdot_1,\cdot_2}\|_{(\alpha,\gamma),1,2}<\infty$ according to~\eqref{upper vb norm}. Therefore recalling~\eqref{>},~\eqref{<} and Remark \ref{norm_rem},   and also recalling that $u\leq r\leq v$   we  have  for all $\eta\in[0,1]$
\begin{equation}\label{reg z2 con y}
| \bz_{vu}^{2,\tau}\otimes (y^{r,r}_{s}-y^{u,u}_{s}) |\lesssim \|y^{\cdot_{1},\cdot_{2}} \|_{(\alpha,\gamma),1,2} \|\bz^{2}\|_{(2\rho+\gamma,\gamma),1}|u-s|^{-\eta}|\tau-v|^{-\gamma}|v-u|^{2\rho+\gamma+\eta}, 
\end{equation} We then choose $\eta$  such that  $2\rho+\gamma+\eta>1$, at the same time as $\eta<1$, which is always possible, since $\rho>0$.

In order to treat the remaining terms in (\ref{delta rule}) and (\ref{delta two rule}), observe that formula (\ref{convolution in 1d}) trivially yields (recall again that $y_s^{u,u}$ has to be considered as a constant  in the lower variable)
\begin{equation*}
\bz^{1,\tau}_{ts}\ast y_{s}^{u,u}=\bz^{1,\tau}_{ts}\otimes y_{s}^{u,u}.
\end{equation*}
 Therefore we can gather our two remaining terms into 
\begin{equation}\label{four sixty two}
 \bz_{vr}^{1,\tau}\ast \bz_{ru}^{1,\cdot}\otimes y^{u,u}_{s}-\bz_{vr}^{1,\tau}\ast \bz_{ru}^{1,\cdot} \ast y^{\cdot_{1},\cdot_{2}}_{s}=-\bz_{vr}^{1,\tau}\ast \bz_{ru}^{1,\cdot} \ast (y^{\cdot_{1},\cdot_{2}}_{s}-y^{u,u}_{s}).
\end{equation}
Now in the spirit of Theorem \ref{thm: Integral product well defined}, Inequality \eqref{reg of conv 25}  and using condition~\eqref{upper vb norm} as well as relation \eqref{reg z2 con y}, we  have  
\begin{multline}\label{reg con con y}
|\bz_{vr}^{1,\tau}\ast \bz_{ru}^{1,\cdot} \ast (y^{\cdot_{1},\cdot_{2}}_{s}-y^{u,u}_{s})|
\\
\lesssim 
\|y^{\cdot_{1},\cdot_{2}} \|_{(\alpha,\gamma),1,2}\|\bz^{1}\|_{(\alpha,\gamma),1} 
\|\bz^{1}\|_{(\alpha,\gamma),1,2}|\tau-v|^{-\gamma}|v-u|^{2\rho+\gamma+\eta}|u-s|^{-\eta}.
\end{multline} 

Notice that the regularity obtained in (\ref{reg con con y}) is the same as for (\ref{reg z2 con y}). Hence repeating the same arguments as after (\ref{reg z2 con y}) and recalling (\ref{delta rule}) and  (\ref{delta two rule}), we have obtained that 
\begin{equation}\nonumber
|\delta_{r}\varXi_{vs}^\tau|\lesssim c_{y,\bz}|\tau-v|^{-\gamma}|u-s|^{-\eta}|v-u|^\mu,  
\end{equation}
where $\eta<1$  and $\mu=2\rho+\gamma+\eta>1$, and where the constant $c_{y,\bz}$  is the same as in the right hand side of (\ref{reg con con y}). 

We are now in a situation which is similar to the one we had encountered in the proof of Theorem \ref{thm: Integral product well defined} (see inequality~\eqref{double sing riemann} in particular). Thus along the same lines as Theorem~\ref{thm: Integral product well defined}, resorting to a slight modification of the Sewing lemma \ref{lem:(Volterra-sewing-lemma)} involving two possible singularities, we get that the Riemann sums defined by \eqref{Riemann sums2} converge as $|\mathcal{P}|\rightarrow 0$, and we define 
\begin{equation}\label{four sixty six}
\bz_{ts}^{2,\tau}\ast y^{\cdot_{1},\cdot_{2}}:=\lim _{|\mathcal{P}|\rightarrow 0} \mathcal{I}_{\mathcal{P}}.
\end{equation}

In order to check \eqref{eq:reg of conv product}, let us apply inequality \eqref{sy lemma bound} to the increment $\varXi^\tau$ defined in Equation \eqref{Riemann sums2}. To this aim, observe that taking $v=t$  and $u=s$  in the definition of $\varXi^\tau$ we get $\delta_s\bz^{2,\tau}_{ts}=0$, and thus $\varXi_{ts}^\tau=\bz^{2,\tau}_{ts}\otimes y_{s}^{s,s} $. In addition, we have just seen in~(\ref{four sixty six}) that $\mathcal{I}\left(\varXi^\tau\right)_{ts}=\bz_{ts}^{2,\tau}\ast y^{\cdot_{1},\cdot_{2}}$, and thus 
\begin{equation}\nonumber
\mathcal{I}\left(\varXi^\tau\right)_{ts}-\varXi_{ts}^\tau=\bz_{ts}^{2,\tau}\ast y^{\cdot_{1},\cdot_{2}}-\bz^{2,\tau}_{ts}\otimes y_{s}^{s,s}.
\end{equation}
Our claim (\ref{eq:reg of conv product}) is then a direct application of Lemma \ref{lem:(Volterra-sewing-lemma)}, together with the inequality estimates (\ref{reg z2 con y}) and (\ref{reg con con y}). 
\end{proof}

\begin{rem}\label{zaa}
The general convolution $\bz_{ts}^{2,\tau}\ast y^{\cdot_{1},\cdot_{2}}_{s}$ given in (\ref{def of conv}), for a path $y$ defined on  $\Delta_{3}$, will be invoked for our rough path constructions in the remainder of the article. If one wishes to consider the convolution restricted to a path $y^{\cdot}_{u}$ defined on $\Delta_{2}$, a natural way to proceed is to define 
\begin{equation*}
\bz^{2,\tau}_{ts}\ast y^{\cdot}_s:= \bz^{2,\tau}_{ts}\ast \hat{y}^{\cdot_1,\cdot_2}_s,
\quad\text{with}\quad
\hat{y}_{s}^{r_1,r_2}=y_{s}^{r_2}.
\end{equation*}
  Namely the path $\hat{y}$ has no dependence in $r_1$. We let the patient reader check the norm identity $\|\hat{y}^{\cdot_1,\cdot_2}\|_{(\alpha,\gamma),1,2}\simeq \|y\|_{(\alpha,\gamma),1,2}$, where $\|\hat{y}^{\cdot_1,\cdot_2}\|_{(\alpha,\gamma),1,2}$ is given as in \eqref{upper vb norm}  and  $\|y\|_{(\alpha,\gamma),1,2}$ is introduced in \eqref{eq: two variable  function norm change}.
\end{rem}

\begin{rem}
As a special case of Remark \ref{zaa}, we can define  the convolution $\bz^{2,\tau}_{tu}\ast \bz^{1,\cdot}_{us}$ by setting $y^{r}_{u}=\bz^{1,r}_{us}$. Then $y$ trivially satisfies $\|y\|_{(\alpha,\gamma),1,2}<\infty$ if $\bz^{1}\in\mathcal{V}^{(\alpha,\gamma)}$, which ensures a proper definition of $\bz^{2,\tau}_{tu}\ast \bz^{1,\cdot}_{us}$. Moreover, a direct application of  Theorem \ref{thm:genreal volterra convolution } yields  
\begin{equation}\nonumber
|\bz^{2,\tau}_{tu}\ast\bz^{1,\cdot}_{us}|\lesssim |\tau-t|^{-\gamma}|t-s|^{3\rho+\gamma}\wedge |\tau-s|^{3\rho}.
\end{equation}
\end{rem}

\begin{rem}\label{rem:extn of conv}
In our applications to rough Volterra equations we will consider the case $\rho=\alpha-\gamma\in(1/3,1/2]$, and therefore it is sufficient to show that the convolution product $\ast$ can be performed on the first and second level of a Volterra rough path. Indeed, whenever $\rho>1/3$, the convolution product for third or higher order terms in the Volterra rough path are of regularity $3\rho$ which is greater than $1$. Therefore the higher order convolutions $\bz^{n,\tau}$ introduced in (\textbf{H2}) may be constructed as a classical Riemann integral. 
For a general $\rho\in(0,1)$, it is easily conceived that one could extend the construction
 of the convolution product given in Theorem \ref{thm:genreal volterra convolution }
  to any order Volterra rough path $\bz^{n}$ satisfying (\ref{eq: hyp delta rel}).
   This can be done by induction on $n$, and one first need to give a proper definition of the convolution product up to order $k=[1/\rho]$. The convolution product between elements $\bz^{K}$ of order $K\geq k+1$ is then constructed canonically through Riemann integration, together with (\ref{eq: hyp delta rel}). We defer this extension to a the forthcoming paper \cite{HTW}.
\end{rem}

\subsection{Volterra convolutional functionals}

With the preliminary notions of Section~\ref{sec:conv-prod-rough} in hand, we are now ready to generalize the notion of multiplicative functional (as introduced by Lyons et. al.  in \cite{LyonsLevy}) to a Volterra context. The basic definition of Volterra convolutional functional is the following.

\begin{defn}
\label{def:Volterra convolutional functional}
Let $n\geq1$, and recall that $T^{(n)}=T^{(n)}(E)$ has been introduced in Definition~ \ref{def:truncated-algebra}. We consider a continuous map
\begin{equation*}
\bz:\Delta_{3}\rightarrow T^{\left(n\right)},
\qquad
\left(s,t,\tau\right)\mapsto{\bf z}_{ts}^{\tau}=\left(1,\bz_{ts}^{1,\tau},\ldots ,\bz_{ts}^{n,\tau}\right).
\end{equation*}
We call this mapping a Volterra convolutional functional if for all $\left(s,u,t,\tau\right)\in\Delta_{3}$ it satisfies
\[
{\bf z}_{ts}^{\tau}={\bf z}_{tu}^{\tau}\ast{\bf z}_{us}^{\cdot},
\]
  where for all $1\leq p\leq n$  the convolution product $\left({\bf \bz}_{tu}^{\tau}\ast{\bf z}_{us}^{\cdot}\right)^{p}$ is defined by 
\begin{equation}\label{p-step conv}
\left({\bf \bz}_{tu}^{\tau}\ast{\bf z}_{us}^{\cdot}\right)^{p}=\sum_{i=0}^{p}\bz_{tu}^{p-i,\tau}\ast \bz_{us}^{i,\cdot},
\end{equation}
 and where the convolution in the right hand side of (\ref{p-step conv}) is understood as in \eqref{convolution in 1d} or~\eqref{def of conv}. 
\end{defn}
\begin{rem}
In order to define \eqref{p-step conv}, we need in fact an extension \eqref{def of conv} to higher order integrals of the form $\bz^{j,\tau}$.  As mentioned in Remark \ref{rem:extn of conv}, we defer this extension to a future article. Notice however that, thanks to our restriction to $\rho>\frac{1}{3}$, we mostly need $p-i$ and $i$ $\leq 2$ in \eqref{p-step conv}. This case is covered by \eqref{convolution in 1d} or \eqref{def of conv}. 
\end{rem}

Proceeding as in \cite{LyonsLevy}, we will now define some H\"older type norms adapted to our Volterra multiplicative functionals. 

\begin{defn}
\label{def:Volterra Rough path} For $\alpha,\gamma\in\left(0,1\right)$
with $\rho:=\alpha-\gamma>0$, consider a Volterra convolutional functional
$\bz$ of degree $n=\lfloor \rho^{-1}\rfloor $
 as given in Definition \ref{def:Volterra convolutional functional}.
Let us assume that for $1\leq j\leq n$ the component $\bz^{j}$ of $\bz$ satisfies $\bz^{j}\in \mathcal{V}^{(j\rho+\gamma,\gamma)}_{2}$ where the space $\mathcal{V}^{(\alpha,\gamma)}$ has been introduced in Definition \ref{holder norms}. In addition we suppose that 
\begin{equation}\label{c1c}
\|\bz^{j}\|_{(j\rho+\gamma,\gamma),1} \lesssim\frac{M^j}{\Gamma\left(j\rho+1\right)} \,\,\,\, \mathrm{and} \,\,\,\, \|\bz^{j}\|_{(j\rho+\gamma,\gamma),1,2} \lesssim\frac{M^j}{\Gamma\left(j\rho+1\right)},
\end{equation}
for all $1\leq j\leq n$, 
where $M$ is a constant such that  $\|z^{1}\|_{(\alpha,\gamma)} \leq M$. 
 Then we say that $\bz$ is a Volterra rough path, and we denote
the space of Volterra rough paths of regularity $\left(\alpha,\gamma\right)$
by $\mathcal{\mathscr{V}}^{\left(\alpha,\gamma\right)}\left(\Delta_{2}\left(\left[0,T\right]\right);E\right).$ 
\end{defn}

\begin{rem}
All rough paths (in the classical framework recalled in Section \ref{subsec:classic-rough-paths}) are also Volterra rough
paths with Volterra kernel $k=1$,  i.e. $x_{t}^{1}:=\int_{0}^{t}1dx_{r}$.
Thus the definition of Volterra rough paths is truly extending the
definition of a rough path, and the convolutional product $\ast$
is extending the usual truncated tensor product by coupling the product
through the integration of kernels. 
\end{rem}

By definition we can see that a Volterra rough path is a continuous
mapping from $\Delta_{3}\left(\left[0,T\right]\right)$
to $T^{(\lfloor \rho^{-1}\rfloor )}\left(E\right)$.
We will also find it useful to equip the space with a metric generalizing (\ref{eq:def-metric-rough}). Let
us therefore define a metric for two Volterra rough paths $\bz$ and
$\mathbf{y}$ in $\mathscr{V}^{\left(\alpha,\gamma\right)}$ where $\rho=\alpha-\gamma$
by \begin{equation}
d_{\left(\alpha,\gamma\right)}\left(\bz,\mathbf{y}\right)=|z_{0}-y_{0}|+\sum_{m=1}^{\lfloor \rho^{-1}\rfloor }\| z^{m}-y^{m}\|_{\left(m\rho+\gamma,\gamma\right)}.\label{eq:rough path metric}
\end{equation}

\begin{defn}
We define the space of geometric Volterra paths as the closure of
smooth Volterra paths (i.e. paths in $\mathcal{V}^{\left(1,\gamma\right)}$)
in the rough path metric from equation \eqref{eq:rough path metric}.
The space of all geometric Volterra rough paths is denoted by $\mathcal{\mathscr{GV}}^{\left(\alpha,\gamma\right)}$. 
\end{defn}

\begin{rem}
Note that the geometric Volterra paths are not contained in a free-nilpotent Lie group, as is the case for regular rough paths. Indeed, there exists no concept of integration by parts in general for Volterra paths due to the possible singularities, and thus the notion of geometric Volterra paths can not be seen as an object in the space $G^{(l)}$ given in Definition~\ref{def:free-group}. 
\end{rem}
The following is an equivalent of the extension theorem for multiplicative functionals to a Volterra context. It can also be seen as an extension of Proposition \ref{prop:gamma bound smooth functionals}  and Proposition~\ref{prop:convolutional Chens relation smooth} to a rough context. 

\begin{thm}\label{extension}
Let $n=\lfloor \rho^{-1}\rfloor $ for $\rho=\alpha-\gamma>0$
and assume that $\mathbf{z}\in\mathscr{V}^{\left(\alpha,\gamma\right)}$
is an $n$-th order Volterra rough path with values in $T^{\left(n\right)}\left(E\right)$ according to Definition \ref{def:Volterra Rough path}.
Then there exists a unique extension of $\mathbf{z}$ to $T\left(E\right)$.
In particular, for all $m\geq n+1$ there exists a unique element $\mathbf{z}^{m}\in E^{\otimes m}$
such that for any $u\in[s,t]$ the following algebraic property is satisfied
\begin{equation}
\mathbf{z}_{ts}^{m,\tau} =\sum_{i=0}^{m}\mathbf{z}_{tu}^{m-i,\tau}\ast\mathbf{z}_{us}^{i,\cdot},\label{eq:abc}\\
\end{equation}
where we have used the convention $\bz^0 \equiv 1$ and  $\bz^j\ast 1=1\ast \bz^j =\bz^j$. 
In addition the bound~(\ref{c1c}) can be extended to $\bz$. Namely for $m\geq n+1$ we have for a constant $M>0$  such that  $\|\bz^{1}\|_{(\alpha,\gamma)} \leq M$ the following properties
\begin{equation}\label{cde}
\|\bz^{m}\|_{(m\rho+\gamma,\gamma),1} \lesssim\frac{M^m}{\Gamma\left(m\rho+1\right)}, \,\,\,\, \mathrm{and} \,\,\,\, \|\bz^{m}\|_{(m\rho+\gamma,\gamma),1,2} \lesssim\frac{M^m}{\Gamma\left(m\rho+1\right)},
\end{equation}
for any $\beta\in[0,1]$. 
It follows that there exists a unique Volterra signature with respect to the $n$-th order Volterra rough path. 
\end{thm}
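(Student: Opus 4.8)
The plan is to mimic the proof of the classical rough path extension theorem (\cite[Theorem 3.7]{LyonsLevy}), with the tensor product $\otimes$ systematically replaced by the convolution product $\ast$, and with the factorial decay replaced by the Gamma-type decay $M^m/\Gamma(m\rho+1)$ that we have already seen emerging in Proposition \ref{prop:gamma bound smooth functionals}. First I would argue by induction on $m\geq n+1$: assuming that the components $\bz^{1},\ldots,\bz^{m-1}$ have been constructed, satisfy the modified Chen relation \eqref{eq:abc} at all lower levels, and obey the bounds \eqref{cde}, one constructs $\bz^{m}$ by a sewing argument. Concretely, define for $(s,t,\tau)\in\Delta_3$ and a partition $\mathcal{P}$ of $[s,t]$ the candidate increment $\varXi_{ts}^{\tau}$ built from the ``lower order'' data via $\delta_u \bz^{m,\tau}_{ts} = \sum_{i=1}^{m-1}\bz^{m-i,\tau}_{tu}\ast\bz^{i,\cdot}_{us}$, which by the induction hypothesis is already a well-defined $E^{\otimes m}$-valued map; one then sets $\bz^{m,\tau}_{ts} = \lim_{|\mathcal{P}|\to 0}\sum_{[u,v]\in\mathcal{P}}\varXi^{\tau}_{vu}$. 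The key point is that the regularity exponents line up: each product $\bz^{m-i,\tau}_{tu}\ast\bz^{i,\cdot}_{us}$ has, by Theorem \ref{thm: Integral product well defined} and Theorem \ref{thm:genreal volterra convolution }, regularity of order $(m-i)\rho + i\rho = m\rho > 1$ in the $|t-s|$ variable (since $m\geq n+1 > \rho^{-1}$), while keeping the singularity $|\tau-t|^{-\gamma}$ on the diagonal. Hence the hypotheses of the Volterra sewing Lemma \ref{lem:(Volterra-sewing-lemma)} are met with $\beta = m\rho > 1$ and $\kappa=\gamma$, so the limit exists, is independent of $\mathcal{P}$, and satisfies $\delta \bz^{m} = \sum_{i=1}^{m-1}\bz^{m-i}\ast\bz^{i}$, which after reindexing and using $\bz^0\equiv 1$ is exactly \eqref{eq:abc}.

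Next I would verify the analytic bounds \eqref{cde}. From the sewing estimate \eqref{sy lemma bound} one gets $|\bz^{m,\tau}_{ts} - \varXi^{\tau}_{ts}| \lesssim \|\delta\bz^{m}\|_{(m\rho+\gamma,\gamma),1}\,(|\tau-t|^{-\gamma}|t-s|^{m\rho+\gamma}\wedge|\tau-s|^{m\rho})$, and one bounds $\varXi^{\tau}_{ts}$ itself by choosing a convenient point $u$ (e.g. $u=\frac{s+t}{2}$) and summing the convolution estimates from Theorems \ref{thm: Integral product well defined} and \ref{thm:genreal volterra convolution }. The combinatorial heart of the matter is to track the constants through the induction: the sum $\sum_{i=1}^{m-1} \frac{M^{m-i}}{\Gamma((m-i)\rho+1)}\cdot\frac{M^{i}}{\Gamma(i\rho+1)}$ must be reabsorbed into $\frac{M^m}{\Gamma(m\rho+1)}$. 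This is done exactly as in the classical case via the Beta-function identity $\sum_{i=1}^{m-1}\frac{1}{\Gamma((m-i)\rho+1)\Gamma(i\rho+1)} = \frac{1}{\Gamma(m\rho+1)}\sum_{i}\binom{m\rho}{i\rho}$-type bound, or more robustly by the neo-classical inequality of Lyons: $\sum_{i=0}^{m}\frac{x^{i\rho}y^{(m-i)\rho}}{\Gamma(i\rho+1)\Gamma((m-i)\rho+1)} \leq \frac{K_\rho}{\Gamma(m\rho+1)}(x+y)^{m\rho}$ for a constant $K_\rho$ depending only on $\rho$. The bound on $\|\bz^{m}\|_{(m\rho+\gamma,\gamma),1,2}$ is obtained identically, using the second sewing estimate \eqref{sy lemma upper arg bound} and the ``$1,2$'' parts of the convolution estimates, which is why Hypothesis $(\mathbf{H2})$ was stated to include control of both $\|\bz^{j}\|_{(\cdot),1}$ and $\|\bz^{j}\|_{(\cdot),1,2}$.

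For uniqueness, suppose $\tilde{\bz}^{m}$ is another extension satisfying \eqref{eq:abc} and the regularity requirement $\tilde{\bz}^m\in\mathcal{V}^{(m\rho+\gamma,\gamma)}_2$. Then the difference $D^{\tau}_{ts} = \bz^{m,\tau}_{ts}-\tilde{\bz}^{m,\tau}_{ts}$ satisfies $\delta D = 0$ (both have the same $\delta$, prescribed by lower-order terms), so $D$ is an additive increment; but an additive increment that is simultaneously of H\"older regularity $m\rho + \gamma > 1$ off the diagonal must vanish, by the uniqueness part of the sewing lemma (a $\delta$-closed increment of regularity $>1$ is zero). Iterating from $m=n+1$ upwards gives the unique extension to all of $T(E)$, and the Volterra signature is then defined as $S(\bz)^{\tau}_{ts} = (1, \bz^{1,\tau}_{ts}, \bz^{2,\tau}_{ts},\ldots)$, the $\ast$-multiplicativity being precisely \eqref{eq:abc} at every level. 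The main obstacle I anticipate is not the existence-via-sewing step, which is routine once Theorem \ref{thm:genreal volterra convolution } is in hand, but rather the bookkeeping in the inductive constant estimate: one must make sure that the convolution product at level $(m-i,i)$ genuinely produces the clean exponent $m\rho+\gamma$ with the correct diagonal singularity $-\gamma$ (not $-2\gamma$ or worse), which requires using the ``min'' structure $|\tau-t|^{-\gamma}|t-s|^{\bullet}\wedge|\tau-s|^{\bullet}$ carefully at each composition — this is exactly the subtlety already visible in \eqref{reg of conv 25} and \eqref{eq:reg of conv product}, where the output singularity stays $-\gamma$ rather than accumulating.
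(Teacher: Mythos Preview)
Your approach mirrors the paper's: induction on $m$, existence via the Volterra sewing lemma applied to the germ $\varXi^\tau_{vu} = \sum_{i=1}^{m-1}\bz^{m-i,\tau}_{vu}\ast\bz^{i,\cdot}_{us}$ (with $s$ fixed as the left endpoint of $[s,t]$), the neo-classical inequality for the Gamma-type decay, and uniqueness via vanishing of additive increments of regularity exceeding $1$. Two refinements are worth noting.

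First, you claim the sewn object automatically ``satisfies $\delta \bz^{m} = \sum_{i=1}^{m-1}\bz^{m-i}\ast\bz^{i}$, which after reindexing \ldots\ is exactly \eqref{eq:abc}''. This is \emph{not} delivered by the sewing lemma itself, and the paper devotes a separate step (Step 3) to it: one splits a partition of $[s,t]$ at an intermediate point $r$, applies the inductive Chen relation to expand each $\bz^{i,\cdot}_{us}$ for $u\in[r,t]$ as $\sum_{j}\bz^{i-j,\cdot}_{ur}\ast\bz^{j,\cdot}_{rs}$, reorganizes the resulting double sum via the associativity of $\ast$, and recognizes the limits as $\sum_{j}\bz^{m-j,\tau}_{tr}\ast\bz^{j,\cdot}_{rs}$. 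Because $\ast$ is an integral convolution rather than a pointwise tensor product, this verification is less transparent than in the classical case and should not be treated as a reindexing triviality.

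Second, your plan to bound $\varXi^{\tau}_{ts}$ via a midpoint is unnecessary: with the germ above one has $\varXi^\tau_{ts}=0$ (since $\bz^{i,\cdot}_{ss}=0$), so the sewing estimate \eqref{sy lemma bound} yields the bound \eqref{cde} directly. Also, the correct sewing exponent from \eqref{neo classic delta varxi} is $\beta=m\rho+\gamma$ with $\kappa=\gamma$, not $\beta=m\rho$; the condition $\beta>1$ then reads $m\rho+\gamma>1$, which is of course implied by your $m\rho>1$.
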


\begin{proof}

We will divide the proof into several steps.

\noindent
\textit{Step 1: Uniqueness.}
The uniqueness problem will be addressed by induction. Indeed, for $m=n+1$  relation (\ref{eq:abc}) reads 
\beq\label{equiv delta abc}
\delta_{u}\bz^{\tau}_{ts}=\sum_{i=1}^{n}\mathbf{z}_{tu}^{n+1-i,\tau}\ast\mathbf{z}_{us}^{i,\cdot}.
\eeq
The right hand side of (\ref{equiv delta abc}) only depends on the stack $\{\bz^{j}|\,1\leq j\leq n\}$, and is therefore uniquely defined thanks to our assumptions. 
Now consider  $\tilde{\mathbf{z}}^{m}$ and
$\bar{\mathbf{z}}^{m}$ two candidates for  $\mathbf{z}^{m}$ with $m=n+1$, and 
define $\psi_{ts}^{\tau}=\tilde{\mathbf{z}}_{ts}^{m,\tau}-\bar{\mathbf{z}}_{ts}^{m,\tau}$. Then according to (\ref{equiv delta abc}) and relation~\eqref{cde} we have 
\beq\label{prop psi}
\delta \psi^{\tau}=0 , \,\,\, \quad\mathrm{and}\quad\,\,\,|\psi_{ts}^{\tau}|\lesssim|\tau-s|^{-\gamma}|t-s|^{(n+1)\rho+\gamma}.
\eeq
In particular $\psi$ is an additive functional with regularity greater than $1$. It is thus readily seen that $\psi=0$, which proves the uniqueness for $m=n+1$.  Once the uniqueness is  shown for the levels  $k=n+1,\ldots,m$, an induction procedure similar to what lead to~\eqref{prop psi} 	also shows uniqueness for $k=m+1$.

\noindent
\textit{Step 2: Existence.}
The existence will be proved again based on induction. We will first show that an $(m=n+1)$-th order Volterra rough path  can be constructed purely based on the information of $\bz^1,\ldots,\bz^n$. To this aim note that if there exists
a lift $\mathbf{z}^{m}$, then it must satisfy for any partition $\text{\ensuremath{\mathcal{P}} of \ensuremath{\left[s,t\right]} }$
\begin{equation}\label{g1}
\mathbf{z}_{ts}^{m,\tau}
=\sum_{\left[u,v\right]\in\mathcal{P}} 
\lp \mathbf{z}_{vu}^{m,\tau}+\delta_{u}\mathbf{z}_{vs}^{m,\tau}\rp.
\end{equation}
We will now take limits in \eqref{g1} as $|\mathcal{P}|\rightarrow0$.

To this aim, notice that according to (\ref{cde})  we have 
 $$|\mathbf{z}_{vu}^{m,\tau}|\lesssim|\tau-u|^{-\gamma}|v-u|^{m\rho+\gamma},$$
Hence, since  $m\rho>1$ we easily check that 
$
\lim_{|\mathcal{P}|\rightarrow0}\sum_{\left[u,v\right]\in\mathcal{P}}\mathbf{z}_{vu}^{m,\tau}=0.
$
In particular we obtain 
\begin{equation}
L_{ts}^{m,\tau}\equiv
 \lim_{|\mathcal{P}|\rightarrow0}\sum_{\left[u,v\right]\in\mathcal{P}} 
\lp \mathbf{z}_{vu}^{m,\tau}+\delta_{u}\mathbf{z}_{vs}^{m,\tau}\rp 
=
\ensuremath{\lim_{|\mathcal{P}|\rightarrow0}\sum_{\left[u,v\right]\in\mathcal{P}}\delta_{u}\mathbf{z}_{vs}^{m,\tau}}.
\label{eq:m order}
\end{equation}
In addition $\bz^{m,\tau}$ is required to satisfy (\ref{eq:abc}). Thus recalling that $m=n+1$ we have  
\begin{equation}\label{varxi def 478}
L_{ts}^\tau=\lim_{|\mathcal{P}|\rightarrow 0}\sum_{[u,v]\in\mathcal{P}}\varXi_{vu}^{\tau}
\,\,\,\quad\mathrm{where}\,\,\,\quad
\varXi_{vu}^\tau=\sum_{i=1}^{n}\mathbf{z}_{vu}^{m-i,\tau}\ast\mathbf{z}_{us}^{i,\cdot}. 
\end{equation}
Our strategy is now to prove that $L_{ts}^{\tau}$ exists by applying the Sewing Lemma \ref{lem:(Volterra-sewing-lemma)} to the increment $\varXi$. The main assumption to check in order to apply Lemma \ref{lem:(Volterra-sewing-lemma)} concerns $\delta\varXi$, and thus we obtain
\begin{equation}\label{472}
|\delta_{r}\varXi_{vu}^\tau|
=
\left|\sum_{i=1}^{n}\mathbf{z}_{vr}^{m-i,\tau}\ast\mathbf{z}_{ru}^{i,\cdot}\right|
\lesssim M^m\sum_{i=1}^{n}\frac{|\tau-r|^{-\gamma}|v-r|^{\left(m-i\right)\rho+\gamma}|r-u|^{i\rho}}{\Gamma\left(\left(m-i\right)\rho+1\right)\Gamma\left(i\rho+1\right)},
\end{equation}
where the first identity is obtained thanks to an elementary computation of $\delta_{r}(\mathbf{z}_{vu}^{m-i,\tau}\ast\mathbf{z}_{us}^{i,\cdot})$. Also note that the second inequality in (\ref{472}) directly stems from the assumption~(\ref{c1c}), which stipulates that 
\begin{equation}
\|\bz^j\|_{(j\rho+\gamma,\gamma),1}\leq M^j\Gamma(j\rho+1)^{-1}.
\end{equation} 

One can improve \eqref{472} in the following way: applying the neo-classical inequality from \cite[Lemma 3.8]{LyonsLevy}, we
know that there exists a $C>0$ such that 
\[
\sum_{i=1}^{m-1}\frac{|v-r|^{\left(m-i\right)\rho+\gamma}|r-u|^{i\rho}}{\Gamma\left(\left(m-i\right)\rho+1\right)\Gamma\left(i\rho+1\right)}\leq C\frac{|v-u|^{m\rho+\gamma}}{\Gamma\left(m\rho+1\right)}.
\]
Plugging this information into \eqref{472}, we conclude that $\delta\varXi$ satisfies 
\begin{equation}\label{neo classic delta varxi}
|\delta_{r}\varXi_{vu}^\tau|\lesssim M^m \frac{|\tau-v|^{-\gamma}|v-u|^{m\rho+\gamma}}{\Gamma\left(m\rho+1\right)}.
\end{equation}
With (\ref{neo classic delta varxi}) in hand, we can apply Lemma \ref{lem:(Volterra-sewing-lemma)} to the increment $\varXi $. We get that the limit $L_{ts}^{\tau}$  defined by (\ref{eq:m order}) exists, and we set $\mathcal{I}\left(\varXi^\tau\right)_{ts}=L_{ts}^{\tau}=\bz^{m,\tau}_{ts}$ for $m=n+1$. Moreover, a direct application of (\ref{sy lemma bound}) together with the fact that $\varXi_{ts}^\tau=0$ yield 
\begin{equation}\label{bbbbb}
|\mathbf{z}_{ts}^{m,\tau}|\lesssim M^m \frac{\left(|\tau-t|^{-\gamma}|t-s|^{m\rho+\gamma}\right)\wedge |\tau-s|^{m\rho}}{\Gamma\left(m\rho+1\right)}.
\end{equation}
It now follows that \beq\label{b1b1b}
\|\bz^m\|_{(m\rho+\gamma,\gamma),1}\lesssim M^m \Gamma(m\rho+1)^{-1}.
\eeq 
We also let the patient reader check that a simple induction procedure allows to generalize  all our considerations until (\ref{b1b1b}) for a generic $m\geq n+1$.

We will now prove that 
\beq\label{relation d}
\|\bz^m\|_{(m\rho+\gamma,\gamma),1,2}\lesssim M^m \Gamma(m\rho+1)^{-1}.
\eeq
To this aim, we need to repeat the procedure of Steps 1-2 for $\bz^{m,\tau\tau^\prime}_{ts}=\bz^{m,\tau}_{ts}-\bz^{m,\tau^\prime}_{ts}$. In particular, the equivalent  of the incremental $\varXi^\tau$ defined in (\ref{varxi def 478}) will be 
\beq\nonumber
\varXi^{\tau\tau^\prime}_{ts}=\sum_{i=1}^{m-1}\mathbf{z}_{vu}^{m-i,\tau\tau^\prime}\ast\mathbf{z}_{us}^{i,\cdot}.
\eeq
With this increment in hand, relation \eqref{relation d} is proved along the same lines as \eqref{b1b1b}. Details are omitted for the sake of conciseness. The norm $\|\bz^m\|_{(m\rho+\gamma,\gamma),2}$ can also be estimated with the same kind of argument. Hence gathering (\ref{b1b1b}) and (\ref{relation d}), we have obtained  that $\bz^m\in \mathcal{V}^{(m\rho+\gamma,\gamma)}$ where $\mathcal{V}^{(\alpha,\gamma)}$ is given in Definition  \ref{holder norms}. 

\noindent
\textit{Step 3: Convolutional property.}
It remains to be proven that $\mathbf{z}^{m}$ is a convolutional
functional in terms of Definition \ref{def:Volterra convolutional functional}, i.e. that for $m\geq n+1$  and $(s,r,t,\tau)\in \Delta_4$ it satisfies 
\begin{equation}\label{prove conv relation}
\mathbf{z}_{ts}^{m,\tau}=\sum_{i=0}^{m}\mathbf{z}_{tr}^{m-i,\tau}\ast\mathbf{z}_{rs}^{i,\cdot}.
\end{equation}
In order to prove identity (\ref{prove conv relation}), recall that (\ref{varxi def 478}) can be read as 
\beq\nonumber
\mathbf{z}_{ts}^{m,\tau}  =\lim_{|\mathcal{P}|\rightarrow0}\sum_{\left[u,v\right]\in\mathcal{P}}\ensuremath{\sum_{i=1}^{m-1}\mathbf{z}_{vu}^{m-i,\tau}\ast\mathbf{z}_{us}^{i,\cdot}}.
\eeq
Let us now divide a typical partition  $\mathcal{P}$  into $\mathcal{P}\cap [s,r]$ and $\mathcal{P}\cap [r,t]$. This yields  
\begin{align}\nonumber
\mathbf{z}_{ts}^{m,\tau} & =\lim_{|\mathcal{P}|\rightarrow0}\sum_{\left[u,v\right]\in\mathcal{P}\cap\left[s,r\right]}\ensuremath{\sum_{i=1}^{m-1}\mathbf{z}_{vu}^{m-i,\tau}\ast\mathbf{z}_{us}^{i,\cdot}}+\lim_{|\mathcal{P}|\rightarrow0}\sum_{\left[u,v\right]\in\mathcal{P}\cap\left[r,t\right]}\ensuremath{\sum_{i=1}^{m-1}\mathbf{z}_{vu}^{m-i,\tau}\ast\mathbf{z}_{us}^{i,\cdot}}
\\\label{eq:last in algn thm 39}
 &=\bz^{m,\tau}_{rs}+\hat{L}_{trs}^{\tau},
\end{align}
where we have invoked (\ref{varxi def 478}) again for the second identity and where we have set 
\beq\nonumber
\hat{L}_{trs}^{\tau}=\lim_{|\mathcal{P}|\rightarrow0}\sum_{\left[u,v\right]\in\mathcal{P}\cap\left[r,t\right]}\ensuremath{\sum_{i=1}^{m-1}\mathbf{z}_{vu}^{m-i,\tau}\ast\mathbf{z}_{us}^{i,\cdot}}.
\eeq
As in the previous steps we now proceed by induction. Namely assume that \eqref{eq:abc} holds for $k=1,\ldots,m-1$, and let us propagate the relation until $k=m$. Then applying the identity $\mathbf{z}_{ts}^{i,\tau} =\sum_{j=0}^{i}\mathbf{z}_{tu}^{i-j,\tau}\ast\mathbf{z}_{us}^{j,\cdot}$, which is valid for all $l<m$,  we get 
\beq\label{hatL division}
\hat{L}^{\tau}_{trs}=\hat{L}^{1,\tau}_{trs}+\hat{L}^{2,\tau}_{trs},
\eeq
where we define 
\begin{align}\nonumber
\hat{L}_{trs}^{1,\tau}&=\lim_{|\mathcal{P}|\rightarrow0}\sum_{\left[u,v\right]\in\mathcal{P}\cap\left[r,t\right]}\ensuremath{\sum_{i=1}^{m-1}\mathbf{z}_{vu}^{m-i,\tau}\ast\mathbf{z}_{ur}^{i,\cdot}}
\\\nonumber
\hat{L}_{trs}^{2,\tau}&=\lim_{|\mathcal{P}|\rightarrow0}\sum_{\left[u,v\right]\in\mathcal{P}\cap\left[r,t\right]}\ensuremath{\sum_{i=1}^{m-1}\mathbf{z}_{vu}^{m-i,\tau}\ast\left[\sum_{j=1}^{i}\mathbf{z}_{ur}^{i-j,\cdot}\ast\mathbf{z}_{rs}^{j,\cdot}\right]}.
\end{align}
Next, another application of (\ref{varxi def 478}) enables us to obtain directly 
\beq\label{hatL1}
\hat{L}_{trs}^{1,\tau}=\bz^{m,\tau}_{tr}.
\eeq 
In order to handle the term $\hat{L}^{2,\tau}_{trs}$, let us change the order of the sums with respect to $i,j$ and invoke the associativity of the convolution product $*$. We get
\begin{align}
\hat{L}_{trs}^{2,\tau}=&\lim_{|\mathcal{P}|\rightarrow0}\sum_{\left[u,v\right]\in\mathcal{P}\cap\left[r,t\right]}
\sum_{j=1}^{m-1}\sum_{i=j}^{m-1}\mathbf{z}_{vu}^{m-i,\tau}\ast\mathbf{z}_{ur}^{i-j,\cdot}\ast\mathbf{z}_{rs}^{j,\cdot} \notag
\\
\label{foruninty six}
=&\sum_{j=1}^{m-1} \left[\lim_{|\mathcal{P}|\rightarrow0}\sum_{\left[u,v\right]\in\mathcal{P}\cap\left[r,t\right]}
\sum_{i=j}^{m-1}\mathbf{z}_{vu}^{m-i,\tau}\ast\mathbf{z}_{ur}^{i-j,\cdot}\right]\ast\mathbf{z}_{rs}^{j,\cdot}.
\end{align}
Now an elementary change of variable and \eqref{eq:abc} yield
\begin{equation*}
\sum_{i=j}^{m-1}\mathbf{z}_{vu}^{m-i,\tau}\ast\mathbf{z}_{ur}^{i-j,\cdot}
=
\sum_{k=0}^{m-1-j}\mathbf{z}_{vu}^{m-j-k,\tau}\ast\mathbf{z}_{ur}^{k,\cdot}
=
\mathbf{z}_{vr}^{m-j,\tau}-\mathbf{z}_{ur}^{m-j,\tau}
=
\mathbf{z}_{vu}^{m-j,\tau} +\delta_{u}\mathbf{z}_{vr}^{m-j,\tau}.
\end{equation*}
Plugging this information into \eqref{foruninty six} and invoking \eqref{g1}, we end up with
\begin{equation}\label{dddd}
\hat{L}_{trs}^{2,\tau}
=
\sum_{j=1}^{m-1} 
\left[\lim_{|\mathcal{P}|\rightarrow0}\sum_{\left[u,v\right]\in\mathcal{P}\cap\left[r,t\right]}
\mathbf{z}_{vu}^{m-j,\tau} +\delta_{u}\mathbf{z}_{vr}^{m-j,\tau} 
\right]\ast\mathbf{z}_{rs}^{j,\cdot}
=
\sum_{j=1}^{m-1} \bz^{m-j,\tau}_{tr} \ast\mathbf{z}_{rs}^{j,\cdot}.
\end{equation}

Let us summarize our considerations so far: gathering (\ref{dddd}) and (\ref{hatL1}) into (\ref{hatL division}), and then inserting (\ref{hatL division}) into (\ref{eq:last in algn thm 39}) we have obtained that 
\beq\nonumber
 \bz^{m,\tau}_{ts}
 =\bz^{m,\tau}_{tr}+\bz^{m,\tau}_{rs}+\sum_{j=1}^{m-1} \mathbf{z}_{tr}^{m-i,\tau}\ast\mathbf{z}_{rs}^{i,\cdot}
 =
 \sum_{j=0}^{m} \mathbf{z}_{tr}^{m-i,\tau}\ast\mathbf{z}_{rs}^{i,\cdot}.
\eeq
This concludes our induction procedure, and thus (\ref{eq:abc}) holds for all $m\geq 1$.
\end{proof}

\begin{rem} 
Theorem \ref{extension} tells us that the Volterra signature associated to a Volterra path is uniquely determined from the Volterra rough path introduced in Definition~\ref{def:Volterra Rough path}. That is, once we have constructed a truncated Volterra rough path (remember that this object is by no means unique) then there exists a unique extension with respect to the full Volterra rough path.  
\end{rem}

\section{Non-linear Volterra integral equations driven by rough noise}
In this section we will see how we can substitute the conventional tensor product from rough path theory with the convolution product defined in Section \ref{sec:Volterra-Rough-Paths} in order to show existence and uniqueness of Volterra equations with singular kernels. Similarly to the theory of controlled rough path introduced by Gubinelli in \cite{Gubinelli}, we define a class Volterra controlled paths. The composition of the Volterra controlled paths with the Volterra rough path from  Definition~\ref{def:Volterra Rough path} gives an abstract Riemann integrand such that we may construct a Volterra integral by application of the Volterra Sewing Lemma \ref{lem:(Volterra-sewing-lemma)}. This abstract integration step is then the key in order to define and solve Volterra type equations.

\subsection{Volterra controlled processes and rough Volterra integration}\label{non-linear setting}
  
As many of the results here are extensions of classical texts on rough path such as \cite{FriHai}  or \cite{Gubinelli}, we will try to keep the proofs as concise as possible. The reader is sent to the aforementioned  references for further information on the results and properties of controlled rough paths and solutions to non-linear differential equations driven by rough paths. 
We will first give a definition of another modification of the Volterra-H\"older spaces given in Definition \ref{holder norms} in order to give a precise analysis of Volterra-controlled paths. 

\begin{defn}\label{def:mod Volterra holder}
Let  $\mathcal{W}_{2}^{\left(\alpha,\gamma\right)}$  denote the space of functions $u:\Delta_3\rightarrow V  $ such that $(p,q,s)\mapsto u^{p,q}_s \in V$ and 
\begin{equation}\label{lllll}
\|u^{\cdot_1,\cdot_2}\|_{(\alpha,\gamma)}:=\|u^{\cdot_1,\cdot_2}\|_{(\alpha,\gamma),1}+\|u^{\cdot_1,\cdot_2}\|_{(\alpha,\gamma),1,2} <\infty 
\end{equation}
 where we define the norm (recall the convention $\rho=\alpha-\gamma$ below)
\begin{align}\label{eq:derivative norm1}
\|u^{\cdot_1,\cdot_2}\|_{(\alpha,\gamma),1}
:=
\sup_{(s,t,\tau)\in\Delta_3}\frac{\left|u_{ts}^{\tau,\tau}\right|}{|\tau-t|^{-\gamma}|t-s|^{\alpha}\wedge|\tau-s|^{\rho}},
\end{align}
and the norm $\|u^{\cdot_1,\cdot_2}\|_{(\alpha,\gamma),1,2}$ is given as in Definition \ref{def: upper vb space}. 
\end{defn}
\begin{rem}
Note in particular that the definition of the  space  $\mathcal{W}_{2}^{\left(\alpha,\gamma\right)}$ does not involve a norm similar to \eqref{eq: two variable  function norm change}.  Although the definition of $\|u^{\cdot_1,\cdot_2}\|_{(\alpha,\gamma)}$ is a slight abuse of notation, we believe that it will be clear from the superscripts of $u$ what norm we apply.   
\end{rem}
We now turn to the definition of controlled Volterra paths, which is crucial for a proper definition  of rough Volterra equations.

\begin{defn}\label{Volterra controlled path}
Let $z\in\mathcal{V}^{\left(\alpha,\gamma\right)}\left(E\right)$
for some $\rho=\alpha-\gamma>0$. We assume that there exists two functions
$y:\Delta_{2}\rightarrow V$ and $y^{\prime}:\Delta_{3}\rightarrow\mathcal{L}\left(E,V\right),$ such that $y^\tau_0=y_0\in  E$  for  any $\tau\in [0,T]$ and $y^{\prime,p,q}_0=y^\prime_0\in E$ for any $(q,p)\in\Delta_2$, and satisfying the relation 
\begin{equation}\label{controlled rel}
y_{ts}^{\tau}=z_{ts}^{\tau} \ast y_{s}^{\prime,\tau,\cdot}+R_{ts}^{\tau},
\end{equation}
where $R\in\mathcal{V}_{2}^{\left(2\alpha,2\gamma\right)}\left(V\right)$ and  $y^\prime \in \mathcal{W}_{2}^{\left(\alpha,\gamma\right)}$.  (Recall that the spaces $\mathcal{V}_{2}^{\left(2\alpha,2\gamma\right)}$ and $\mathcal{W}_{2}^{\left(\alpha,\gamma\right)}$ are respectively introduced in Remark \ref{rem:a} and Definition \ref{def:mod Volterra holder}). 
Whenever $(y,y^\prime)$ satisfies relation~\eqref{controlled rel} we say that $\left(y,y^{\prime}\right)$ is a Volterra path controlled
by $z$ (or controlled Volterra path in general) and we write $\left(y,y^{\prime}\right)\in\mathscr{D}_{z}^{(\alpha,\gamma)}\left(\Delta_{2};V\right)$. We equip this space with a semi-norm $\|\cdot \|_{z,(\alpha,\gamma)}$  given by 
\begin{equation}\label{eq:controlled norm def}
\|y,y^{\prime}\|_{z,(\alpha,\gamma)}=\|y^{\prime,\cdot_1,\cdot_2}\|_{(\alpha,\gamma)} +\|R\|_{(2\alpha,2\gamma)}. 
\end{equation}
Under the mapping $(y,y^{\prime})\mapsto |y_{0}|+|y^{\prime}_{0}|+\|y,y^{\prime}\|_{z,(\alpha,\gamma)} $ the space $\mathscr{D}_{z}^{\left(\alpha,\gamma\right)}\left(\Delta_{2};V\right)$ is a Banach space. The remainder term $R$ in (\ref{controlled rel}) with respect to a Volterra path $(y,y^\prime)\in \mathscr{D}_{z}^{\left(\alpha,\gamma\right)}$ will typically be denoted by $R^y $. 
\end{defn}

\begin{rem}
We call the function $y^{\prime}$ the Volterra-Gubinelli derivative, and emphasize that this function is evaluated on $\Delta_{3}$, where it has \emph{two} upper arguments. This is denoted by $\Delta_{3}\ni (s,p,q)\mapsto y_{s}^{\prime,q,p}$  as opposed to the increment of a path $y$ in the upper variable denoted by $\Delta_{3}\ni (s,p,q)\mapsto y_{s}^{qp}$ .
\end{rem}

\begin{rem}\label{rem: inherited reg}
For a controlled Volterra path the regularity of $y$ in the upper
argument is inherited from the regularity of the upper argument of
the driving noise $z$, Gubinelli derivative and remainder term $R^y$. That is, it is implied from relation (\ref{controlled rel}) that for $\left(y,y^{\prime}\right)\in\mathscr{D}_{z}^{\left(\alpha,\gamma\right)}\left(\Delta_{2};V\right)$
we have 
\begin{equation}\label{inherit reg of controled path}
y_{ts}^{qp}=z^{qp}_{ts}\ast y_s^{\prime,p,\cdot_2}+z^{q}_{ts} \ast y_s^{\prime,qp,\cdot} +R_{ts}^{qp}.
\end{equation}

\end{rem}
Our next step is to show that we may construct the Volterra rough integral in a very similar way to the classical rough path integral, but changing $\otimes$ for $\ast$ as well as applying the Volterra Sewing lemma \ref{lem:(Volterra-sewing-lemma)}. It follows that the Volterra integral of a controlled path with respect to a driving Hölder noise $x\in \mathcal{C}^{\alpha}$  is again a controlled Volterra path.   
\begin{thm}
\label{thm:Volterra integral of controlled path is controlled path}
Let $x\in\mathcal{C}^{\alpha}$ and $k$ be a Volterra kernel satisfying
$\left(\mathbf{H}\right)$ with a parameter $\gamma$ such that $\rho=\alpha-\gamma>\frac{1}{3}.$
Thanks to Theorem \ref{thm:Regularity of Volterra path}, define $z_{t}^{\tau}=\int_{0}^{t}k\left(\tau,r\right)dx_{r}$  and
assume there exists a second order Volterra rough path $\mathbf{z}\in\mathscr{V}^{(\alpha,\gamma)}\left(\Delta_{2};E\right)$
built from $z$ according to Definition \ref{def:Volterra Rough path}. Additionally, suppose  both components of $\bz $  are uniformly bounded. Namely, we assume there exists an $M>0$ such that 
\begin{equation}\label{eq:bz notation}
\|\bz\|_{(\alpha,\gamma)} := \|\bz^{1}\|_{(\alpha,\gamma)}+\|\bz^{2}\|_{(2\rho+\gamma,\gamma)}\leq M,
\end{equation}
where the two norm quantities corresponds to the norms given in Definition \ref{holder norms} and Remark~\ref{rem:a}.  
We now consider a controlled Volterra path $(y,y^\prime)\in \mathscr{D}_{\bz^{1}}^{(\alpha,\gamma)}(\Delta_2;\mathcal{L}(E,V))$. Then the following holds true: 

\begin{enumerate}[wide, labelwidth=!, labelindent=0pt, label=\emph{(\roman*)}]
\setlength\itemsep{.1in}

\item 
The following limit exists for all $(s,t,\tau)\in \Delta_3$, 
\begin{equation}\label{eq: def of V integral}
w_{ts}^{\tau}=\int_{s}^{t}k(\tau,r)y_{r}^{r}dx_{r}:=\lim_{|\mathcal{P}|\rightarrow 0} \sum_{[u,v]\in \mathcal{P}} \bz_{vu}^{1,\tau}\ast y_{u}^{\cdot}+\bz_{vu}^{2,\tau}\ast y_{u}^{\prime,\cdot_1,\cdot_2}. 
\end{equation}

\item
Let $w$ be defined by \eqref{eq: def of V integral}. There exists a constant $C=C_{M,\alpha,\gamma}$ such that for all $(s,t)\in \Delta_2$  we have 
\begin{multline}
\left|w_{ts}^\tau-\bz_{ts}^{1,\tau}\ast y_{s}^{\cdot}-\mathbf{z}_{ts}^{2,\tau}\ast y_{s}^{\prime,\cdot_1,\cdot_2}\right| \\
\leq C\|y,y^\prime\|_{z,(\alpha,\gamma)}\|\bz\|_{(\alpha,\gamma)} \left[|\tau-t|^{-\gamma}|t-s|^{3\rho+\gamma}\wedge |\tau-s|^{3\rho}\right].\label{eq:bound volterra ointegration}
\end{multline}

\item
For all $(s,t,p,q)\in \Delta_4$ and $\eta\in[0,1]$ and $\zeta\in [0,\rho)$  we have 
\begin{multline}\label{eq:upper arg reg}
\left|w_{ts}^{qp}-\bz_{ts}^{1,qp}\ast y_{s}^{\cdot}-\mathbf{z}_{ts}^{2,qp}\ast y_{s}^{\prime,\cdot_1,\cdot_2}\right|
\\
\leq C\|y,y^\prime\|_{z,(\alpha,\gamma)}\|\bz\|_{(\alpha,\gamma)}|p-q|^{\eta}|q-t|^{-\eta+\zeta}\left[|q-t|^{-\gamma-\zeta}|t-s|^{3\rho+\gamma}\wedge |q-s|^{3\rho-\zeta}\right].
\end{multline}

\item 
The couple $(w,w^\prime)$ is a controlled Volterra path in $\mathscr{D}_{\bz^{1}}^{(\alpha,\gamma)}(\Delta_2,V)$, where we recall that $w$ is defined by (\ref{eq: def of V integral}) and $w_t^{\prime,\tau,p}=y_{t}^{p}.$
\end{enumerate} 
\end{thm}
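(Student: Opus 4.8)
The plan is to mirror, step by step, the proof of the analogous statement in the classical rough path theory (as in \cite{Gubinelli} or \cite{FriHai}), replacing the tensor product by the convolution product $\ast$ and using the Volterra sewing lemma \ref{lem:(Volterra-sewing-lemma)} in place of the classical one. First I would introduce the candidate abstract integrand
\[
\varXi_{ts}^{\tau}:=\bz_{ts}^{1,\tau}\ast y_{s}^{\cdot}+\bz_{ts}^{2,\tau}\ast y_{s}^{\prime,\cdot_{1},\cdot_{2}},
\]
where the two convolution products are understood in the sense of Theorem \ref{thm: Integral product well defined} and Theorem \ref{thm:genreal volterra convolution } (viewing $y^\tau_s$ and $y'^{p,q}_s$ as constants in the lower variable, as in Remark \ref{zaa}). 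The bulk of the work is to show $\varXi\in \mathscr{V}^{(\alpha,\gamma)(\beta,\kappa)}$ for suitable $\beta>1$, $\kappa\in(0,1)$ with $\beta-\kappa=\alpha-\gamma$, which boils down to estimating $\delta_m\varXi_{ts}^\tau$. Using the algebraic identities $\delta_m(\bz^{1,\tau}_{ts})=0$, the Chen-type relation $\delta_u\bz^{2,\tau}_{ts}=\bz^{1,\tau}_{tu}\ast\bz^{1,\cdot}_{us}$ (Hypothesis $(\bf H2)$ / Definition \ref{def:Volterra convolutional functional}), and the controlled-path relation \eqref{controlled rel} rewritten as $y^{\cdot}_{t}=z^{\cdot}_{t}\ast y'^{\cdot,\cdot}+\ldots$, a direct computation gives
\[
\delta_m\varXi_{ts}^{\tau}=-\bz_{tm}^{1,\tau}\ast R^{y,\cdot}_{ms}-\bz_{tm}^{2,\tau}\ast \big(y_{s}^{\prime,\cdot_1,\cdot_2}-y_m^{\prime,\cdot_1,\cdot_2}\big)+(\text{lower-order harmless terms}),
\]
and then one bounds each piece by combining the regularity of $\bz^1\in\mathcal V^{(\alpha,\gamma)}$, of $\bz^2\in\mathcal V^{(2\rho+\gamma,\gamma)}$, of $R^y\in\mathcal V_2^{(2\alpha,2\gamma)}$ and of $y'\in\mathcal W_2^{(\alpha,\gamma)}$, together with the convolution estimates \eqref{reg of conv 25} and \eqref{eq:reg of conv product}. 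The exponent of $|t-s|$ coming out of these bounds is $3\rho+\gamma$ (up to the $|\tau-\cdot|$ singularity), and since $\rho>1/3$ we have $3\rho+\gamma>1$, so we may set $\beta\in(1,3\rho+\gamma)$ and $\kappa=\beta-(\alpha-\gamma)$ and Lemma \ref{lem:(Volterra-sewing-lemma)} applies, giving existence of $w^\tau_{ts}:=\mathcal I(\varXi^\tau)_{ts}$ as in \eqref{eq: def of V integral}: this yields $(i)$, and the estimate \eqref{sy lemma bound} of the sewing lemma together with $\|\delta\varXi\|_{(\beta,\kappa),1}\lesssim \|y,y'\|_{z,(\alpha,\gamma)}\|\bz\|_{(\alpha,\gamma)}$ yields $(ii)$.

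For part $(iii)$ I would repeat the same computation for the increment in the upper variable, $\varXi_{ts}^{qp}:=\bz_{ts}^{1,qp}\ast y_{s}^{\cdot}+\bz_{ts}^{2,qp}\ast y_{s}^{\prime,\cdot_1,\cdot_2}$, now invoking the $\|\cdot\|_{(\cdot,\cdot),1,2}$-type norms (the $|p-q|^{\eta}|q-t|^{-\eta}$-weighted bounds of Theorem \ref{thm:Regularity of Volterra path}(iii), the bound \eqref{sy lemma upper arg bound} of the sewing lemma, and the $1,2$-parts of the norms of $\bz^1$, $\bz^2$). The estimate $\delta\varXi^{qp}$ inherits the extra factor $|p-q|^{\beta}|q-t|^{-\eta}$ with the same $|t-s|^{3\rho+\gamma}$ decay, so Lemma \ref{lem:(Volterra-sewing-lemma)}$(ii)$ directly produces \eqref{eq:upper arg reg}. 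Finally, for $(iv)$ one reads off from $(ii)$ that
\[
w_{ts}^{\tau}=\bz_{ts}^{1,\tau}\ast y_{s}^{\cdot}+\bz_{ts}^{2,\tau}\ast y_{s}^{\prime,\cdot_1,\cdot_2}+\big(w_{ts}^{\tau}-\bz_{ts}^{1,\tau}\ast y_{s}^{\cdot}-\bz_{ts}^{2,\tau}\ast y_{s}^{\prime,\cdot_1,\cdot_2}\big),
\]
and identifies $w'^{\tau,p}_{t}=y^{p}_{t}$ as the Gubinelli derivative and
\[
R^{w,\tau}_{ts}:=\bz_{ts}^{2,\tau}\ast y_{s}^{\prime,\cdot_1,\cdot_2}+\big(w_{ts}^{\tau}-\bz_{ts}^{1,\tau}\ast y_{s}^{\cdot}-\bz_{ts}^{2,\tau}\ast y_{s}^{\prime,\cdot_1,\cdot_2}\big)
\]
as the remainder; one checks $R^w\in\mathcal V_2^{(2\alpha,2\gamma)}$ using $(ii)$, $(iii)$ together with the bound \eqref{eq:reg of conv product} on $\bz^2\ast y'$ (whose decay is $2\rho+\gamma$, comfortably $\geq 2\alpha-2\gamma+\gamma$ when $\rho>1/3$... more precisely one verifies $2\rho+\gamma\ge 2\alpha$ fails in general, so one should instead note $2\rho+\gamma>2\rho=2(\alpha-\gamma)$ and that the $|\tau-s|^{2\rho}$-scale is exactly what is required), and that $w'=y\in\mathcal W_2^{(\alpha,\gamma)}$ since $(y,y')$ is controlled, hence $(w,w')\in\mathscr D_{\bz^1}^{(\alpha,\gamma)}$.

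The main obstacle will be bookkeeping the two independent singularities simultaneously: the one on the diagonal $t=\tau$ (governed by the exponents $\gamma$, $\kappa$) and the one coming from the convolution integration endpoint (the $|r-u|^{-\eta}$ factors appearing in Theorems \ref{thm: Integral product well defined} and \ref{thm:genreal volterra convolution }). Keeping track of which norm ($\|\cdot\|_{(\cdot,\cdot),1}$, $\|\cdot\|_{(\cdot,\cdot),1,2}$, $\|\cdot\|_{(\cdot,\cdot),2}$, or the $\delta$-versions) controls each term, and checking that all the resulting H\"older exponents in $|t-s|$ exceed $1$ precisely under the standing assumption $\rho>1/3$ — so that the Volterra sewing lemma is genuinely applicable — is the delicate part; the rest is a routine (if lengthy) transcription of the classical argument, which is why, following the paper's style, most of these verifications can reasonably be left to the reader with only the key estimates displayed.
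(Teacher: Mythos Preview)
Your proposal is correct and follows essentially the same route as the paper: define the integrand $\varXi_{ts}^{\tau}=\bz_{ts}^{1,\tau}\ast y_{s}^{\cdot}+\bz_{ts}^{2,\tau}\ast y_{s}^{\prime,\cdot_{1},\cdot_{2}}$, compute $\delta_m\varXi$ via the Chen relation and the controlled-path identity, estimate the two resulting terms, and apply the Volterra sewing lemma (then repeat with $\varXi^{qp}$ for part~(iii)). One small clarification: there are in fact no ``lower-order harmless terms'' --- the cancellation between $\bz^{1,\tau}_{tm}\ast\bz^{1,\cdot}_{ms}\ast y'^{\cdot,\cdot}_{s}$ (from $\delta_m\bz^{2,\tau}$) and the $\bz^{1,\cdot}_{ms}\ast y'^{\cdot,\cdot}_{s}$ piece of $y^{\cdot}_{ms}$ is exact, leaving precisely $\delta_m\varXi_{ts}^{\tau}=-\bz_{tm}^{1,\tau}\ast R^{y,\cdot}_{ms}-\bz_{tm}^{2,\tau}\ast y'^{\cdot_1,\cdot_2}_{ms}$, which is what the paper obtains and bounds directly.
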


\begin{rem}
According to our computations (see in particular \eqref{a ineq} below) we believe that Theorem \ref{thm:Volterra integral of controlled path is controlled path} should hold true under the condition $3\rho+\gamma>1$ (vs. $3\rho>1$). We have sticked to the more restrictive assumption $3\rho>1$ in order to be compatible with Definition \ref{def:Volterra Rough path} for $n=2$. 
\end{rem}

\begin{proof}[Proof of Theorem \ref{thm:Volterra integral of controlled path is controlled path}]
We define $\varXi_{vu}^{\tau}=\bz_{vu}^{1,\tau}\ast y_{u}^{\cdot}+\mathbf{z}_{vu}^{2,\tau}\ast y_{u}^{\prime,\cdot_1,\cdot_2},$
where $\mathbf{z}_{vu}^{2,\tau}\ast y_{u}^{\prime,\cdot_1,\cdot_2}$ is
understood according to Theorem \ref{thm:genreal volterra convolution }. Namely, it is readily  checked, whenever $(y,y^\prime)\in \mathscr{D}^{(\alpha,\gamma)}_{\bz^1}$ that $y^\prime\in \mathcal{V}^{\cdot_1,\cdot_2}_{(\alpha,\gamma)}$ where $ \mathcal{V}^{\cdot_1,\cdot_2}_{(\alpha,\gamma)}$ is given in Definition \ref{def: upper vb space}. Therefore Theorem \ref{thm:genreal volterra convolution } enables to define 
\begin{equation}\nonumber
\mathbf{z}_{ts}^{2,\tau}\ast y_{s}^{\prime,\cdot_1,\cdot_2} =\lim_{|\mathcal{P}|\rightarrow 0}\sum_{[u,v]\in \mathcal{P}}\mathbf{z}_{vu}^{2,\tau}\otimes y_{s}^{\prime,u,u}+\delta_{u}\mathbf{z}_{vs}^{2,\tau}\ast y_{s}^{\prime,\cdot_1,\cdot_2}.
\end{equation}
Now that $\varXi$ is properly defined, our next step is to invoke lemma \ref{lem:(Volterra-sewing-lemma)} in order to define 
\begin{equation}\nonumber
w_{ts}^\tau=\int_{s}^{t}k\left(\tau,r\right)y_{r}^{r}dx_{r}=\mathcal{I}\left(\varXi\right)_{ts}.
\end{equation}
To this aim, similarly to the proof of Theorem \ref{extension}, we need to check that $\delta \varXi$ is sufficiently regular. This is what we proceed to do below in order to obtain (\ref{eq: def of V integral}).

We first compute $\delta\varXi^\tau$, where we recall that  $\varXi_{vu}^{\tau}=\bz_{vu}^{1,\tau}\ast y_{u}^{\cdot}+\mathbf{z}_{vu}^{2,\tau}\ast y_{u}^{\prime,\cdot_1,\cdot_2}$. That is, combining elementary algebraic properties of the operator $\delta$ and relation (\ref{p-step conv}) read for $p=1,2$ we get the following relation for $(u,m,v,\tau)\in \Delta_4$,
\begin{eqnarray}
\delta_{m}\varXi_{vu}^{\tau}=-\bz_{vm}^{1,\tau}\ast y_{mu}^{\cdot} -\bz_{vm}^{2,\tau}\ast y_{mu}^{\prime,\cdot_1,\cdot_2}+
\bz_{vm}^{1,\tau}\ast \bz_{mu}^{1,\cdot}\ast y^{\prime,\cdot,\cdot} .\label{eq:delta at controlled Xi}
\end{eqnarray}
Now we resort to the fact that $y$ satisfies (\ref{controlled rel}) in order to write
\[
\bz_{vm}^{1,\tau}\ast y_{mu}^{\cdot}=\bz_{vm}^{1,\tau}\ast \left(\bz_{mu}^{1,\cdot}\ast y_{u}^{\prime,\cdot_1,\cdot_2}\right)+\bz_{vm}^{1,\tau}\ast R_{mu}^{\cdot}.
\]
Plugging this into (\ref{eq:delta at controlled Xi}) we obtain
\begin{equation}\label{relation delta controlled}
\delta_{m}\varXi_{vu}^{\tau}
=-\bz_{vm}^{2,\tau}\ast y_{mu}^{\prime,\cdot_1,\cdot_2} -\bz_{vm}^{1,\tau}\ast R_{mu}^{\cdot}.
\end{equation}
Thanks to relation (\ref{relation delta controlled}), we can now analyze the regularity of $\delta\varXi^\tau$. Indeed,  invoking Theorem \ref{thm:genreal volterra convolution }  we get 
\begin{equation} \label{est1}
|\bz_{vm}^{2,\tau}\ast y_{mu}^{\prime,\cdot_1,\cdot_2}|\leq\| y^{\prime,\cdot_1,\cdot_2}\|_{\left(\alpha,\gamma\right),1,2}
 \| \bz^{2}\|_{\left(2\rho+\gamma,\gamma\right)}|u-m|^{\rho}|\tau-m|^{-\gamma}|v-m|^{2\rho+\gamma}
\end{equation}
and similarly 
\begin{equation}\label{est2}
|\bz_{vm}^{1,\tau}\ast R_{mu}^{\cdot}|\leq\| R\|_{\left(2\rho +\gamma,\gamma\right)}\| \bz^{1}\|_{\left(\alpha,\gamma\right)}|\tau-m|^{-\gamma}|v-m|^{\alpha}|u-m|^{2\rho}.
\end{equation}
Gathering  (\ref{est1}) and (\ref{est2}) into (\ref{relation delta controlled}) and recalling that $\tau>v>m>u$, we thus obtain that 
\begin{equation}\label{a ineq}
|\delta_{m}\varXi^\tau_{vu}|\lesssim \| y,y^\prime\|_{\bz^1,\left(\alpha,\gamma\right)}\| \bz\|_{\left(\alpha,\gamma\right)}|\tau-v|^{-\gamma}|v-u|^{3\rho+\gamma}. 
\end{equation}
 Since $3\rho+\gamma>1$, we can apply the Volterra Sewing Lemma \ref{lem:(Volterra-sewing-lemma)} and define $w_{ts}^\tau:=  \mathcal{I}(\varXi^\tau)_{ts}$. This achieves the proof of (\ref{eq: def of V integral}) and  relation (\ref{eq:bound volterra ointegration}).

Next, we shall prove Inequality (\ref{eq:upper arg reg}). Start to
set $\varXi_{ts}^{qp}=\bz_{ts}^{1,qp}\ast y_{s}^{\cdot}+\bz_{ts}^{2,qp}\ast y_{s}^{\prime,\cdot_1,\cdot_2}$, and
observe that by the exact same computations as above (remember that $u\mapsto\delta_{u}$
acts on the lower argument of a function $f_{t}^{\tau}$) we obtain
\[
\delta_{u}\varXi_{ts}^{qp}=-\bz_{vm}^{2,qp}\ast y_{mu}^{\prime,\cdot_1,\cdot_2}- \bz_{vm}^{1,qp}\ast R_{mu}^{\cdot}.
\]
Thus, the regularity $\delta_{u}\varXi_{ts}^{p,q}$ follows from the assumption \eqref{c1c} of regularity on the Volterra rough path $\bz$  and the controlled path $(y,y^\prime)$ together with  equivalent bounds as in (\ref{est1}) and (\ref{est2}), taking into account the increment in the upper parameters.   We  therefore obtain for $(s,u,t,p,q)\in \Delta_{5}$ and $\eta\in [0,1]$ and $\zeta\in [0,\rho)$ 
\begin{multline}
|\delta_{u}\varXi_{ts}^{qp}|\leq
\left(\| y^{\prime,\cdot_1,\cdot_2}\|_{\left(\alpha,\gamma\right),1,2}\|\mathbf{z}^{2}\|_{\left(2\rho+\gamma,\gamma\right)}+\| R\|_{1,\left(\alpha,\gamma\right)}\| \bz^{1}\|_{\left(\alpha,\gamma\right)}\right)
\\
\times|q-p|^{\eta}|p-t|^{-\eta+\zeta}|p-t|^{-\gamma-\zeta}|t-s|^{3\rho+\gamma}.
\end{multline}
Applying again the Volterra Sewing Lemma \ref{lem:(Volterra-sewing-lemma)}, we now easily  conclude that (\ref{eq:upper arg reg}) holds. 
\end{proof}
\begin{rem}
The definition of a controlled Volterra rough path tells
us that $y^{\prime}:\Delta_{3}\rightarrow V$, i.e.
it takes three ordered time variables as input. However,  the computations of Theorem \ref{thm:Volterra integral of controlled path is controlled path} reveal that when $\left(y,y^{\prime}\right)\in\mathscr{D}_{\bz^{1}}^{\left(\alpha,\gamma\right)}\left(\mathcal{L}\left(E,V\right)\right)$, the controlled derivative of $w_{t}^{\tau}=\int_{0}^{t}k\left(\tau,r\right)y_{r}^{r}dx_{r}$
only depends on two variables.  Specifically we have  $w_{t}^{\prime,\tau,q}=w_{t}^{\prime,q}\equiv y_{t}^{q}$
, which is seen from item (\rm{iv}) in Theorem \ref{thm:Volterra integral of controlled path is controlled path}. One can thus refine Theorem~\ref{thm:Volterra integral of controlled path is controlled path} and state that the Volterra rough integration sends $(y,y^\prime)\in \mathscr{D}_{z}^{(\alpha,\gamma)} $ to a controlled process $(w,w^\prime)\in \hat{ \mathscr{D}}_{z}^{(\alpha,\gamma)}$ where the space $\hat{ \mathscr{D}}_{z}^{(\alpha,\gamma)}$ is defined by 
\begin{equation}\label{eq:def of hat D}
\hat{ \mathscr{D}}_{z}^{(\alpha,\gamma)}(\Delta_2;V):= \{(w,w^\prime)\in \mathscr{D}_{z}^{(\alpha,\gamma)}\,|\, w_s^{\prime,\tau,p}= w_s^{\prime,p}\} .
\end{equation}
The space $\hat{ \mathscr{D}}_{z}^{(\alpha,\gamma)}$ will be used in the composition step below.  

\end{rem}

\begin{prop}\label{prop: composition with reg function}
Let $f\in\mathcal{C}_{b}^{3}\left(V\right)$ and assume $\left(y,y^{\prime}\right)\in\hat{\mathscr{D}}_{z}^{\left(\alpha,\gamma\right)}\left(V\right)$. 
Then the composition $(\varphi,\varphi^{\prime}):=\left(f\left(y\right),y^{\prime}f^\prime\left(y\right)\right)$ is a controlled Volterra path in $\mathscr{D}_{z}^{\left(\alpha,\gamma\right)}\left(V\right)$,
where the derivative $\varphi^{\prime}:\Delta_{3}\rightarrow V$
is given by 
\begin{equation}\label{eq:def of comp derivative}
\Delta_{3}\ni\left(t,p,q\right)\mapsto y_{t}^{\prime,p} f^\prime\left(y_{t}^{q}\right).
\end{equation}
Moreover, there exists a constant  $C=C_{M,\alpha,\gamma,\|f\|_{C^{3}_{b}}} >0$  such that 
\begin{equation}\label{bound for function composed with controlled path}
\|\varphi,\varphi^\prime\|_{z;(\alpha,\gamma)}\leq C \left(1+\|z\|_{(\alpha,\gamma)}\right)^{2}  \left[\left( |y^\prime_{0}|+\|y,y^{\prime}\|_{z,(\alpha,\gamma)}\right)\vee \left( |y^\prime_{0}|+\|y,y^{\prime}\|_{z,(\alpha,\gamma)}\right)^{2}\right] 
\end{equation}
 
\end{prop}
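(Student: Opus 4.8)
The plan is to follow the standard rough-path strategy for showing that a smooth function applied to a controlled path is again controlled, adapted to the Volterra convolution setting. First I would establish the candidate Gubinelli derivative: since $(y,y')\in\hat{\mathscr{D}}_z^{(\alpha,\gamma)}$, the derivative $y'$ effectively has only one upper variable, i.e. $y_t^{\prime,p,q}=y_t^{\prime,p}$, and we set $\varphi_t^\tau=f(y_t^\tau)$ and $\varphi_t^{\prime,p,q}=y_t^{\prime,p}f'(y_t^q)$ as in~\eqref{eq:def of comp derivative}. I would then write the increment $\varphi_{ts}^\tau=f(y_t^\tau)-f(y_s^\tau)$ and perform a first-order Taylor expansion of $f$ around $y_s^\tau$:
\[
\varphi_{ts}^\tau=f'(y_s^\tau)\,y_{ts}^\tau+\int_0^1\big(f'(y_s^\tau+\theta y_{ts}^\tau)-f'(y_s^\tau)\big)d\theta\;y_{ts}^\tau .
\]
Substituting the controlled-path relation~\eqref{controlled rel}, $y_{ts}^\tau=z_{ts}^\tau\ast y_s^{\prime,\tau,\cdot}+R_{ts}^\tau$, into the leading term, the key algebraic point is that $f'(y_s^\tau)$ multiplies the convolution $z_{ts}^\tau\ast y_s^{\prime,\tau,\cdot}$; since the convolution integrates against the upper variable of $y'$ (which after the $\hat{\mathscr{D}}$ reduction is the only remaining dependence) and $f'(y_s^\tau)$ only depends on $\tau=s$ when one evaluates on the diagonal, one checks that $f'(y_s^\tau)\,(z_{ts}^\tau\ast y_s^{\prime,\tau,\cdot})=z_{ts}^\tau\ast(y_s^{\prime,\tau,\cdot}f'(y_s^\cdot))=z_{ts}^\tau\ast\varphi_s^{\prime,\tau,\cdot}$, up to a remainder coming from replacing $f'(y_s^\tau)$ by $f'(y_s^r)$ inside the convolution integral. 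This produces the decomposition $\varphi_{ts}^\tau=z_{ts}^\tau\ast\varphi_s^{\prime,\tau,\cdot}+R_{ts}^{\varphi,\tau}$ with
\[
R_{ts}^{\varphi,\tau}=f'(y_s^\tau)R_{ts}^\tau+\text{(Taylor remainder)}+\text{(commutator term from }f'(y_s^\tau)-f'(y_s^r)\text{ inside }\ast).
\]

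Next I would estimate each piece of $R^\varphi$ in the $\|\cdot\|_{(2\alpha,2\gamma)}$ norm of $\mathcal{V}_2^{(2\alpha,2\gamma)}$, and estimate $\varphi'$ in the $\mathcal{W}_2^{(\alpha,\gamma)}$ norm. For $\varphi'=y^{\prime,\cdot_1}f'(y^{\cdot_2})$ this is a product estimate: one uses that $f\in\mathcal{C}^3_b$, so $f'$ is bounded and Lipschitz, together with the regularity of $y'$ (in $\mathcal{W}_2^{(\alpha,\gamma)}$) and of $y$ (inherited, through~\eqref{controlled rel} and the bounds $\|z\|_{(\alpha,\gamma),1}$, $\|z\|_{(\alpha,\gamma),1,2}$, from $z$, $y'$ and $R$). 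The $f'(y_s^\tau)R_{ts}^\tau$ term inherits the $(2\alpha,2\gamma)$ regularity of $R$ directly with the factor $\|f\|_{C^1_b}$. The Taylor remainder $\int_0^1(f'(y_s^\tau+\theta y_{ts}^\tau)-f'(y_s^\tau))d\theta\,y_{ts}^\tau$ is bounded by $\|f''\|_\infty|y_{ts}^\tau|^2$, and since $y\in\mathcal{V}^{(\alpha,\gamma)}$ one has $|y_{ts}^\tau|\lesssim(\tau-t)^{-\gamma}(t-s)^\alpha\wedge(\tau-s)^\rho$, so its square is of the required $(2\alpha,2\gamma)$-type (with the elementary observation that $((\tau-t)^{-\gamma}(t-s)^\alpha)^2=(\tau-t)^{-2\gamma}(t-s)^{2\alpha}$ and $((\tau-s)^\rho)^2=(\tau-s)^{2\rho}$). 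The commutator term is controlled using Lemma~\ref{lem:Nice inequlaity } and the Lipschitz property of $f'$: inside the convolution $z_{ts}^\tau\ast(\cdot)$ one replaces $f'(y_s^\tau)$ by $f'(y_s^r)$ at the cost of $\|f''\|_\infty|y_s^\tau-y_s^r|\lesssim|y^{\tau r}_s|$, and one invokes the upper-variable regularity encoded in $\|y\|_{(\alpha,\gamma),1,2}$ together with the bound~\eqref{reg of conv 25} from Theorem~\ref{thm: Integral product well defined} to absorb the extra $r$-dependence; here one uses crucially that $\rho>0$ so that the exponents add up to something with the correct singular/Hölder profile. A symmetric computation handles the increments $R_{ts}^{\varphi,qp}$ and $\varphi'$-estimates in the two upper variables, i.e. the $\|\cdot\|_{(\alpha,\gamma),1,2}$-type and $\|\cdot\|_{(\alpha,\gamma),2}$-type quantities entering~\eqref{lllll} and~\eqref{eq:controlled norm def}.

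Finally, I would collect the estimates. Each term is bounded by a product of $\|f\|_{C^3_b}$, a power of $\|z\|_{(\alpha,\gamma)}$ (at most two, coming from the single application of the convolution estimate~\eqref{reg of conv 25} or~\eqref{eq:reg of conv product} combined with the one already present in the controlled-path structure), and a polynomial in $|y'_0|+\|y,y'\|_{z,(\alpha,\gamma)}$ of degree at most two — the degree two arising precisely from the quadratic Taylor remainder $|y_{ts}^\tau|^2$ and from products of the form $y'\cdot(\text{increment of }y)$. Taking $C=C_{M,\alpha,\gamma,\|f\|_{C^3_b}}$ and using $a\vee a^2$ to dominate uniformly both the linear and quadratic contributions yields~\eqref{bound for function composed with controlled path}. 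I expect the main obstacle to be the bookkeeping of the commutator term $f'(y_s^\tau)-f'(y_s^r)$ inside the convolution product: unlike the classical rough-path case where one simply commutes a matrix past a tensor, here the convolution integrates against the $r$-variable and $f'$ is evaluated at $y_s^r$, so one must carefully quantify the extra regularity picked up from the difference $y_s^\tau-y_s^r$ (which lives in the upper-variable Hölder class, with its own singularity $(\tau-r)^{-\eta}$-type weights) and check that it is exactly absorbed by the room left in the Volterra-Young estimate~\eqref{reg of conv 25}; this is where the constraint $\rho>\tfrac13$ and the precise form of the norms in Definitions~\ref{holder norms} and~\ref{def:mod Volterra holder} are used in an essential way.
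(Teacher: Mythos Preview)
Your overall strategy---Taylor expand $f(y^\tau)_{ts}$, insert the controlled relation for $y$, and estimate the resulting remainder pieces and the $\varphi'$ norms---is exactly what the paper does. The estimates you outline for $f'(y_s^\tau)R_{ts}^\tau$, for the quadratic Taylor remainder $\tfrac12(y_{ts}^\tau)^{\otimes2}f''(c_{ts}^\tau)$, and for $\|\varphi'\|_{(\alpha,\gamma)}$ via product/addition--subtraction arguments are all correct and match the paper's proof.

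There is, however, one point where you overcomplicate the argument. You anticipate a ``commutator term from $f'(y_s^\tau)-f'(y_s^r)$ inside the convolution'' and call it the main obstacle. This term does not arise. The definition~\eqref{eq:def of comp derivative} reads $\varphi_s^{\prime,q,p}=y_s^{\prime,p}f'(y_s^q)$: the factor $f'$ is evaluated at the \emph{first} upper variable $q$, which in the controlled relation $z_{ts}^\tau\ast\varphi_s^{\prime,\tau,\cdot}$ is the fixed parameter $\tau$, not the integration variable. Since $(y,y')\in\hat{\mathscr D}_z^{(\alpha,\gamma)}$ gives $y_s^{\prime,\tau,\cdot}=y_s^{\prime,\cdot}$, and since $f'(y_s^\tau)$ is constant in the convolution variable, one has the \emph{exact} identity
\[
f'(y_s^\tau)\bigl(z_{ts}^\tau\ast y_s^{\prime,\cdot}\bigr)=z_{ts}^\tau\ast\bigl(y_s^{\prime,\cdot}f'(y_s^\tau)\bigr)=z_{ts}^\tau\ast\varphi_s^{\prime,\tau,\cdot},
\]
with no remainder. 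Consequently the remainder $R^{f(y),\tau}_{ts}$ consists only of the two terms you already identified (see the paper's~\eqref{eq:Rf(y) relation} and~\eqref{Rem f(y)}), and the proof is shorter than you expect. In particular, no appeal to the Volterra--Young estimate~\eqref{reg of conv 25} or to $\rho>\tfrac13$ is needed in this proposition; only $\rho>0$ and $f\in\mathcal C^3_b$ are used.
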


\begin{proof}
Let us first prove the algebraic part of the proposition, namely relation \eqref{eq:def of comp derivative}. We start to decompose the increment $f(y^q)_{ts}$ into 
\begin{equation}\nonumber
f(y^q)_{ts}=y_{ts}^\tau f^\prime(y^q_s)+\left[f(y^q)_{ts}-y_{ts}^\tau f^\prime(y^q_s)\right].
\end{equation}
We then resort to relation \eqref{controlled rel} in order to write 
\begin{equation}\nonumber
f(y^q)_{ts}=z_{ts}^q \ast y_s^{\prime,q,\cdot}f^\prime(y_s^q)+R^{f(y),q}_{ts},	
\end{equation}
where we have set $R^y$ to be the remainder of $y$ in \eqref{controlled rel}, and 
\begin{equation}\label{eq:Rf(y) relation}
R^{f(y),q}_{ts}=\left[f(y^q)_{ts}-y_{ts}^\tau f^\prime(y^q_s)\right]+ R^{y,q}_{ts} f^\prime(y^q_s).
\end{equation}
In addition, recalling that $(y,y^\prime)\in \hat{\mathcal{P}}^{(\alpha,\gamma)}$ the path $y^{\prime,q,\cdot}$ does not depend on $q$. Hence we get 
\begin{equation}\label{eq: corect Rf(y)}
f(y^q)_{ts}=z_{ts}^q \ast y_s^{\prime,\cdot}f^\prime(y_s^q)+R^{f(y),q}_{ts}.
\end{equation}
We now set $\varphi^{\prime,q,p}_s = y^{\prime,p}_s f^\prime(y^q_s)$. With relation~\eqref{convolution in 1d} in mind it is readily checked that~\eqref{eq: corect Rf(y)} can be recast as
\begin{equation}\nonumber
f(y^q)_{ts}=z_{ts}^q \ast \varphi^{\prime,q,\cdot}_s +R_{ts}^{f(y),q},
\end{equation}
which corresponds to our claim in \eqref{eq:def of comp derivative}. 

Let us now focus on Inequality \eqref{bound for function composed with controlled path}.  To this end, recall that  the norm $\|\varphi,\varphi^\prime\|_{z;(\alpha,\gamma)}$ is defined by \eqref{eq:controlled norm def}. Thus we have 
\begin{equation}\label{eq:controlled norm relation}
\|\varphi,\varphi^\prime\|_{z;(\alpha,\gamma)}=\|f\left(y\right),y^{\prime}f\left(y\right)\|_{z,\left(\alpha,\gamma\right)}=\|y^{\prime,\cdot_2}f\left(y^{\cdot_1}\right)\|_{\left(\alpha,\gamma\right)}+\|R^{f\left(y\right)}\|_{\left(2\rho+\gamma,\gamma\right)}.
\end{equation}
We shall analyse the two terms in the right hand side of \eqref{eq:controlled norm relation} separately. We start with the derivative $\varphi^\prime$, for which we will bound the two norms given by \eqref{eq:derivative norm1} and \eqref{upper vb norm}. Specifically,  
observe first that the difference in the lower variable for the derivative $\varphi^\prime$ is given by 
\[
\left(y^{\prime}f\left(y\right)\right)_{ts}^{q,p}=y_{t}^{\prime,p}f\left(y_{t}^{q}\right)-y_{s}^{\prime,p}f\left(y_{s}^{q}\right),
\]
and  thus by addition and subtraction of $y_{s}^{\prime,p}f\left(y_{t}^{q}\right)$
it is readily checked that the following bound is satisfied
\begin{equation}\label{control first f}
\| y^{\prime,\cdot_2}f\left(y^{\cdot_1}\right)\|_{\left(\alpha,\gamma\right),1}\lesssim\| f\|_{\mathcal{C}_{b}^{1}}\left(\| y^{\prime,\cdot_2}\|_{\left(\alpha,\gamma\right)}+\| y\|_{\left(\alpha,\gamma\right)}\right).
\end{equation}

Let us now consider the quantity $\| y^{\prime,\cdot_2}f\left(y^{\cdot_1}\right)\|_{\left(\alpha,\gamma\right),1,2}$.  To this end, we will in two stages encounter first order Taylor expansions, and thus we recall that for a differentiable function $f$ on $V$ we have for $a,b\in V$ 
\begin{equation}
g(a)-g(b)=L(a,b),\quad {\rm where} \quad L(a,b)=\int_0 ^1 Df(\theta a+(1-\theta)b)d\theta (a-b).
\end{equation}  
 We now need to control the simultaneous increment in the upper and lower variables according to \eqref{upper vb norm}. Let us first consider $y_{t}^{\prime,p}f\left(y_{t}^{q}\right)$ with fixed $p$ and increments in the variables $t$ and $q$.  By a simple addition and subtraction argument, we obtain the identity
\begin{equation}\label{many vb rel}
\left(y_{\cdot}^{\prime,p}f\left(y_{\cdot}^{\cdot}\right)\right)_{ts}^{qr} = y_t^{\prime,p}L(y_t^q,y_t^r)y_t^{qr}-y_s^{\prime,p}L(y_s^q,y_s^r)y_s^{qr},
\end{equation}
where $L$ is given as above. Observe now that by adding and subtracting the quantity $ y_s^{\prime,p}L(y_t^q,y_t^r)y_t^{qr}$ to the right hand side in of \eqref{many vb rel}, we obtain that 
\begin{equation}\label{up lo inc exp 1}
y_t^{\prime,p}L(y_t^q,y_t^r)y_t^{qr}-y_s^{\prime,p}L(y_s^q,y_s^r)y_s^{qr} =y_{ts}^{\prime,p}F^{qr}_t+y_s^{\prime,p}(F^{qr}_t-F^{qr}_s),
\end{equation}
where $F_t^{qr}:=L(y_t^q,y_t^r)y_{t}^{qr}$. Due to the boundedness assumption on $f$ and its derivatives, it is clear that $|F_t^{qr}|\lesssim \|f\|_{\mathcal{C}^1_b}\|y\|_{(\alpha,\gamma),1,2}|q-r|^\eta|r-t|^{-\eta}$. Furthermore, it is readily seen that 
\begin{equation}\label{bL and L y}
F^{qr}_t-F^{qr}_s={\bf L}(y_t^q,y_t^r,y_s^q,y_s^r)y_t^{qr}+L(y_t^q,y_t^r)y_{ts}^{qr},
\end{equation}
Where ${\bf L}(a,b,a',b')$ is given as the remainder of a first order two-variable Taylor approximation of $L$, in the sense that  $L(a,b)-L(a',b')={\bf L}(a,b,a',b')$,  which is explicitly given by 
\begin{multline*}
{\bf L}(a,b,a',b')\\
=
 \int_0^1 \int_0^1 D^2g(\theta'(\theta a+(1-\theta)b)+(1-\theta')(\theta a'+(1+\theta)b'))d\theta'( \theta (a-a')+(1-\theta)(b-b')) d\theta.
\end{multline*}

 Again, due to the boundedness of $f$ and its derivatives, for any $\eta\in [0,1]$ we obtain that  
\begin{equation}\label{F inc bound}
|F^{qr}_t-F^{qr}_s|\lesssim \|f\|_{\mathcal{C}^2_b}\|y\|_{(\alpha,\gamma)1,2}|q-r|^\eta|r-t|^{-\eta}\left[|r-t|^{-\gamma}|t-s|^\alpha \wedge |r-s|^{\rho}\right]
\end{equation}
Inserting relation \eqref{bL and L y} into \eqref{up lo inc exp 1}, and invoking the bound in \eqref{F inc bound} as well as the regularity of $y^\prime$ and $y$, we obtain that 
\begin{equation}
\| y^{\prime,\cdot_2}f\left(y^{\cdot_1}\right)\|_{\left(\alpha,\gamma\right),1,2,<} \leq \|f\|_{\mathcal{C}^2_b}\left(\|y^{\prime,\cdot_1,\cdot_2}\|_{(\alpha,\gamma)}+\|y\|_{(\alpha,\gamma)}\right). 
\end{equation} 
A similar argument can now be used to also show that $\| y^{\prime,\cdot_2}f\left(y^{\cdot_1}\right)\|_{\left(\alpha,\gamma\right),1,2,>}<\infty$, and thus it follows by  \eqref{upper vb norm} that 
\begin{equation}
\| y^{\prime,\cdot_2}f\left(y^{\cdot_1}\right)\|_{\left(\alpha,\gamma\right),1,2}\leq \|f\|_{\mathcal{C}^2_b}\left(\|y^{\prime,\cdot_1,\cdot_2}\|_{(\alpha,\gamma)}+\|y\|_{(\alpha,\gamma)}\right).
\end{equation}

Let us now handle the term $\|R^{f(y)}\|_{(2\rho+\gamma,\gamma)}$ in equation \eqref{eq:controlled norm relation}. More precisely  recalling that the norm
 $\|\cdot\|_{(2\rho+\gamma,\gamma)}$ is given by \eqref{eq:three variable gen norm},
  let us first bound the quantity $\|R^{f(y)}\|_{(2\rho+\gamma,\gamma),1}$. 
 Towards this aim, we go back to the definition \eqref{eq:Rf(y) relation} of $R^{f(y)}$  and apply Taylor's expansion in a standard way.
That is, define $c_{t,s}^{\tau}(a)=ay_{s}^\tau +(1-a)y_t^\tau$, and observe that  
\begin{equation}\label{Rem f(y)}
R_{ts}^{f\left(y\right),\tau}=R_{ts}^{y,\tau} f^{\prime}\left(y_{s}^{\tau}\right)+\frac{1}{2}\left(y_{ts}^{\tau}\right)^{\otimes2}\int_0^1 f^{\prime\prime}\left(c_{t,s}^{\tau}(a)\right)da. 
\end{equation}  
The regularity of $R^{f(y)}$ for the $\|\cdot\|_{(2\rho+\gamma,\gamma),1}$ norm now follows from the boundedness of the second derivative of $f$, the squared regularity of the increment of $y$ and the regularity of $R^{y}$. 

Next, we will compute the regularity in the upper argument for  $R^{f(y)}$, which corresponds to the semi-norm $\|\cdot\|_{(2\rho+\gamma,\gamma),1,2}$ in \eqref{eq: two variable  function norm change}. In particular, we will consider the increment 
\begin{multline}\label{upper arg Rf(y)}
R^{qp,f(y)}_{ts}
=R_{ts}^{p,y}f^{\prime}\left(y_{s}\right)^{qp}+R_{ts}^{qp,y}f^{\prime}\left(y_{s}^{p}\right) \\
+\frac{1}{2}\left(y_{ts}^{p}\right)^{\otimes2}\int_0^1 f^{\prime\prime}(c_{t,s}(a))^{qp}da+\frac{1}{2}(\left(y_{ts}^{q}\right)^{\otimes2}-\left(y_{ts}^{p}\right)^{\otimes2})\int_0^1 f^{\prime\prime}(c^{p}_{t,s}(a)).
\end{multline}

Using that $f\in C^{3}_{b}$ and $a^{2}-b^{2}=(a+b)(a-b)$, it follows from a combination of  (\ref{upper arg Rf(y)})  and (\ref{Rem f(y)}) that 
\begin{equation}\nonumber
\| R^{f\left(y\right)}\|_{\left(2\rho+\gamma,\gamma\right)}=\| R^{f\left(y\right)}\|_{\left(2\rho+\gamma,\gamma\right),1}+\| R^{f\left(y\right)}\|_{\left(2\rho+\gamma,\gamma\right)1,2}\leq\| f\|_{\mathcal{C}_{b}^{3}}\left(\| R^{y}\|_{\left(2\rho+\gamma,\gamma\right)}+\| y\|_{\left(\rho+\gamma,\gamma\right)}^{2}\right).
\end{equation}
We now use the fact that the regularity of the controlled Volterra path is inherited by the noise, as discussed in Remark \ref{rem: inherited reg} and see that 
\begin{equation}\label{regualrity of cont path}
\| y\|_{\left(\alpha,\gamma\right)}\leq\left(|y_{0}^{\prime}|+\| y^{\prime,\cdot_2}\|_{\left(\alpha,\gamma\right)}\right)(\| z\|_{\left(\alpha,\gamma\right)}+\| R^{y}\|_{\left(2\rho+\gamma,\gamma\right)}).
\end{equation}
Combining the information from (\ref{regualrity of cont path}), (\ref{Rem f(y)}), and (\ref{control first f}) yields (\ref{bound for function composed with controlled path}).  Namely, it follows that
\begin{align}\nonumber
&\| f\left(y\right),f\left(y\right)y^{\prime}\|_{z,\left(\alpha,\gamma\right)}
\\\nonumber
&\leq C_{\|f\|_{C^{3}_{b}},\alpha,\gamma}\left(1+\| z\|_{\left(\alpha,\gamma\right)}\right)^{2}\left[\left(|y_{0}^{\prime}|+\| y,y^{\prime}\|_{z,\left(\alpha,\gamma\right)}\right)^{2}\vee\left(|y_{0}^{\prime}|+\| y,y^{\prime}\|_{z,\left(\alpha,\gamma\right)}\right)\right].
\end{align}
\end{proof}

\begin{rem}
We point out that we require $f\in C^{3}_{b}$ in order to compose $f$  with a controlled Volterra path 
$(y,y^{\prime})\in \hat{\mathscr{D}}_{z}^{(\alpha,\gamma)}$. 
This requirement is one degree of differentiation more than what is standard in classical rough path theory (see e.g. \cite[Section 7]{FriHai}). The reason for this comes from the fact that we also need regularity in the upper argument of the controlled Volterra paths, and thus we see that in order to bound the term $f^{\prime\prime}(c)^{pq}$ in \eqref{upper arg Rf(y)},  we need $f\in C^{3}_{b}$.
\end{rem}

\subsection{Rough Volterra Equations}\label{sec:rough volt eq}

Based on the concept of controlled Volterra paths and Volterra integration introduced in Section \ref{non-linear setting}, we are now ready to  prove existence and uniqueness of non-linear Volterra equations. As we have seen so far, the results that we obtain are directly comparable to those known from the classical setting under substitution of the tensor product with the convolution product. 

\begin{thm}\label{existence and uniq}
Let $\mathbf{z}\in\mathscr{V}^{\left(\alpha,\gamma\right)}\left(E\right)$
with $\alpha-\gamma>\frac{1}{3}$. Assume that $\bz$ satisfies the same hypothesis as in Theorem~\ref{thm:Volterra integral of controlled path is controlled path} and suppose $f\in\mathcal{C}_{b}^{4}\left(V;\mathcal{L}\left(E,V\right)\right)$.
Then there exists a unique Volterra solution in $\hat{\mathscr{D}}_{\bz^{1}}^{\left(\alpha,\gamma\right)}\left(V\right)$
to the equation 
\begin{equation}\label{volterra eq in thm}
y_{t}^{\tau}=y_{0}+\int_{0}^{t}k\left(\tau,r\right)f\left(y_{r}^{r}\right)dx_{r},\,\,\,\left(t,\tau\right)\in\Delta_{2}\left(\left[0,T\right]\right),\,\,\,\,y_{0}\in E,
\end{equation}
where the integral is understood as a rough Volterra integral given in Theorem \ref{thm:Volterra integral of controlled path is controlled path}.
\end{thm}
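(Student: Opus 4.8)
The plan is to realise the solution of~\eqref{volterra eq in thm} as the fixed point of a Picard-type map on a ball of controlled Volterra paths, following Gubinelli's scheme for rough differential equations, with the tensor product replaced by the convolution product $\ast$ and the classical sewing lemma replaced by Lemma~\ref{lem:(Volterra-sewing-lemma)}; the only genuinely Volterra-specific issue will be the globalisation in time.

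\textbf{Step 1: the solution map.} Given $(y,y^{\prime})$ in the space $\hat{\mathscr{D}}_{\bz^{1}}^{(\alpha,\gamma)}\bigl(\mathcal{L}(E,V)\bigr)$ of~\eqref{eq:def of hat D}, Proposition~\ref{prop: composition with reg function} shows that $\bigl(f(y),\,y^{\prime}f^{\prime}(y)\bigr)$ is again a controlled Volterra path (in particular $f(y)\in\mathcal{V}^{\cdot_{1},\cdot_{2}}_{(\alpha,\gamma)}$), so by Theorem~\ref{thm:Volterra integral of controlled path is controlled path} the rough Volterra integral $w^{\tau}_{t}:=\int_{0}^{t}k(\tau,r)f(y^{r}_{r})\,dx_{r}$ is well defined, belongs to $\hat{\mathscr{D}}_{\bz^{1}}^{(\alpha,\gamma)}(V)$, and has Gubinelli derivative $w^{\prime,\tau,p}_{t}=f(y^{p}_{t})$. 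Hence the map $\Gamma(y,y^{\prime}):=\bigl(y_{0}+w,\,f(y)\bigr)$ sends $\hat{\mathscr{D}}_{\bz^{1}}^{(\alpha,\gamma)}$ into itself, and by construction of the integral a pair $(y,y^{\prime})$ is a fixed point of $\Gamma$ exactly when $y^{\prime}=f(y)$ and $y$ solves~\eqref{volterra eq in thm}. Thus it suffices to produce a unique fixed point of $\Gamma$.

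\textbf{Step 2: local existence and uniqueness.} Work on a short interval $[0,T_{1}]$. Combining the integration estimate~\eqref{eq:bound volterra ointegration} (together with its upper-variable analogue) with the composition estimate~\eqref{bound for function composed with controlled path}, and using that the ``value at the starting time'' parts of the controlled norm are bounded by the fixed data plus an $O(T_{1}^{\rho})$ term while $f$ is bounded, one obtains a bound of the form $\|\Gamma(y,y^{\prime})\|_{\bz^{1},(\alpha,\gamma)}\le C_{0}+C_{1}\,T_{1}^{\rho}\bigl(R+R^{2}\bigr)$ on the ball $B_{R}=\{(y,y^{\prime}):y_{0},y_{0}^{\prime}\text{ fixed},\ \|y,y^{\prime}\|_{\bz^{1},(\alpha,\gamma)}\le R\}$, where $C_{0}$ depends only on $|y_{0}|,|y_{0}^{\prime}|,\|\bz\|_{(\alpha,\gamma)}$ and $\|f\|_{C^{4}_{b}}$. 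Choosing $R=2C_{0}$ and then $T_{1}$ small forces $\Gamma$ to map $B_{R}$ into itself. For the contraction one applies the difference versions of Proposition~\ref{prop: composition with reg function} and Theorem~\ref{thm:Volterra integral of controlled path is controlled path} on $B_{R}$ --- here $f\in C^{4}_{b}$ rather than $C^{3}_{b}$ is used, to carry one extra derivative in the upper arguments --- obtaining $\|\Gamma(y,y^{\prime})-\Gamma(\tilde y,\tilde y^{\prime})\|_{\bz^{1},(\alpha,\gamma)}\lesssim T_{1}^{\rho}\|(y,y^{\prime})-(\tilde y,\tilde y^{\prime})\|_{\bz^{1},(\alpha,\gamma)}$ after shrinking $T_{1}$ further. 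Banach's fixed point theorem then yields a unique controlled solution of~\eqref{volterra eq in thm} on $[0,T_{1}]$.

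\textbf{Step 3: globalisation, and the main difficulty.} Because of the memory of the kernel one cannot restart the equation at $T_{1}$ with a constant initial condition: for $t\in[T_{1},2T_{1}]$ equation~\eqref{volterra eq in thm} becomes $y^{\tau}_{t}=\psi^{\tau}+\int_{T_{1}}^{t}k(\tau,r)f(y^{r}_{r})\,dx_{r}$ with $\psi^{\tau}=y_{0}+\int_{0}^{T_{1}}k(\tau,r)f(y^{r}_{r})\,dx_{r}$, i.e. the same equation but with a $\tau$-dependent initial profile $\psi$. Since the integration variable lies in $[0,T_{1}]$ while $\tau\ge T_{1}$, the kernel $k(\tau,\cdot)$ is non-singular there and $\psi$ is a regular function of $\tau$ whose relevant norm is bounded uniformly --- only in terms of $|y_{0}|$, $\|f\|_{\infty}$ and $\|\bz\|_{(\alpha,\gamma)}$ --- thanks to the boundedness of $f$. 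A mild extension of Steps~1--2 allowing such a regular $\tau$-dependent initial profile then applies; the profile enters only as the affine part of the controlled-path space and does not affect the contraction. As the admissible step $T_{1}$ depends only on $\|\bz\|_{(\alpha,\gamma)}$, $\|f\|_{C^{4}_{b}}$ and the uniform bound on the successive profiles, finitely many steps cover $[0,T]$, and concatenating the local solutions (using local uniqueness at each stage) produces the unique solution in $\mathscr{D}_{\bz^{1}}^{(\alpha,\gamma)}([0,T];V)$. The point that must be checked with care --- and this is the main obstacle, absent from the classical theory --- is that the successive profiles $\psi$ stay in the correct Volterra--H\"older class with uniformly bounded norms, so that the local horizon $T_{1}$ does not shrink along the induction; note also that the weighted-norm ($e^{-\lambda t}$) device that globalises smooth or linear Volterra equations is unavailable here because $f$ is nonlinear.
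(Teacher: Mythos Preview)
Your proposal follows the same overall architecture as the paper's proof: a Picard map on controlled Volterra paths, local fixed point on a short interval, then iteration with a $\tau$-dependent initial profile to reach $[0,T]$, with the uniform step size secured by boundedness of $f$. The one technical difference worth flagging is how the small contraction factor is extracted. You work directly in $\mathscr{D}_{\bz^{1}}^{(\alpha,\gamma)}$ and claim a factor $T_{1}^{\rho}$ (the Friz--Hairer device: since both paths share initial data, sup-norms of differences pick up $T_{1}^{\rho}$, and this feeds through the controlled decomposition). The paper instead fixes $\beta<\alpha$ with $\beta-\gamma>\tfrac{1}{3}$, runs the fixed-point argument in $\mathscr{D}_{\bz^{1}}^{(\beta,\gamma)}$, and obtains the factor $\bar{T}^{\alpha-\beta}$ by comparing the actual $\alpha$-regularity produced by $\bz$ against the coarser $\beta$-norm used for the contraction (see in particular how the derivative term $\|F\|_{(\beta,\gamma)}$ is handled via \eqref{eq:contraction second ineq}); the solution is then upgraded to $\mathscr{D}_{\bz^{1}}^{(\alpha,\gamma)}$ a posteriori using inherited regularity. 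Both devices are standard and either works here, but the $\beta<\alpha$ trick is slightly cleaner in the Volterra setting because the derivative part of the controlled norm does not directly carry a power of $T$ after composition, and the trick sidesteps the chain of sup-norm estimates you would otherwise need. Your treatment of the globalisation step---with the explicit observation that $\psi^{\tau}$ involves only integration over $[0,T_{1}]$ against a kernel evaluated at $\tau\ge T_{1}$, hence is uniformly regular---is somewhat more explicit than the paper's, which simply invokes the trapezoid parametrisation $\Delta_{2}^{T}([k\bar{T},(k+1)\bar{T}])$ and boundedness of $f$.
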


\begin{proof}
The parameter $(s,\tau)$ we consider in this proof sits in a small variation of the simplex $\Delta_2$ defined by \eqref{simplex}. Namely we define the trapezoid
\begin{equation}\label{trapezoid}
\Delta_{2}^{T}\left(\left[a,b\right]\right)=\left\{ \left(s,\tau\right)\in\left[a,b\right]\times\left[0,T\right]|a\leq s\leq\tau\leq T\right\} ,
\end{equation}
and note that the first component of $\left(s,\tau\right)\in \Delta^T_2([a,b])$
is restricted to $\left[a,b\right]$ and the second component to $\left[0,T\right]$. For simplicity, assume that $\|\bz\|_{(\alpha,\gamma)}  \leq M\in \mathbb{R}_ {+}$. Furthermore, throughout the proof we will consider a subset of $\hat{\mathscr{D}}_{\bz^{1}}^{\left(\beta,\gamma\right)}\left(\Delta_{2}^{T}\left(\left[0,\bar{T}\right]\right);V\right) $ of paths $(y,y^\prime)$ starting in $(y_0,f(y_0))$. With a slight abuse of notation we still denote this subset by  $\hat{\mathscr{D}}_{\bz^{1}}^{\left(\beta,\gamma\right)}\left(\Delta_{2}^{T}\left(\left[0,\bar{T}\right]\right);V\right) $.

We start by considering $\bar{T},\beta$ such that $0<\bar{T}\leq T$ and $\beta<\alpha$ and $\beta-\gamma>\frac{1}{3}$ (note that this is made possible thanks to the fact that $\alpha-\gamma>\frac{1}{3}$ ). With Definition   \ref{Volterra controlled path} and our notation \eqref{trapezoid} in mind, we introduce a mapping
\begin{equation}
\mathcal{M}_{\bar{T}}:\hat{\mathscr{D}}_{\bz^{1}}^{\left(\beta,\gamma\right)}\left(\Delta_{2}^{T}\left(\left[0,\bar{T}\right]\right);V\right)\rightarrow \hat{\mathscr{D}}_{\bz^{1}}^{\left(\beta,\gamma\right)}\left(\Delta_{2}^{T}\left(\left[0,\bar{T}\right]\right);V\right)
\end{equation}
such that for all $\left(y,y^{\prime}\right)\in\hat{\mathscr{D}}_{\bz^{1}}^{\left(\beta,\gamma\right)}\left(V\right)$
we have 
\[
\mathcal{M}_{\bar{T}}\left(y,y^{\prime}\right)=\left\{ 
\lp y_{0}+\int_{0}^{t}k\left(\tau,r\right)f\left(y_{r}^{r}\right)dx_{r}, \, f\left(y_{t}^{\tau}\right) \rp
\Big| \left(t,\tau\right)\in\Delta_{2}^{T}\left(\left[0,\bar{T}\right]\right)\right\}. 
\]
Our aim is to prove that if $\bar{T}$ is chosen to be small enough, then $\mathcal{M}_{\bar{T}}$ is a contraction. A first step in this direction is obtained by a direct application of Theorem \ref{thm:Volterra integral of controlled path is controlled path}, where the norms are restricted to $\Delta_2^T([0,\bar{T}])$.
 With the additional notation 
\begin{equation}\label{ww map}
\left(s,t,\tau\right)\mapsto\left(w_{ts}^{\tau},w_{ts}^{\prime,\tau}\right)=\mathcal{M}_{\bar{T}}\left(y,y^{\prime}\right)_{ts}^{\tau},
\end{equation}
we easily get 
\[
\| w,w^{\prime}\|_{\bz^{1};\left(\beta,\gamma\right)}\leq\| f\left(y\right),f\left(y\right)f^{\prime}\left(y\right)\|_{\bz^{1},\left(\beta,\gamma\right)}\| \bz\|_{\left(\alpha,\gamma\right)}\bar{T}^{\beta-\gamma},
\]
where we recall our notation \eqref{eq:bz notation} for $\| \bz\|_{\left(\alpha,\gamma\right)}$. Furthermore,  it follows from the fact that any composition of a $C^{3}_{b}$ function with a controlled Volterra path in $\hat{\mathscr{D}}^{(\beta,\gamma)}_{\bz^1}$ is again a Volterra path (see Proposition \ref{prop: composition with reg function}) that
\begin{equation}
\| w,w^{\prime}\|_{\left(\beta,\gamma\right)}\leq C \left[\left(|y^\prime_{0}|+\| y,y^{\prime}\|_{\left(\beta,\gamma\right),\bz^{1}}\right)^{2}\vee\left(|y^\prime_{0}|+\| y,y^{\prime}\|_{\left(\beta,\gamma\right),\bz^{1}}\right)\right]\|\mathbf{z}\|_{\left(\alpha,\gamma\right)}\bar{T}^{\beta-\alpha},\label{eq:invariance of ball inequality}
\end{equation}
where we recall that we assume $\|\mathbf{z}\|_{\left(\alpha,\gamma\right)}\leq M$. 

Next we will show that there exists a ball of radius $1$ centred at a trivial element in $\hat{\mathscr{D}}_{\bz^{1}}^{(\beta,\gamma)}(\Delta_{2}^{T}([0,\bar{T}]);V)$, which is left invariant by $\mathcal{M}_{\bar{T}}$, provided that $\bar{T}$ is small enough. Namely  consider the trivial path $(t,\tau)\mapsto (c_t^\tau,c^{\prime,\tau,\cdot}_t)$ defined in the following way
\begin{equation}\nonumber
\left(c_{t}^{\tau},c_{t}^{\prime,\tau,\cdot}\right)=\left(y_{0}+\bz_{t0}^{1,\tau}f\left(y_{0}\right),f(y_{0})\right),
\end{equation}
where we recall that $y_0$ is the element in $V$ such that $y^\tau_0=y_0$ for all $\tau\in [0,T]$. Note that this element satisfies $\|c,c^\prime\|_{\bz^1,(\beta,\gamma)}=0$, due to invariance of Hölder norms to translations by constants, and that $R^{c,\tau}_{ts}=0$ for all $(s,t,\tau)\in \Delta_3([0,T])$. 
 Next consider the unit ball $\mathcal{B}_{\bar{T}} $ centred at the element $(c,c^\prime)$ of $ \hat{\mathscr{D}}_{\bz^{1}}^{(\beta,\gamma)}(\Delta_{2}^{T}([0,\bar{T}]);V)$ defined by 
\begin{multline}\label{eq:ball}
\mathcal{B}_{\bar{T}}= 
\Big\{ \left(y,y^{\prime}\right)\in\hat{\mathscr{D}}_{\bz^{1}}^{\left(\beta,\gamma\right)}\left(\Delta_{2}^{T}\left(\left[0,\bar{T}\right]\right);V\right)
\Big| \, y_{0}^{\tau}=y_{0},\,\, 
\text{and } y^{\prime,\tau,\cdot}_0=f(y_0), \\
 \text{with}\,\,\,\,\| y-c,y^{\prime}-c^{\prime}\|_{\bz^{1},\left(\beta,\gamma\right)}\leq1
  \Big\} .
\end{multline}
Again we observe that, thanks to the invariance of H\"older norms by translations by constants and according to the fact that $R^{c,\tau}_{ts}=0$ for all $(s,t,\tau)\in \Delta_3([0,T])$, we have 
\begin{equation}\nonumber
\|y,y^\prime\|_{\bz^1,(\beta,\gamma)}=\|y-c,y^\prime-c^\prime\|_{\bz^1,(\beta,\gamma)}
\end{equation}
 for all $(y,y^\prime)\in \mathcal{B}_{\bar{T}}$ defined as in \eqref{eq:ball}.

Consider now $(y,y^\prime)\in \mathcal{B}_{\bar{T}}$ and define $(w,w^\prime)$ as in \eqref{ww map}. Thanks to the fact that $y_0^{\prime,\cdot}=f(y_0)$, together with the assumption that $f$ is bounded (recall that $f\in C^4_b$), relation~\eqref{eq:invariance of ball inequality} can be read as 
\begin{equation}\label{five thirty nine}
\|w,w^\prime\|_{\bz^1,(\beta,\gamma)}\leq C\left(1+\|y,y^\prime\|_{\bz^1,(\beta,\gamma)} \right)^2 
\|\bz\|_{(\alpha,\gamma)} \, \bar{T}^{\alpha-\beta}.
\end{equation}
Moreover, since $\|y-c,y^\prime-c^\prime\|_{\bz^1,(\beta,\gamma)} \le 1$, we easily get 
\[
\|\mathcal{M}_{\bar{T}}\left(y,y^{\prime}\right)\|_{\bz^{1},\left(\beta,\gamma\right);\Delta_{2}^{T}\left(\left[0,\bar{T}\right]\right)}\leq C \|\bz\|_{(\alpha,\gamma)} \,\bar{T}^{\alpha-\beta}.
\]
We now choose $\bar{T}$ satisfying $C\|\bz\|_{(\al,\gamma)}\bar{T}^{\alpha-\beta}=\frac{1}{2}$, and we obtain that 
$(w,w^\prime)$ is an element of $\mathcal{B}_{\bar{T}}$.  Summarizing our considerations so far, we end up with the relation 
\begin{equation}\label{implies rel}
C\|\bz\|_{(\al,\gamma)} \, \bar{T}^{\alpha-\beta}=\frac{1}{2} 
\qquad \implies \qquad 
\mathcal{B}_{\bar{T}} \, \textrm{ is left invariant by } \mathcal{M}_{\bar{T}}.
\end{equation}
Notice the condition on $\bar{T}$  in relation  \eqref{implies rel} does not depend on the initial condition $y_0$.

Next, we will prove that $\mathcal{M}_{\bar{T}}$ is a contraction
on $\hat{\mathscr{D}}_{\bz^{1}}^{(\alpha,\gamma)}(\Delta_{2}^{T}([0,\bar{T}]);V)$,
i.e. we will prove that for two controlled Volterra paths $(y,y^{\prime})$
and $(\tilde{y},\tilde{y}^{\prime})$ in 
$\hat{\mathscr{D}}_{\bz^{1}}^{(\beta,\gamma)}(\Delta_{2}^{T}([0,\bar{T}]);V)$
there exists a $q\in\left(0,1\right)$ such that 
\begin{equation}\label{contraction}
\|\mathcal{M}_{\bar{T}}\left(y-\tilde{y},y^{\prime}-\tilde{y}^{\prime}\right)\|_{\bz^{1},\left(\beta,\gamma\right);\Delta_{2}^{T}\left(\left[0,\bar{T}\right]\right)}\leq q\| y-\tilde{y},y^{\prime}-\tilde{y}^{\prime}\|_{\bz^{1},\left(\beta,\gamma\right);\Delta_{2}^{T}\left(\left[0,\bar{T}\right]\right)}.
\end{equation}
Without loss of generality, and with a slight abuse of notation,  we will from now denote by  $\mathscr{D}_{\bz^{1}}^{(\beta,\gamma)}(\Delta_{2}^{T}([0,\bar{T}]);V)$  the space of controlled  Volterra paths starting from the point $y_{0}\in V$. Thus, the two paths $(y,y^{\prime})$ and $(\tilde{y},\tilde{y}^{\prime})\in\mathscr{D}_{\bz^{1}}^{(\beta,\gamma)}(\Delta_{2}^{T}([0,\bar{T}]);V)$ share the same  initial value. 
Since $\mathscr{D}_{\bz^{1}}^{(\beta,\gamma)}(\Delta_{2}^{T}([0,\bar{T}]);V)$
is a linear space, we may define 
\begin{equation}\label{eq:F Fprime}
(F,F^{\prime})
=
\left(f\left(y\right)-f\left(\tilde{y}\right),
f^{\prime}\left(y^{\cdot_{2}}\right)f\left(y^{\cdot_{1}}\right)
-f^{\prime}\left(\tilde{y}^{\cdot_{2}}\right)f\left(\tilde{y}^{\cdot_{1}}\right)
\right),
\end{equation}
where $(F,F^{\prime})$ has to be seen as an element of 
$\hat{\mathscr{D}}_{\bz^{1}}^{(\beta,\gamma)}(\Delta_{2}^{T}([0,\bar{T}]);V)$.
Thus we have 
\begin{equation}\label{eq:MbarT map}
\mathcal{M}_{\bar{T}}(y-\tilde{y},y^\prime-\tilde{y}^\prime)_t^{\tau} = \int_0^t k(\tau,r)F_r^r dx_r,
\end{equation}
where we observe that the initial condition is now $0$. In order to bound the right hand side of \eqref{eq:MbarT map} we now apply Theorem \ref{thm:Volterra integral of controlled path is controlled path} (in particular equation \eqref{eq:bound volterra ointegration}), which yields
\begin{multline}\label{eq:contraction first ineq}
\|\mathcal{M}_{\bar{T}}\left(y-\tilde{y},y^{\prime}-\tilde{y}^{\prime}\right)\|_{\bz^{1}, \left(\beta,\gamma\right)}\leq  
\|F\|_{\left(\beta,\gamma\right)}+\|F^{\prime}\|_{\infty}\|\bz^{2}\|_{(2\rho+\gamma,\gamma)}\bar{T}^{2(\alpha-\beta)}
\\+
C\|F,F^{\prime}\|_{\bz^{1},\left(\beta,\gamma\right)}
\left(\|\bz^{1}\|_{(\alpha,\gamma)} +\|\bz^{2}\|_{(2\rho+\gamma,\gamma)}\right) \bar{T}^{3\alpha-\gamma-2\beta},
\end{multline}
where we have used that $\rho=\alpha-\gamma$. In \eqref{eq:contraction first ineq} notice that the quantity $\|F\|_{(\beta,\gamma)}$ comes from the term $\|y^{\prime,\cdot_1,\cdot_2}\|_{(\alpha,\gamma)}$ in the definition \eqref{eq:controlled norm def} of the norm $\|y,y^\prime\|_{\bz^1;(\alpha,\gamma)}$, together with the fact that 
\begin{equation}\label{eq:MbarT prime map}
\left[\mathcal{M}_{\bar{T}}(y-\tilde{y},y^\prime-\tilde{y}^\prime)\right]^{\prime,\cdot_1,\cdot_2} = F^{\cdot_2}.
\end{equation}
Also observe that the other terms in the right hand side of \eqref{eq:contraction first ineq} correspond to the evaluation of the remainder for $\mathcal{M}_{\bar{T}} (y-\tilde{y},y^\prime-\tilde{y}^\prime)$, which is obtained by invoking relation~\eqref{eq:bound volterra ointegration}.

Let us now describe how to  get the contraction term $\bar{T}^{\alpha-\beta}$ in front of the $\|F\|_{(\beta,\gamma)}$ term in \eqref{eq:contraction first ineq}. Indeed, even though we consider $(y,y^\prime)$ and $(\tilde{y},\tilde{y}^\prime)$ as elements of $\hat{\mathscr{D}}^{(\beta,\gamma)}_{\bz^1}$, our decomposition \eqref{controlled rel} reveals that their H\"older regularity is dictated by  $\bz^1$ (see also Remark \ref{rem: inherited reg} for a similar observation). Therefore using the expression \eqref{eq:F Fprime} for $F$ and arguments similar to Proposition \ref{prop: composition with reg function}, we get 
\begin{align}\label{eq:contraction second ineq}
\|F^{\cdot_2}\|_{(\beta,\gamma)}
\leq 
C\|y-\tilde{y}\|_{(\beta,\gamma)}\|\bz^{1}\|_{(\alpha,\gamma)}\bar{T}^{\alpha-\beta}
\end{align}
Combining (\ref{eq:contraction first ineq}) and (\ref{eq:contraction second ineq}) we can see that 
\begin{equation}\label{contraction F,F}
\|\mathcal{M}_{\bar{T}}\left(y-\tilde{y},y^{\prime}-\tilde{y}^{\prime}\right)\|_{\bz^{1}, \left(\beta,\gamma\right)}
\leq C_{M,\alpha,\beta,\gamma}\left[\|y-\tilde{y}\|_{(\beta,\gamma)}+\|F,F^{\prime}\|_{\bz^{1},\left(\beta,\gamma\right)}\right]\bar{T}^{\alpha-\beta}. 
\end{equation}
The dependence on $\bar{T} $ on the left hand side will later allow us to use this parameter to create a constant $q\in (0,1)$  such that (\ref{contraction}) holds, similar to the argument for the invariance property of the unit ball.  Next we will prove that 
\begin{equation}\label{eq:F Fprime bound}
\|F,F^{\prime}\|_{\bz^{1},\left(\beta,\gamma\right)}\lesssim \|y-\tilde{y},y^{\prime}-\tilde{y}^{\prime}\|_{\bz^{1},(\alpha,\gamma)}. 
\end{equation} 

We will mainly focus on the term $\|F^{\prime,\cdot_1,\cdot_2}\|_{(\beta,\gamma)}$ , the remainder $R^F$ being treated similarly. 
Now recall from \eqref{eq:F Fprime} that $F^{\prime,\cdot_1,\cdot_2}(y,y^\prime)=f^{\prime}\left(y^{\cdot_2}\right)f\left(y^{\cdot_1}\right)-f^{\prime}\left(\tilde{y}^{\cdot_2}\right)f\left(\tilde{y}^{\cdot_1}\right)$. To be able to treat the fact that we have two upper variables to take care of, we do a simple addition and subtraction to see that 
\begin{equation}\label{mid add sub}
F^{\prime,\cdot_1,\cdot_2}(y,y^\prime)=\left(f^\prime(y^{\cdot_2})-f^\prime(\tilde{y}^{\cdot_2})\right)f(y^{\cdot_1})+f^\prime(\tilde{y}^{\cdot_2})\left(f(y^{\cdot_1})-f(\tilde{y}^{\cdot_1})\right).
\end{equation}
By invoking the fact that $f\in C^4_b$, let $g$ and $ h$ denote the remainders from a first order Taylor expansion of the differences $f(y)-f(\tilde{y})$ and $f^\prime(y)-f^\prime(\tilde{y})$.  Note in particular that this implies that $g\in C^3_b$ and $h\in C^2_b$, and we have that $\|g\|_{C^3_b}\vee \|h\|_{C^2_b} \leq  \|f\|_{C^4_b}$. Then it follows from~\eqref{mid add sub} that for any $t\in [0,\bar{T}]$ we have 
 \begin{align}\nonumber
 F^{\prime,\cdot_1,\cdot_2}(y,y^\prime)_t&=g(y_t,\tilde{y}_t)\left(y^{\cdot_2}_t-\tilde{y}^{\cdot_2}_t\right)f(y^{\cdot_1}_t)+f^\prime(\tilde{y}^{\cdot_2}_t)h(y^{\cdot_1}_t,\tilde{y}^{\cdot_1}_t)\left(y^{\cdot_1}_t-\tilde{y}^{\cdot_1}_t\right)
 \\\label{I1 and I2}
 &=:I_t^{1,\cdot_1,\cdot_2}+I_t^{2,\cdot_1,\cdot_2}.
 \end{align}
 Let us now consider the increment $I_{ts}^{1,\tau,\tau}$. By  elementary addition of subtraction of terms coming from $g$ and $f$, we obtain that 
 \begin{equation}
 |I_{ts}^{1,\tau,\tau}|\leq C_{\|f\|_{C^3_b}} \|y-\tilde{y}\|_{(\beta,\gamma),1}\left(|\tau-t|^{-\gamma}|t-s|^\beta \wedge |\tau-s|^{\beta-\gamma}\right),
\end{equation} 
  from which it follows that $\|I^1\|_{(\beta,\gamma),1}<\infty$. A similar argument can be used to show that also $\|I^2\|_{(\beta,\gamma),1}<\infty$, however in this case we get dependence on the norm $\|f\|_{C^4_b}$ in the bounding constant. Putting the two terms together, and invoking the relation in \eqref{I1 and I2}, we observe that 
  \begin{equation}
  \|F^{\prime,\cdot_1,\cdot_2}\|_{(\beta,\gamma),1} \lesssim \|y-\tilde{y},y^\prime-\tilde{y}^\prime\|^2_{\bz^1,(\beta,\gamma)} \lesssim \|y-\tilde{y},y^\prime-\tilde{y}^\prime\|_{\bz^1,(\beta,\gamma)},
  \end{equation}
where we have invoked the fact that $(y,y^\prime),(\tilde{y},\tilde{y}^\prime)\in \mathcal{B}_{\bar{T}}$ for the second inequality. The quantity $\|F^{\prime,\cdot_1,\cdot_2}\|_{(\beta,\gamma),1,2}$ can be bounded using a similar argument, and we leave this component for the patient reader, for conciseness of the proof. It follows that 
\begin{equation}
  \|F^{\prime,\cdot_1,\cdot_2}\|_{(\beta,\gamma)}  \lesssim \|y-\tilde{y},y^\prime-\tilde{y}^\prime\|_{\bz^1,(\beta,\gamma)},
\end{equation}
 and our claim~\eqref{eq:F Fprime bound} is now proved. 

In conclusion of this step, we are ready to state the desired contraction property on a small interval $[0,\bar{T}]$. Indeed, plugging \eqref{eq:F Fprime bound} into \eqref{contraction F,F} we obtain 
 \begin{equation}\nonumber
 \|\mathcal{M}_{\bar{T}}\left(y-\tilde{y},y^{\prime}-\tilde{y}^{\prime}\right)\|_{\bz^{1}, \left(\beta,\gamma\right)}
\leq C\|y-\tilde{y},y^{\prime}-\tilde{y}^{\prime}\|_{\bz^{1},\left(\beta,\gamma\right)}\bar{T}^{\alpha-\beta}.
 \end{equation}
By choosing $\bar{T}$ small enough,  it is clear that there exists a $q\in(0,1)$  such that (\ref{contraction}) holds. It follows that $\mathcal{M}_{\bar{T}}$ 
admits fixed point in $ \hat{\mathscr{D}}_{\bz^{1}}^{(\beta,\gamma)}(\Delta_{2}^{T}([0,\bar{T}]);V)$, 
and thus existence and uniqueness of Equation (\ref{volterra eq in thm}) on $\Delta_{2}^{T}([0,\bar{T}])$ is established.
 Next we want to extend the solution to all of $\Delta_{2}$, 
 which we do by constructing a solution on all intervals of length $ \bar{T}$.
  That is,  we construct a solution to (\ref{volterra eq in thm}) 
  on $\Delta_{2}^{T}\left([\bar{T},2\bar{T}]\right)$ using the terminal value of the solution created on $\Delta_{2}^{T}([0,\bar{T}])$.   Note that for any
   $(t,\tau)\in \Delta_{2}^{T}([k\bar{T},(k+1)\bar{T}])\subset \Delta_{2}$  for some $k\geq 1$ 
   we formally have that 
\begin{equation}\nonumber
y_{t}^{\tau}=y_{k\bar{T}}^{\tau}+\int_{k\bar{T}}^{t}k(\tau,r)f(y_{r}^{r})dx_{r}. 
\end{equation}
It follows, similarly as in the classical results on existence and uniqueness of SDEs, that there exists a solution on all subintervals of length $\bar{T}$, 
i.e. all intervals  $[a,a+\bar{T}]\subset [0,T]$ for some $a\geq 0$. All these solutions are connected on the boundaries, and thus we use that a function which is H\"older on any subinterval $[a,a\bar{T}]\subset [0,T]$  
of length $\bar{T}$ is also H\"older continuous on $[0,T]$ (see e.g. \cite{FriHai}, exercise 4.24), which applies to the H\"older continuity in both variables.
Here notice that the time step $\bar{T}$ can be made constant thanks to the fact that $f$ is a bounded function (see relation \eqref{five thirty nine}). 
 
 We can conclude that there exists a unique global  solution to Equation (\ref{volterra eq in thm}) in the space $\hat{\mathscr{D}}_{\bz^{1}}^{(\beta,\gamma)}(\Delta_{2};V)$ for $\beta<\alpha$. 
Actually, by (\ref{inherit reg of controled path}) it is clear that the solution inherits the regularity of the controlling noise, and thus, the solution is in $\hat{\mathscr{D}}_{\bz^{1}}^{(\alpha,\gamma)}(\Delta_{2};V)$.
\end{proof}

\begin{rem}\label{rmk:comparison-with-promel}
We would like to point out that the existence and uniqueness of Equation~(\ref{volterra eq in thm}) requires one more degree of regularity on the diffusion coefficient $f$ than what is standard for regular Rough differential equations (see e.g.  \cite{FriHai} section 8). This higher regularity requirement comes from the fact that we need control of the H\"older regularity of the upper argument when composing a function with a controlled Volterra path, as seen in (\ref{upper arg Rf(y)}). This is in contrast to \cite{TrabsPromel} where the authors only need a $C^{3}_{b}$ diffusion coefficients. However, \cite{TrabsPromel} is restricted to the case of a coefficient $f$ such that $f(0)=0$ and to Volterra equations with kernels which can be written as $k(t,s)=k(t-s)$.
\end{rem}

\begin{rem}
Although Equation (\ref{volterra eq in thm}) is a two parameter object, we can study the solution on the diagonal of $\Delta_{2}$ to obtain the classical type of one parameter Volterra equations. The H\"older continuity on the diagonal is already guaranteed by the H\"older topologies used on the space of controlled paths. In particular, there exists a unique solution to the equation
\begin{equation}\nonumber
y_{t}\equiv y_{t}^{t}=y_{0}+\int_{0}^{t}k(t,r)f(y_{r}^{r})dx_{r},\,\,y_{0}\in V.
\end{equation}
One can easily check that $t\mapsto y_{t}\in\mathcal{C}^{\rho} $ for $\rho=\alpha-\gamma$, where $\mathcal{C}^{\rho}$ denotes the classical H\"older spaces of order $\rho$. 
\end{rem}

\subsection{Discussion}
Theorem \ref{existence and uniq} tells us that for any $T>0$ there exists a solution to Equation~\eqref{volterra eq in thm} on $[0,T]$ for any singular Volterra kernel satisfying ($\mathbf{H}$) (in particular, as mentioned in Remark~\ref{rmk:comparison-with-promel}, we do not require a convolutional type of kernel like in~\cite{TrabsPromel}). 
Furthermore, since the extension developed here is fully based on the framework of classical rough path, one can also construct solutions to equations driven 
by lower regularity noise 
(i.e. with $\rho=\alpha-\gamma$ positive but lower than $1/3$). In fact, let $n$ be the whole number part of $1/\rho$. One can extend  Definition \ref{Volterra controlled 
path} to any regularity $\alpha$ by considering a formal expansion of a path to degree $n$ such that the
 $j$-th Volterra-Gubinelli derivative is  convoluted with the $(j+1)$-th term in the
  Volterra rough path, namely
\begin{equation}\nonumber
y_{ts}^{\tau}=\sum_{j=1}^{n-1} \bz^{j,\tau}_{ts}\ast y_{s}^{j,\tau,\cdot_{j},...\cdot_{1}}+R_{ts}^{\tau},
\end{equation}  
where $R\in \mathcal{V}^{(n-1)\rho+\gamma,\gamma}_{2}$, and each derivative
 $y^{j}\in \mathcal{V}^{(\alpha,\gamma)}$ for $j=1,\ldots,n-1$. We will perform this construction more explicitly in the forthcoming paper \cite{HTW}.


\end{document}